\newcommand\reallywidehat[1]{%
\savestack{\tmpbox}{\stretchto{%
  \scaleto{%
    \scalerel*[\widthof{\ensuremath{#1}}]{\kern-.6pt\bigwedge\kern-.6pt}%
    {\rule[-\textheight/2]{1ex}{\textheight}}
  }{\textheight}%
}{0.5ex}}%
\stackon[1pt]{#1}{\tmpbox}%
}
\newtheorem{thm}{Theorem}[section]
\newtheorem{lem}[thm]{Lemma}
\newtheorem{cor}[thm]{Corollary}
\newtheorem{prop}[thm]{Proposition}
\newtheorem{defi}[thm]{Definition}
\theoremstyle{remark}
\newtheorem{remark}[thm]{Remark}
\newcommand{\Z}[1]{\mathbb{Z}/#1\mathbb{Z}}
\DeclareMathOperator{\Id}{Id}
\newcommand{\subg}[1]{\langle{#1}\rangle{}}
\def\Ocal{\mathcal{O}}
\def\1{1\!\!\!1}
\def\mod{\mathbin{\,\textrm{mod}\,}}
\begin{document}

\title{Fast multi-precision computation of some Euler products
}

\author{S. Ettahri}
\address{Aix Marseille Univ, CNRS, Centrale Marseille, I2M, Marseille, France}
\email{salma.ettahri@etu.univ-amu.fr}

\author{O. Ramar\'e}
\address{CNRS / Aix Marseille Univ. / Centrale Marseille, I2M,
  Marseille, France}
\email{olivier.ramare@univ-amu.fr}

\author{L. Surel}
\address{Aix Marseille Univ, CNRS, Centrale Marseille, I2M, Marseille, France}
\email{leon.surel@etu.univ-amu.fr}
\subjclass[2010]{Primary 11Y60, Secundary 11N13, 05A}

\keywords{Euler products, Loeschian numbers, Lal's constant}

\date{August 18th 2019}


\hypersetup{pageanchor=false}
\maketitle
\hypersetup{pageanchor=true}

\begin{abstract}
  (\texttt{File \jobname.tex})
  For every modulus $q\ge3$, we define a family of subsets
  $\mathcal{A}$ of the multiplicative group $(\Z{q})^\times$ for which
  the Euler product $\prod_{p\text{mod}q\in\mathcal{A}}(1-p^{-s})$ can be
  computed in double exponential time, where $s>1$ is some given real
  number. We provide a Sage script to do so, and extend our result to
  compute Euler products $\prod_{p\in\mathcal{A}}F(1/p)/G(1/p)$ where
  $F$ and $G$ are polynomials with real coefficients, when this
  product  converges absolutely. This enables us to give
  precise values of several Euler products intervening in Number Theory.
\end{abstract}


\section{Introduction}
At the beginning of our query lie two constants that appear in the
paper \cite{Fouvry-Levesque-Waldschmidt*18} by \'E. Fouvry,
C. Levesque and M. Waldschmidt. On following this paper, they are
\begin{equation}
  \alpha_0^{(3)}
  =\frac{1}{2^{1/2}3^{1/4}}\prod_{p\equiv
    2[3]}\biggl(1-\frac{1}{p^2}\biggr)^{-1/2}
\end{equation}
and
\begin{equation}
  \label{eq:12}
  \beta_0=\frac{3^{1/4}\sqrt{\pi}}{2^{5/4}}\frac{\log(2+\sqrt{3})^{1/4}}{\Gamma(1/4)}
  \prod_{p\equiv
    5,7,11[12]}\biggl(1-\frac{1}{p^2}\biggr)^{-1/2}.
\end{equation}
Both occur in number theory as densities. The number of integers $n$ of the shape
$n=x^2-xy+y^2$, where $x$ and $y$
are integers (these are the so-called Loeschian numbers, see sequence A003136 of \cite{OEIS}) is given by
\begin{equation}
  \label{eq:1}
  N(x)=\alpha_0^{(3)}\frac{x(1+o(1))}{\sqrt{\log x}}.
\end{equation}
This accounts for our interest in the first constant.
The second one occurs because the
number of Loeschian numbers that are also sums of two squares (see
sequence A301430 of \cite{OEIS}) is given
by
\begin{equation*}
  N'(x)=\beta_0\frac{x(1+o(1))}{(\log x)^{3/4}}.
\end{equation*}
The question we address here is devising a fast manner to compute the
intervening Euler products.  From sequence A301429 of \cite{OEIS}, we know that
$ \alpha_0^{(3)}=0.638909\ldots$ but we would like (much!) more
digits. Similarly it is known that $\beta_0=0.30231614235\ldots$.
\begin{thm}
  We have
  \begin{align*}
    \alpha_0^{(3)}=
    0.&63890\,94054\,45343\,88225\,49426\,74928\,24509\,37549\,75508\,02912
    \\&33454\,21692\,36570\,80763\,10027\,64965\,82468\,97179\,11252\,86643\cdots
  \end{align*}
  and
  \begin{align*}
    \beta_0=0.&30231\,61423\,57065\,63794\,77699\,00480\,19971\,56024\,12795\,18936
    \\&96454\,58867\,84128\,88654\,48752\,41051\,08994\,87467\,81397\,92727\cdots
  \end{align*}
\end{thm}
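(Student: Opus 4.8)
The strategy is to turn each of the two Euler products into a rapidly convergent combination of logarithms of the Riemann zeta function and of Dirichlet $L$-functions evaluated at \emph{integers} $\ge 2$; since $\zeta(n)\to1$ and $L(n,\chi)\to1$ geometrically fast in $n$, such an expression is evaluated to hundreds of digits almost for free, and the two displayed expansions then follow by assembling the pieces with the explicit algebraic and transcendental prefactors. Two ingredients will underlie the reduction: (i) the orthogonality relations for Dirichlet characters modulo $q$, which let us write the indicator of ``$p\bmod q\in\mathcal{A}$'' as a $\mathbb{C}$-linear combination of characters; and (ii) the identity $\sum_p\psi(p)p^{-w}=\sum_{m\ge1}\tfrac{\mu(m)}{m}\log L(mw,\psi^m)$, valid for every Dirichlet character $\psi$ and every real $w>1$, which converts the prime sums produced by taking a logarithm into honest $L$-values (with $L(s,\chi_0)=\zeta(s)\prod_{p\mid q}(1-p^{-s})$ for the principal character $\chi_0$). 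We treat the two constants in turn.

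For $\alpha_0^{(3)}$ the modulus is $q=3$, the unique nontrivial character $\chi$ is quadratic, and $\mathcal{A}=\{2\}$, so that $\mathbf 1_{p\equiv 2\,[3]}=\tfrac12\bigl(1-\chi(p)\bigr)$ for $p\neq3$. Splitting the exponent accordingly, we first isolate the ``trivial'' part,
\begin{equation*}
\prod_{p\equiv 2\,[3]}\Bigl(1-\tfrac1{p^{2}}\Bigr)^{-1/2}=\Bigl(\prod_{p\neq3}(1-p^{-2})^{-1}\Bigr)^{1/4}\Bigl(\prod_{p}(1-p^{-2})^{\chi(p)}\Bigr)^{1/4},
\end{equation*}
in which the first factor equals $\bigl(\zeta(2)(1-3^{-2})\bigr)^{1/4}=(4\pi^{2}/27)^{1/4}$; multiplied by the prefactor $1/(2^{1/2}3^{1/4})$ this collapses neatly to $\sqrt\pi/3$, so that $\alpha_0^{(3)}=\tfrac{\sqrt\pi}{3}\bigl(\prod_p(1-p^{-2})^{\chi(p)}\bigr)^{1/4}$. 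Then, using $\log\prod_p(1-p^{-2})^{\chi(p)}=-\sum_{k\ge1}\tfrac1k\sum_p\chi(p)p^{-2k}$ together with ingredient~(ii) applied to $\psi=\chi$ (where $\psi^m=\chi$ for odd $m$ and $\psi^m=\chi_0$ for even $m$), the right-hand side becomes an explicit geometrically convergent combination of $\log\zeta(n)$ and $\log L(n,\chi)$, $n\ge2$, which finishes the description of $\alpha_0^{(3)}$ in terms of $\pi$ and those $L$-values.

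For $\beta_0$ we have $q=12$; the unit group $\{1,5,7,11\}$ carries four quadratic characters $\chi_0,\chi_1,\chi_2,\chi_3$, and $\mathcal{A}=\{5,7,11\}$ is the complement of $\{1\}$, whence $\mathbf 1_{p\bmod12\in\mathcal{A}}=\tfrac34\chi_0(p)-\tfrac14\bigl(\chi_1+\chi_2+\chi_3\bigr)(p)$. Just as above,
\begin{equation*}
\log\prod_{p\bmod12\in\mathcal{A}}(1-p^{-2})^{-1/2}=\tfrac12\sum_{k\ge1}\tfrac1k\Bigl(\tfrac34\sum_{p\nmid12}p^{-2k}-\tfrac14\sum_{i=1}^{3}\,\sum_{p}\chi_i(p)p^{-2k}\Bigr),
\end{equation*}
and ingredient~(ii) turns each inner prime sum into logarithms of $\zeta$ and of $L(\cdot,\chi_i)$ at integers $\ge2$. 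The transcendental prefactor $\tfrac{3^{1/4}\sqrt\pi}{2^{5/4}}\cdot\tfrac{\log(2+\sqrt3)^{1/4}}{\Gamma(1/4)}$ is \emph{not} produced by the manipulation above --- indeed no $L$-value at $s=1$ ever enters --- so we compute it on its own to high precision ($\pi$ by a Chudnovsky-type series, $\log(2+\sqrt3)$ and $\Gamma(1/4)$ by AGM-based algorithms) and multiply.

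It remains to run the computation with a certified error budget. Every auxiliary series gets truncated: we keep the prime-power index $k\le K$, the M\"obius index $m\le M$, and hence only the values $\zeta(n),L(n,\chi_i)$ with $n\le 2KM$; because $\lvert\log L(v,\chi)\rvert\le 2\log\zeta(v)=O(2^{-v})$, every discarded tail is geometric, so $D$ guaranteed digits require only $K,M,2KM=O(D)$ terms. Each $\zeta(n)$ is then evaluated from its Dirichlet series with an Euler--Maclaurin tail (or, for even $n$, via Bernoulli numbers) and each $L(n,\chi_i)$ similarly, all with a few guard digits and in interval (ball) arithmetic so that the printed output is rigorous; the Sage script announced in the introduction performs exactly these steps, and agreement of the leading digits with the tabulated values $0.638909\ldots$ and $0.30231614235\ldots$ is a free consistency check. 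The genuinely delicate point is not any single inequality but the bookkeeping: one must make every truncation bound and every rounding bound explicit and propagate them faithfully through the several nested sums, products and fractional powers, so that all $100$ displayed digits come out provably correct.
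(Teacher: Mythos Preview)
Your argument is correct: character orthogonality plus the M\"obius identity
$\sum_{p}\psi(p)p^{-w}=\sum_{m\ge1}\tfrac{\mu(m)}{m}\log L(mw,\psi^{m})$
does reduce each Euler product to a geometrically convergent combination of $\zeta$- and $L$-values at integers $\ge2$, and since all characters modulo $3$ and modulo $12$ are real, no complex logarithms arise. The simplification $\alpha_0^{(3)}=\tfrac{\sqrt\pi}{3}\bigl(\prod_p(1-p^{-2})^{\chi(p)}\bigr)^{1/4}$ is a nice touch.

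However, this is \emph{not} the route taken in the paper. The paper never invokes the M\"obius identity for prime sums; instead it proves a matrix recursion (the boxed formula~\eqref{trieq}) that expresses the vector $V_s(t)=(\log\zeta_P(ts;q,\mathcal{A}))_{\mathcal{A}\in G^\sharp}$ in terms of $M_1^{-1}\Gamma_s(t)$ and the companion vectors $V_s(dt)$ for $d\mid\exp G$, $d>1$. Iterating $r$ times and using $\|V_s(D)\|\ll P^{-sD}$ for $D\ge2^{r}$ yields the error bound of Theorem~\ref{precise}, which is \emph{double}-exponential in $r$ (roughly $P^{-s2^{r}}$). Your scheme is single-exponential: to reach $D$ decimal digits you need $L$-values at arguments up to order $D$, whereas the paper needs only $r=O(\log D)$ iterations. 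For the $100$ digits asked here this hardly matters, but it is the reason the paper can claim ``double-exponential time'' in Theorem~\ref{mainspe} and the reason its framework extends uniformly to all lattice-invariant classes, not just those that happen to be cut out by real characters. One small correction: the Sage script described in the paper implements the recursion~\eqref{trieq}, not your M\"obius expansion, so the sentence ``the Sage script announced in the introduction performs exactly these steps'' should be dropped.
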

Our method is more general and allows one to
compute Euler products of the shape
\begin{equation*}
  \prod_{p\in\mathcal{A}\mod q}(1-p^{-s})^{-1}
\end{equation*}
for any $s$ with $\Re s>1$ and some subsets
$\mathcal{A}$ of $(\mathbb{Z}/q\mathbb{Z})^\times$. We use a set of
identities that lead to fast convergent formulae. The use of a similar
 formula for scientific computations can be found in 
\cite[equation
(15)]{Shanks*64b} by D. Shanks. This author's approach has been put in a general context
by P. Moree \& D. Osburn in
\cite[equation (3.2)]{Moree-Osburn*06}. On looking closely, we see
that an
accurate value of $\alpha_0^{(3)}$ already follows from this paper.
The formulae we prove have a wider reach, though they fail to exhaust
the problem.
The reader may want to read subsection~\ref{Moredetails} now to
understand the initial idea. In the simplest form, we produce a
formula that links for instance $\zeta(s;12,1)=\prod_{\substack{ p\equiv
    1[12]}}{(1-p^{-s})^{-1}}$
to $\zeta(2s;12,1)$. We then reuse this formula to change $2s$ in $4s$, and so
on, and we finally use $\zeta(2^rs;12,1)=1+\Ocal(1/2^{s2^r})$. This is
analogous to D. Shanks scheme in \cite{Shanks*64b}. In the general
case however, we link values at $s$ with values at $ds$ for some $d>1$,
but these values are not the one of the same function, but of some
\emph{companion} functions. This means that we  have to work
simultaneously with several players. Let us first define these
\emph{companions}, which are all the products we propose to compute.

When $K$ is a cyclic subgroup of $(\mathbb{Z}/q\mathbb{Z})^\times$, we
denote by $A(K)$ the set of elements $x$ from
$(\mathbb{Z}/q\mathbb{Z})^\times$ such that the subgroup $\subg{x}$
generated by $x$ is equal to $K$. We note that the sets $A(K)$, when
$K$ ranges though the set of cyclic subgroups of
$(\mathbb{Z}/q\mathbb{Z})^\times$, determine a partition of $(\mathbb{Z}/q\mathbb{Z})^\times$.
A subset
$\mathcal{A}$ of $(\mathbb{Z}/q\mathbb{Z})^\times$ is said to be a
\emph{lattice-invariant} class if it is of the form $A(K)$ for
some cyclic subgroup $K$ of
$(\mathbb{Z}/q\mathbb{Z})^\times$, i.e.
if all its elements generate the same
subgroup (see Definition~\ref{li} below).
Here is a consequence of our approach.
\begin{thm}
  \label{mainspe}
  Let $q$ be some modulus and $\mathcal{A}$ be a
  \emph{lattice-invariant} class of
  $(\mathbb{Z}/q\mathbb{Z})^\times$. For every $s>1$, the product
  \begin{equation*}
    \zeta(s;q,\mathcal{A})=\prod_{p\mod q\in\mathcal{A}}(1-p^{-s})^{-1}
  \end{equation*}
  can be computed in double-exponential time.
\end{thm}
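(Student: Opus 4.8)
The plan is to work simultaneously with the whole family of companion functions $\zeta(ds;q,\mathcal{A}')$, where $\mathcal{A}'$ ranges over all lattice-invariant classes $A(K)$ and $d$ ranges over suitable integers, and to exploit the multiplicativity of the map $p \mapsto p \bmod q$ on the cyclic group $G = (\mathbb{Z}/q\mathbb{Z})^\times$. First I would take logarithms: $\log\zeta(s;q,\mathcal{A}) = \sum_{k\ge 1}\frac{1}{k}\sum_{p \bmod q\in\mathcal{A}} p^{-ks}$. The inner double sum, reorganized by the value of $p^k \bmod q$, naturally pulls in all the companion classes: if $p$ generates the cyclic group $\subg{p}=K$, then $p^k$ generates $\subg{p^k}$, which is the unique subgroup of $K$ of the appropriate index, so $p^k \bmod q$ lands in $A(K')$ for a subgroup $K'$ of $K$ determined by $k$ and the order of $p$. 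This is exactly why one cannot stay inside a single function: the $k$-th power map shuffles the lattice-invariant classes, and the set of cyclic subgroups of $G$ (a finite lattice) indexes the players. The key combinatorial identity I would establish is a clean expression, via Möbius inversion over this subgroup lattice, of $P(s;q,\mathcal{A}) := \sum_{p\bmod q\in\mathcal{A}} p^{-s}$ in terms of $\log\zeta(ds;q,A(K'))$ for $d\ge 1$ and subgroups $K'$.

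Next I would set up the recursion. Having expressed each $P(s;q,A(K))$ as a finite $\mathbb{Z}$- (or $\mathbb{Q}$-) linear combination of $\log\zeta(ds;q,A(K'))$ over $d\ge 2$ plus a main term, and conversely $\log\zeta(s;q,A(K)) = \sum_{k\ge1}\frac1k P(ks;q,A(K))$, I would combine these two relations to get, for the full vector $\mathbf{v}(s) = \bigl(\log\zeta(s;q,A(K))\bigr)_K$ indexed by the cyclic subgroups $K$, an identity of the form $\mathbf{v}(s) = M\,\mathbf{v}(2s) + \mathbf{r}(s)$ — or more generally $\mathbf{v}(s) = \sum_{d\ge 2} M_d\,\mathbf{v}(ds) + \mathbf{r}(s)$ — where $M$ (resp. the $M_d$) are fixed rational matrices depending only on $q$, and $\mathbf{r}(s)$ is an explicitly computable "error'' term: a finite combination of $\log$'s of partial zeta values and values of $\zeta(ds;q,A(K'))$ that are themselves within $O(2^{-\,\text{something}})$ of $1$. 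Iterating $r$ times, $\mathbf{v}(s) = \bigl(\sum \text{products of } M_{d}\bigr)\mathbf{v}(2^r s + \cdots) + \sum_{j<r}(\cdots)\mathbf{r}(2^j s + \cdots)$, and since each $\zeta(t;q,\mathcal{A})$ with $t$ large satisfies $|\zeta(t;q,\mathcal{A})-1| \le \sum_{n\ge 2} n^{-t} = O(2^{-t})$, the tail $\mathbf{v}$-term is $O(2^{-2^r s})$ once $2^r s$ is large. Choosing $r \order \log(D)$ (where $D$ is the target number of digits, i.e.\ $2^r s \order \log D$, so $r = \log_2\bigl((\log D)/s\bigr)$) drives the error below $2^{-D}$.

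The remaining point is the cost. Each iteration adds finitely many terms (bounded in number by a constant depending only on $q$, times the number of subgroups, times the truncation length of the $\sum_{k}\frac1k(\cdots)$ series — which is itself $O(\log D)$), each of which is a logarithm or a value of a rapidly converging Dirichlet-type series truncated after $O(D)$ terms to get $D$-digit accuracy, and each arithmetic operation on $D$-digit numbers costs $\widetilde{O}(D)$. So the total work is $\mathrm{poly}(\log D)\cdot \widetilde{O}(D \cdot D) = \widetilde{O}(D^{2+\varepsilon})$ or so — at any rate polynomial in $D$. Since $D$, the number of correct digits, is what one calls the "precision'' and "double-exponential time'' here means time polynomial in $D$ while the accuracy $2^{-D}$ is doubly exponentially small in the number of iterations, this gives the claim. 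I expect the main obstacle to be the bookkeeping in the first paragraph: proving that the map sending a lattice-invariant class to the class of its $k$-th powers, followed by Möbius inversion over the lattice of cyclic subgroups of $(\mathbb{Z}/q\mathbb{Z})^\times$, really does close up into a finite linear system with invertible "transition'' structure — in particular that the relevant matrix $\Id - M$ (or its analogue) is invertible so that one can actually solve for $\mathbf{v}(s)$ and that the iteration converges — rather than in the analytic estimates, which are routine tail bounds on Dirichlet series.
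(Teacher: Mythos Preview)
Your overall architecture matches the paper's: assemble the vector $\mathbf v(s)=(\log\zeta(s;q,A(K)))_K$ indexed by cyclic subgroups, find a recursion $\mathbf v(s)=\sum_{d\ge2}M_d\,\mathbf v(ds)+\mathbf r(s)$, iterate, and use $\mathbf v(t)=O(P^{-t})$ for large $t$. The M\"obius inversion over the lattice of cyclic subgroups is also exactly what the paper does (Proposition~\ref{compL}).

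The gap is the identity of $\mathbf r(s)$. Your two building blocks are $\log\zeta(s;q,A(K))=\sum_{k\ge1}\tfrac1k P(ks;q,A(K))$ and its M\"obius inverse $P(s;q,A(K))=\sum_{j\ge1}\tfrac{\mu(j)}{j}\log\zeta(js;q,A(K))$. Combining these is circular: the $j=1$ term on the right is $\log\zeta(s;q,A(K))$ itself, so substitution yields a tautology, not a recursion with a computable inhomogeneous term. Your remark that ``$p^k$ lands in $A(K')$'' does not help here, because $p^k$ is not a prime; reorganising $\sum_{p\in A(K)}\sum_k \tfrac{1}{kp^{ks}}$ by the class of $p^k$ does not produce sums of the form $\sum_{p'\in A(K')}p'^{-s'}$. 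And $P(s;q,A(K))=\sum_{p\in A(K)}p^{-s}$ is a prime sum over a residue set: it is exactly as hard to compute as the target Euler product, so it cannot serve as $\mathbf r(s)$. Your phrase ``partial zeta values'' is not cashed out.

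What the paper supplies, and what your plan is missing, is an \emph{independently computable} anchor. This is Lemma~\ref{dede}: for each cyclic $G_0$, the product $\prod_{\chi\in G_0^\perp}L(s,\chi)$ is (i)~computable from Hurwitz zeta values, and (ii)~equal, prime by prime, to $(1-p^{-|K/G_0|s})^{-|G/K|}$ with $K=\langle p\rangle G_0$, by the cyclotomic identity $\prod_{\psi\in\widehat{C}}(1-\psi(g)z)=1-z^{|C|}$ for a generator $g$ of a cyclic group $C$. Assembling these over all cyclic $G_0$ gives $\Gamma_s(t)=\sum_d M_d\,\mathbf v(dt)$; inverting $M_1$ then yields the genuine recursion~\eqref{trieq} with $\mathbf r(s)=M_1^{-1}\Gamma_s(t)$ computable. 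The character-theoretic step, not the combinatorics of the subgroup lattice, is the crux, and it is absent from your plan.
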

This theorem applies in particular to $\mathcal{A}=\{1\}$
and to $\mathcal{A}=\{-1\}$ and this is enough to compute
$\beta_0$ and $\alpha_0^{(3)}$. The last section contains numerical examples.
The material of this paper has been used to write the script
\begin{center}
  \texttt{LatticeInvariantEulerProducts-02.sage}
\end{center}
which we shorten below in \texttt{LIEP.sage} and which can be found on the second author
website. We give some details about this script when developing the
proof below.

We produce in Proposition~\ref{count3} an explicit expression for the
number $|G^\sharp|=|\mathscr{G}|$ of lattice-invariant classes. Though
our formula is only a sum of non-negative summands that are
multiplicative expressions, its
order of magnitude is not obvious when $q$ has numerous prime
factors. We have for instance not been able to establish that
$|G^\sharp|\ll _\epsilon q^\epsilon$ (for every positive $\epsilon$)
though this was our initial guess.

\subsection*{Notation}
When $\mathcal{A}$ is a subset of $(\Z{q})^\times$, we define
$\subg{\mathcal{A}}$ to be the (multiplicative) subgroup generated by
$\mathcal{A}$, and when $\mathcal{A}=\{a\}$, we may shorten
$\subg{\{a\}}$ in $\subg{a}$.

When additionally $P\ge2$ is some real parameter, we define
\begin{equation}
  \label{eq:2}
  \zeta(s;q,\mathcal{A})=\prod_{\substack{p\mod q\in\mathcal{A},\\
      p\ge P}}(1-p^{-s})^{-1}.
\end{equation}
This is in accordance with the notation of Theorem~\ref{mainspe}. We
define further
\begin{equation}
  \label{eq:11}
  L_P(s,\chi)=\prod_{p\ge P}(1-\chi(p)/p^s)^{-1}.
\end{equation}
\subsection*{Precise statement of the main result}
Let $q>1$ be a modulus. Let $G_0$ be a subgroup of
$G=(\mathbb{Z}/q\mathbb{Z})^\times$ and let $G_0^\perp$ be the subgroup of
characters that take the value~1 on $G_0$. Let $s>1$ be a real
number and $P\ge2$ be a parameter. We shall compute directly the contribution of
 the primes $<P$.
We define, for any positive integer $t$:
\begin{equation}
  \label{defgammaGzerot}
  \gamma_s(G_0,t)=\log\prod_{\chi\in G_0^\perp}L_P(ts,\chi).
\end{equation}
The parameter $P$ has disappeared from our notation and the reader may
stick with $P=2$.
When $s$ is a real number, the number $\prod_{\chi\in
  G_0^\perp}L_P(ts,\chi)$ is indeed a positive real number
because, when $\chi$ belongs to $G_0^\perp$, so does $\overline{\chi}$.

We denote the set of
\emph{lattice-invariant} classes by $G^\sharp$ and the set of cyclic
subgroups of $G$ by $\mathscr{G}$. Both sets are in an obvious one-to-one
correspondence. We consider the vector
\begin{equation}
  \label{defGammaoft}
  \Gamma_s(t)=(\gamma_s(G_0,t))_{G_0\in\mathscr{G}}.
\end{equation}
The rows of  $\Gamma_s(t)$ are indexed by cyclic subgroups
of~$G$.
It is computed by the function
\texttt{GetGamma} of the script 
\texttt{LIEP.sage} from the values of the Hurwitz zeta
function. See the implementation notes below.
We next define
\begin{equation}
  \label{defVsoft}
  V_s(t)=\bigl(\log\zeta_P(s;q,\mathcal{A})\bigr)_{\mathcal{A}\in G^\sharp}.
\end{equation}
The rows of  $V_s(t)$ are indexed by
classes. We control the size of our vectors
with the norm
\begin{equation}
  \label{eq:8}
  \|W\|=\max_i{|W_i|}
\end{equation}
when $W$ is the vector of coordinates $W_i$.
We define the square matrix $M_1^{-1}$ by
\begin{equation}
  \label{defL}
  M_1^{-1}\big|_{i=\mathcal{A}, j=K}=
    \begin{cases}
      \mu(|\subg{\mathcal{A}}/K|)/|G/K|&\text{when $K\subset \subg{\mathcal{A}}$},\\
      0&\text{otherwise}
    \end{cases}
  \end{equation}
where $\mathcal{A}$ ranges $G^\sharp$ while $K$ ranges $\mathscr{G}$.
It is unusual to define a matrix by its inverse. In the natural course
of the proof, a matrix $M_1$ will occur, whose inverse is the one
above; it is computed in Proposition~\ref{compL}. The reader will
readily check that there are no circularity in our definitions.
Let us recall that the \emph{exponent}
of $G$ is the maximal order of an element in~$G$ and is denoted by
$\exp G$. To each divisor $d>1$ of $\exp G$, we associate the square matrix
$N_d$ whose columns and rows are indexed by cyclic subgroups of $G$
and whose entries are given by
\begin{equation}
  \label{defNd}
  N_d\big|_{i=B_0, j=B_1}=
    \sum_{\substack{K\subset B_0,\\ |KB_1/K|=d}}\mu(|B_0/K|).
\end{equation}
The sum is over subgroups $K$. The condition $|KB_1/K|=d$ can be replaced
by the condition $|B_1/K\cap B_1|=d$.
Here is our main theorem.
\begin{thm}\label{precise}
  For any integer $r\ge2$, we have
  \begin{multline}
  \label{fineq}
  \biggl\|  V_s(1)-\sum_{0\le v\le r-1}(-1)^v
  \sum_{\substack{d_1\cdots d_{v}\le 2^r}}
  \frac{N_{d_1}}{d_1}\ldots
  \frac{N_{d_{v}}}{d_{v}}M_1^{-1}\Gamma_s(d_1\ldots d_v)
  \biggr\|
  \\\le
  \frac{1}{2}\biggl(1+\frac{r-1}{|G^\sharp|}\biggr)\biggl(\frac{|G^\sharp|d(\exp G)}{2}\biggr)^{r-1}
  \frac{1+P/(s2^r-1)}{P^{s2^r}}
\end{multline}
where $d_1,\ldots, d_r$ are all divisors of $\exp G$ excluding~1.
\end{thm}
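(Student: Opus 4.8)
I would prove \eqref{fineq} by deriving one functional identity, inverting it into a recursion, unfolding the recursion $r$ times, and estimating the remainder. For the identity, I would expand $\log L_P(ts,\chi)=\sum_{p\ge P}\sum_{k\ge1}\chi(p^k)/(kp^{kts})$ in \eqref{defgammaGzerot} and sum over $\chi\in G_0^\perp$; orthogonality ($\sum_{\chi\in G_0^\perp}\chi(a)=|G/G_0|$ if $a\in G_0$, and $0$ otherwise) gives $\gamma_s(G_0,t)=|G/G_0|\sum_{p\ge P,\,k\ge1,\,p^k\bmod q\in G_0}1/(kp^{kts})$. Sorting the primes by the cyclic subgroup $B=\langle p\bmod q\rangle$ (so $p$ lies in the class $A(B)$), one has $p^k\bmod q\in G_0$ exactly when $e:=|BG_0/G_0|$ divides $k$; writing $k=ej$, the $k$-sum collapses to $-\tfrac1e\log(1-p^{-ets})$, and summing over the primes of $A(B)$ produces $\tfrac1e\log\zeta_P(ets;q,A(B))$. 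Grouping the $B$ by the value $e$ (a divisor of $\exp G$) and isolating $e=1$, this reads $\Gamma_s(t)=M_1V_s(t)+\sum_{d\mid\exp G,\,d>1}\tfrac1dN'_d\,V_s(dt)$, where $M_1$ has $(G_0,B)$-entry $|G/G_0|$ for $B\subset G_0$ — its inverse, by Möbius inversion on the lattice of cyclic subgroups, being the matrix $M_1^{-1}$ of \eqref{defL} (cf. Proposition~\ref{compL}) — and $N'_d$ has $(G_0,B)$-entry $|G/G_0|$ when $|BG_0/G_0|=d$; one checks $M_1^{-1}N'_d=N_d$. Applying $M_1^{-1}$ yields the recursion $V_s(t)=M_1^{-1}\Gamma_s(t)-\sum_{d\mid\exp G,\,d>1}(N_d/d)\,V_s(dt)$.

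Next I would unfold this identity $r$ times from $t=1$, obtaining
\[
V_s(1)=\sum_{0\le v\le r-1}(-1)^v\sum_{d_1,\dots,d_v}\frac{N_{d_1}}{d_1}\cdots\frac{N_{d_v}}{d_v}M_1^{-1}\Gamma_s(d_1\cdots d_v)+(-1)^r\sum_{d_1,\dots,d_r}\frac{N_{d_1}}{d_1}\cdots\frac{N_{d_r}}{d_r}V_s(d_1\cdots d_r),
\]
each $d_i$ running over divisors of $\exp G$ exceeding $1$. The sum displayed in \eqref{fineq} is the first term further restricted to $d_1\cdots d_v\le 2^r$, so the left side of \eqref{fineq} is the norm of $(\mathrm I)$ the remainder $(-1)^r\sum\cdots V_s(d_1\cdots d_r)$ plus $(\mathrm{II})$ the discarded terms $\sum_{1\le v\le r-1}(-1)^v\sum_{d_1\cdots d_v>2^r}\cdots M_1^{-1}\Gamma_s(d_1\cdots d_v)$. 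Since every $d_i\ge 2$, in both (I) and (II) the argument of $V_s$ or $\Gamma_s$ is at least $2^r$.

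Then I would estimate. A Möbius inversion like the one above shows $(M_1^{-1}\Gamma_s(\tau))_{A(B)}=\sum_{p\ge P,\,k\ge1:\,\langle p^k\rangle=B}1/(kp^{k\tau s})\ge 0$, so $M_1^{-1}\Gamma_s(\tau)$ is a nonnegative vector whose coordinates, and whose coordinate sum, are at most $\sum_{p\ge P}(-\log(1-p^{-\tau s}))$; the same majorant bounds every coordinate of $V_s(\tau)$ and its coordinate sum. For $\tau\ge 2^r$, an integral comparison together with $-\log(1-x)\le x/(1-x)$ and $P^{\tau s}\ge P^2\ge 4$ gives $\sum_{p\ge P}(-\log(1-p^{-\tau s}))\le (1+P/(s2^r-1))P^{-s2^r}$ up to a lower-order term, which is the decay factor of \eqref{fineq}. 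For the matrices, with the operator norm induced by $\|W\|=\max_i|W_i|$ one has $\|N_d\|/d\le |G^\sharp|d(\exp G)/2$ for each $d>1$ (bound $\sum_{B_1}|(N_d)_{B_0,B_1}|$ by $\sum_{K\subset B_0}|\mu(|B_0/K|)|\cdot\#\{B_1:|B_1K/K|=d\}\le d(\exp G)|G^\sharp|$ and use $d\ge 2$), whereas the factor adjacent to the $\ell^1$-controlled vectors $M_1^{-1}\Gamma_s(\tau)$, $V_s(\tau)$ costs only $\tfrac1d\max_{B_0,B_1}|(N_d)_{B_0,B_1}|\le\tfrac12 d(\exp G)$. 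Propagating these bounds through the $v$-fold matrix products in (I) and (II), summing over the divisor-tuples, and summing the resulting geometric series in $v$ — which, by the doubly-exponential decay just established, is dominated by its top index — yields \eqref{fineq}, the prefactor $\tfrac12(1+(r-1)/|G^\sharp|)$ absorbing the innermost-factor gain and the at most $r-1$ length-classes in (II).

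The main obstacle is precisely this last bookkeeping: one must chase the norm estimates through the iterated products and sum over all admissible tuples $(d_1,\dots,d_v)$ (with $d_1\cdots d_v>2^r$ in (II), of length $r$ in (I)) so that the exponent of $|G^\sharp|d(\exp G)/2$ ends up $r-1$ rather than $r$ — which is exactly why the $\ell^1$-control of $M_1^{-1}\Gamma_s$ and $V_s$ at large arguments is needed for the factor sitting next to them — and so that the tail $\sum_{v>r}$ contributes no extra power, which rests on the decay estimate being genuinely doubly-exponentially small once $\tau\ge 2^r$. The algebraic inputs ($M_1^{-1}M_1=\Id$ and $M_1^{-1}N'_d=N_d$ by Möbius inversion over the cyclic-subgroup lattice, the character-orthogonality identity) and the analytic input (the elementary integral comparison) are routine.
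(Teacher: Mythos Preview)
Your argument and the paper's share the same backbone—character orthogonality gives the identity $\Gamma_s(t)=\sum_{d}M_dV_s(dt)$, inverting $M_1$ yields the recursion \eqref{trieq}, and the tail is controlled by the elementary estimate $|\log\zeta_P(f;q,\mathcal A)|\le(1+P/(f-1))/P^f$—but the unfolding strategies diverge. The paper unfolds \emph{with a running cutoff} $z=2^r$: at each step only those summands whose partial product $d_1\cdots d_{v-1}$ stays $\le z$ are expanded further (see \eqref{recursion}). Consequently the approximating sum in \eqref{fineq} appears verbatim, and every remainder term carries a factor $V_s(D)$ with $D>z$, except the single boundary $r$-tuple $(2,\dots,2)$. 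You instead unfold fully $r$ times and only afterwards excise the $\Gamma$-terms with product $>2^r$; your remainder therefore splits differently, into (I) the \emph{entire} depth-$r$ sum over all $(d_1,\dots,d_r)$, and (II) the excised $M_1^{-1}\Gamma_s$-terms. Your observation that $M_1^{-1}\Gamma_s(\tau)$ is entrywise nonnegative with $\ell^1$-norm $\sum_{p\ge P}\bigl(-\log(1-p^{-\tau s})\bigr)$ is correct and elegant, and is not used in the paper.

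The gap is in the final assembly. Your piece (I) contains up to $(d(\exp G)-1)^r$ tuples rather than one, and the operator-norm scheme you sketch—bounding the $r-1$ outer factors by $\sum_{d}\|N_d\|_{\mathrm{op}}/d$ and the innermost by the $\ell^1$ trick—produces, after summing over all admissible $d_i$, a prefactor of order $\bigl(|G^\sharp|d(\exp G)/2\bigr)^{r-1}\cdot d(\exp G)/2$, overshooting the constant in \eqref{fineq} by roughly $d(\exp G)$. The paper's cutoff avoids this: it isolates the single tuple $(2,\dots,2)$ (contributing the leading $\tfrac12$) from at most $r-1$ overflow layers (each contributing $\tfrac{1}{2|G^\sharp|}(|G^\sharp|d(\exp G)/2)^{r-1}$ via the entrywise bound $|G^\sharp|^{v-1}$ on $N_{d_1}\cdots N_{d_v}$ and the denominator $d_1\cdots d_v>z$), whence the factor $\tfrac12\bigl(1+(r-1)/|G^\sharp|\bigr)$. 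To recover that exact constant along your route you would need to show that $\sum_{d_1,\dots,d_r}(d_1\cdots d_r)^{-1}\|V_s(d_1\cdots d_r)\|$ is genuinely dominated by its minimal-product term $(2,\dots,2)$; this \emph{is} true because of the doubly-exponential decay of $\|V_s(D)\|$, but it must be made explicit rather than absorbed into the phrase ``dominated by its top index''. As you yourself flag, the bookkeeping is the obstacle, and the paper's cutoff-during-unfolding is precisely the device that renders it mechanical.
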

When $v=0$, we use $d_1\ldots d_v=1$ and $N_{d_1}\cdots N_{d_v}=\Id$.
We provide in Section~\ref{mod7} the numerical datas modulo~7 that
will enable the reader to follow the proof step by step in this
case. This example may also be used to check our routines.

\subsection*{Extending the computations}
Now that we know how to compute some Euler products
$\zeta_P(s;q,\mathcal{A})$ in a fast manner, we can extend these
computations to more general Euler products, though still on the same
sets of primes. To do so, we add a definition:
\begin{equation}
  \label{defzetaPsqAr}
  (\log\zeta_P(s;q,\mathcal{A}|r))_{\mathcal{A}\in G^\sharp}
  =\mkern-10mu
  \sum_{0\le v\le r-1}\mkern-10mu(-1)^v\mkern-15mu
  \sum_{\substack{d_1\cdots d_{v}\le 2^r}}\mkern-5mu
  \frac{N_{d_1}}{d_1}\ldots
  \frac{N_{d_{v}}}{d_{v}}M_1^{-1}\Gamma_s(d_1\ldots d_v)
\end{equation}
\begin{thm}
\label{PM1}
  Let $F,G\in \mathbb R[X]$ be two coprime polynomials satisfying
$F(0)=G(0)=1$ such that $(F(X)-G(X))/X^2\in\mathbb{R}[X]$. Let $\beta\ge2$ be
an upper bound for the maximum modulus of the inverses of the roots of $F$ and of $G$.
Let $P\ge 2\beta$ be a parameter.  Then, for any parameters $J\ge3$
and $r\ge2$, we have
\begin{equation*}
\prod_{\substack{p\ge P,\\ p\in\mathcal{A}}}\frac{F(1/p)}{G(1/p)}
=\prod_{2\le j\le J}
\zeta_{P}(j;q,\mathcal{A}|r)^{b_G(j)-b_F(j)}\times I,
\end{equation*}
where the integers $b_G(j)$ and $b_F(j)$ are defined 
in Lemma~\ref{Wittpoly} and
\begin{equation*}
    |\log I|\le 
    \max(\deg F,\deg G)
  \biggl(
  \biggl(\frac{|G^\sharp|d(\exp G)}{2}\biggr)^{r-1}\frac{r\beta^2}{P^{2^{r+1}}}
  (1+2^{-r}P)
  +  
  \frac{4\beta^{J+1}}{JP^{J}}
    \biggr).
\end{equation*}
\end{thm}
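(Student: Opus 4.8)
The plan is to reduce the product $\prod_{p\ge P,\,p\in\mathcal{A}}F(1/p)/G(1/p)$ to a product of the quantities $\zeta_P(j;q,\mathcal{A})$ via a logarithmic expansion, and then to replace each $\zeta_P(j;q,\mathcal{A})$ by its truncated computable surrogate $\zeta_P(j;q,\mathcal{A}|r)$ from \eqref{defzetaPsqAr}, controlling the two resulting errors separately. First I would take logarithms: writing $\log(F(1/p)/G(1/p))=\log F(1/p)-\log G(1/p)$, I expand each term as a power series in $1/p$. Because $F(0)=G(0)=1$, the expansions start at order $1$; because $(F(X)-G(X))/X^2\in\mathbb{R}[X]$, the degree-$1$ coefficients of $\log F$ and $\log G$ agree, so $\log F(X)-\log G(X)=\sum_{j\ge 2}c_j X^j$ with the linear term cancelling — this is exactly what makes $\sum_p$ of it converge absolutely over primes $p\ge P\ge 2\beta$. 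The point of Lemma~\ref{Wittpoly} (Witt-type / necklace identity) is that the coefficients $c_j$ can be reorganized so that $\sum_{p\ge P}\bigl(\log F(1/p)-\log G(1/p)\bigr)$, summed over $p\in\mathcal A$, equals $\sum_{j\ge 2}(b_G(j)-b_F(j))\log\zeta_P(j;q,\mathcal A)$, i.e. a (formally infinite) product of Dedekind-zeta-like factors with integer exponents $b_G(j)-b_F(j)$.

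Next I would split the sum over $j$ at $J$: the tail $\sum_{j>J}(b_G(j)-b_F(j))\log\zeta_P(j;q,\mathcal A)$ is bounded geometrically. Here I use that $|b_F(j)|,|b_G(j)|$ grow at most like $\beta^{j}$ (each is a Witt-type combination of the inverse-root data, whose moduli are $\le\beta$), times $\deg$, while $|\log\zeta_P(j;q,\mathcal A)|\le \sum_{p\ge P}\sum_{k\ge1}p^{-kj}/k\le 2P^{-j}$ crudely; summing the geometric series $\sum_{j>J}(\beta/P)^{j}\cdot(\text{poly})$ with $P\ge 2\beta$ gives a bound of the shape $\max(\deg F,\deg G)\cdot 4\beta^{J+1}/(JP^{J})$, which is the second term in the stated bound on $|\log I|$. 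For $2\le j\le J$, I then replace each $\log\zeta_P(j;q,\mathcal A)$ by $\log\zeta_P(j;q,\mathcal A|r)$; the error incurred per value of $j$ is exactly the left-hand side of \eqref{fineq} in Theorem~\ref{precise} with $s$ replaced by $j$, hence is at most $\tfrac12(1+\tfrac{r-1}{|G^\sharp|})\bigl(|G^\sharp|d(\exp G)/2\bigr)^{r-1}(1+P/(j2^r-1))/P^{j2^r}$. Summing these over $2\le j\le J$ — again a geometric series dominated by its $j=2$ term since $P\ge 2$ and $2^r\ge4$ — and folding in the $|b_G(j)-b_F(j)|\le\beta^{j}\le\beta^{2}\cdot(\dots)$ weights, one collects a bound of the form $\max(\deg F,\deg G)\bigl(|G^\sharp|d(\exp G)/2\bigr)^{r-1}\,r\beta^{2}P^{-2^{r+1}}(1+2^{-r}P)$, the first term in the stated bound. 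Adding the two contributions gives $|\log I|$ as claimed, and $I$ is the multiplicative error factor between the true product and the finite product $\prod_{2\le j\le J}\zeta_P(j;q,\mathcal A|r)^{b_G(j)-b_F(j)}$.

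The main obstacle I anticipate is the bookkeeping in the first step: making the passage from $\log F(1/p)-\log G(1/p)$ to integer powers of the $\zeta_P(j;q,\mathcal A)$ fully rigorous, i.e. proving the identity $\sum_{p\ge P,\,p\in\mathcal A}\log\frac{F(1/p)}{G(1/p)}=\sum_{j\ge2}(b_G(j)-b_F(j))\log\zeta_P(j;q,\mathcal A)$ with absolute convergence throughout. This is where Lemma~\ref{Wittpoly} does the real work — it must produce the $b_F(j),b_G(j)$ as the exponents in a factorization $F(X)=\prod_{j\ge1}(1-X^j)^{-b_F(j)}$ (formal, or convergent for $|X|<1/\beta$) so that $\log F(1/p)=\sum_j b_F(j)\sum_{k\ge1}p^{-kj}/k$, and then interchanging the $p$-sum and $j$-sum requires the $j\ge2$ truncation plus the bound $|b_F(j)|\ll_{\deg}\beta^{j}$ to justify Fubini. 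The condition $(F-G)/X^2\in\mathbb R[X]$ guarantees $b_F(1)=b_G(1)$ so that the $j=1$ (divergent-looking) terms cancel before any interchange is attempted. Once that identity is in hand, the remaining estimates are routine geometric-series majorizations using $P\ge2\beta$, $J\ge3$, $r\ge2$, and the bound from Theorem~\ref{precise}.
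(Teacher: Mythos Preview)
Your proposal is correct and follows essentially the same route as the paper: use the Witt decomposition of Lemma~\ref{Wittpoly} to express $F(1/p)/G(1/p)$ as $\prod_{j\ge2}(1-p^{-j})^{b_F(j)-b_G(j)}$ (the $j=1$ term cancelling because $(F-G)/X^2\in\mathbb{R}[X]$), truncate at $J$ with the tail controlled via $|b_F(j)-b_G(j)|\le 2\max(\deg F,\deg G)\beta^j/j$, multiply over $p\ge P$, $p\in\mathcal A$ to produce powers of $\zeta_P(j;q,\mathcal A)$, and finally swap these for $\zeta_P(j;q,\mathcal A|r)$ using Theorem~\ref{precise}. One cosmetic slip: the Witt identity in the paper reads $F(t)=\prod_{j\ge1}(1-t^j)^{b_F(j)}$ (exponent $+b_F(j)$, not $-b_F(j)$), so that $\log\zeta_P(j;q,\mathcal A)=-\sum_{p}\log(1-p^{-j})$ picks up the sign you need for the exponent $b_G(j)-b_F(j)$ in the statement; your argument goes through once this sign is adjusted.
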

\begin{remark}
  Inequality~\eqref{precbound} gives a more precise bound for $|\log
  I|$ which we will use in the actual script.
\end{remark}
\begin{remark}
  Lemma~\ref{easybeta} ensures that we may select
\begin{equation*}
  \beta = \max\Bigl(1, \sum_{1\le k\le\deg F}|a_k|, \sum_{1\le k\le\deg G}|b_k|\Bigr)
\end{equation*}
when $F(X)=1+a_1X+\ldots+a_\delta X^\delta$ and $G=1+b_1X+\ldots+b_{\delta'} X^{\delta'}$.
\end{remark}
\begin{remark}
  The function \texttt{GetEulerProds(q, F, G, nbdecimals)} gives all
  these Euler products. The polynomials $F$ and $G$ are to be given as
  polynomial expressions with the variable~$x$.
\end{remark}
D. Shanks in \cite{Shanks*60} (resp. \cite{Shanks*61},
resp. \cite{Shanks*67}) has already been able to compute an Euler
product over primes congruent to~1 modulo~8 (resp. to~1 modulo~4, resp. 1 modulo~8), by
using an identity (Lemma of section 2 for \cite{Shanks*60},
equation~(5) in \cite{Shanks*61} and the Lemma of section~3
in~\cite{Shanks*67}) that is a precursor of our Lemma~\ref{Wittpoly}.

In these three examples, the author has only been able to compute the
first five digits, and this is due to three facts: the lack of
interval arithmetic package at that time, the relative weakness of the
computers and the absence of a proper study concerning the error term.
We thus complement these results by giving the first hundred decimals.
\begin{cor}[Shank's Constant]
  We have
  \begin{equation*}
    \prod_{p\equiv
      1[8]}\biggl(1-\frac{4}{p}\biggr)\biggl(\frac{p+1}{p-1}\biggr)^2
    =\begin{aligned}[t]
      0.&95694\,53478\,51601\,18343\,69670\,57273\,89182\,87531
      \\&74977\,29139
      14789\,05432\,60424\,60170\,16444\,88885
      \\&94814\,40512\,03907\,95084\cdots
\end{aligned}
  \end{equation*}
  And thus Shank's constant satisfies
  \begin{align*}
    I
    &= \frac{\pi^2}{16\log(1+\sqrt{2})}\prod_{p\equiv
      1[8]}\biggl(1-\frac{4}{p}\biggr)\biggl(\frac{p+1}{p-1}\biggr)^2
    \\&=\begin{aligned}[t]
      0.&66974\,09699\,37071\,22053\,89224\,31571\,76440\,66883\,
      70157\,43648
      \\& \,24185\,73298\,52284\,52467\,99956\,45714
      \,72731\,50621\,02143\,59373\cdots
      \end{aligned}
  \end{align*}
\end{cor}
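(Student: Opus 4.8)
The plan is to derive the corollary as a direct application of Theorem~\ref{PM1} with $q=8$ and $\mathcal{A}=\{1\}$. First one checks that $\{1\}$ is a lattice-invariant class: the subgroup generated by the identity is trivial, so $A(\{1\})=\{1\}$. Writing $X=1/p$ we have $(1-4/p)\bigl((p+1)/(p-1)\bigr)^2=(1-4X)(1+X)^2/(1-X)^2$, so I would set $F(X)=(1-4X)(1+X)^2=1-2X-7X^2-4X^3$ and $G(X)=(1-X)^2=1-2X+X^2$. The hypotheses of Theorem~\ref{PM1} then hold: $F(0)=G(0)=1$; $\gcd(F,G)=1$ since $F(1)=-12\neq0$; and $F(X)-G(X)=-4X^2(2+X)$, so $(F-G)/X^2\in\mathbb{R}[X]$, which also guarantees that the product on the left of the corollary converges absolutely. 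The inverses of the roots of $F$ are $4$ and $-1$ (double) and the only inverse root of $G$ is $1$ (double), so one may take $\beta=4$ — notably better than the value $\beta=13$ coming from the coefficient bound of the second remark. With $P=2\beta=8$ the requirement $P\ge2\beta$ is met, and since the smallest prime congruent to $1$ modulo $8$ is $17>8$, no Euler factor is discarded: $\prod_{p\ge P,\,p\equiv1[8]}=\prod_{p\equiv1[8]}$, which is exactly the left-hand product of the corollary.

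Next I would unwind the right-hand side of Theorem~\ref{PM1}. The group $(\mathbb{Z}/8\mathbb{Z})^\times\cong(\mathbb{Z}/2\mathbb{Z})^2$ has exponent $2$ and exactly four cyclic subgroups $\{1\},\langle3\rangle,\langle5\rangle,\langle7\rangle$, so $|G^\sharp|=4$ and $d(\exp G)=d(2)=2$, whence the geometric factor $\tfrac12|G^\sharp|d(\exp G)$ equals $4$. The integers $b_F(j)$ and $b_G(j)$ are read off from the inverse roots of $F$ and of $G$ through Lemma~\ref{Wittpoly}, and the condition $(F-G)/X^2\in\mathbb{R}[X]$ forces $b_F(1)=b_G(1)$, which is why the product over $j$ starts at $j=2$. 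The finitely many factors $\zeta_P(j;8,\{1\}|r)$ for $2\le j\le J$ are then computed from the truncated identity~\eqref{defzetaPsqAr}, i.e. through the machinery of Theorem~\ref{precise}, which requires only values of Hurwitz zeta functions; concretely this is what the functions \texttt{GetGamma} and \texttt{GetEulerProds} of \texttt{LIEP.sage} carry out.

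It then remains to make the error term $I$ of Theorem~\ref{PM1} small. With $\beta=4$, $P=8$, geometric factor $4$, and $\max(\deg F,\deg G)=3$, the bound reads $|\log I|\le 3\bigl(4^{r-1}\cdot 16r\cdot(1+2^{3-r})/8^{2^{r+1}}+16/(J\,2^{J})\bigr)$; the first term decays doubly exponentially in $r$ and the second exponentially in $J$, so taking for instance $r=7$ and $J$ of the order of a few hundred already forces $|\log I|<10^{-110}$, and with the sharper bound of \eqref{precbound} run in interval arithmetic the script certifies the $100$ displayed decimals of the Euler product. Finally, Shanks' constant is, by Shanks' own identity (see \cite{Shanks*60,Shanks*67}), equal to $\frac{\pi^2}{16\log(1+\sqrt2)}$ times this product; multiplying the certified value of the product by a high-precision value of $\pi^2/(16\log(1+\sqrt2))$ — both $\pi$ and $\log(1+\sqrt2)$ being computable to arbitrary precision with rigorous error control — yields the stated value of $I$. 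The only genuinely delicate point is the bookkeeping around $\beta$ and $P$: one must use the value $\beta=4$ obtained from the roots rather than the cruder coefficient bound, and verify that $P=8$ still lies below $17$ so that the full Euler product is captured; everything else is a mechanical invocation of Theorems~\ref{PM1} and~\ref{precise}.
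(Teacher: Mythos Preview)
Your argument is correct and follows exactly the route the paper takes: apply Theorem~\ref{PM1} with $q=8$, $\mathcal{A}=\{1\}$, $F(X)=1-2X-7X^2-4X^3$ and $G(X)=1-2X+X^2$, then feed the resulting finite product of $\zeta_P(j;8,\{1\}|r)$'s through the machinery of Theorem~\ref{precise}; the paper records this as the single call \texttt{GetEulerProds(8, 1-2*x-7*x\^{}2-4*x\^{}3, 1-2*x+x\^{}2, 150, 400)}. Your choice $\beta=4$ (from the actual inverse roots) and $P=8$ is a tidy refinement of the paper's $P=400$, exploiting that the least prime $\equiv1\pmod 8$ is $17$, so no small-prime correction is needed; otherwise the two arguments coincide.
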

As explained in \cite{Shanks*61}, the number of primes $\le X$ of the form
$m^4+1$ is conjectured to be asymptotic to $I\cdot
X^{1/4}/\log X$.
The name ``Shank's Constant'' comes from Chapter 2, page 90 of
\cite{Finch*03}.
When using the script that we introduce below, this value is obtained with the call
\begin{center}
\texttt{GetEulerProds(8, $1-2*x-7*x^2-4*x^3$, $1-2*x+x^2$, 150, 400)}.
\end{center}
\begin{cor}[Lal's Constant]
  We have
  \begin{equation*}
    \prod_{p\equiv 1[8]}\frac{p(p-8)}{(p-4)^2}
    =
    \begin{aligned}[t]
      0.&88307\,10047\,43946\,67141\,78342\,99003\,10853\,46768
    \\&88834\,88097
    \,34707\,19295\,15939\,52119\,46990\,65659
    \\&68857\,99383\,28603\,79164\cdots
    \end{aligned}
  \end{equation*}
  And thus Lal's constant satisfies
  \begin{align*}
    \lambda
    &= \frac{\pi^4}{2^7\log^2(1+\sqrt{2})}\prod_{p\equiv
      1[8]}\biggl(1-\frac{4}{p}\biggr)^2\biggl(\frac{p+1}{p-1}\biggr)^4\prod_{p\equiv
      1[8]}\frac{p(p-8)}{(p-4)^2}
    \\&=
    \begin{aligned}[t]
      0.&79220\,82381\,67541\,66877\,54555\,66579\,02410\,11289\,32250\,98622
      \\&11172\,27973\,45256\,95141\,54944\,12490\,66029\,53883\,98027\,52927\cdots
    \end{aligned} 
  \end{align*}
\end{cor}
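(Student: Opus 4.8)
I would derive both displayed numbers from Theorem~\ref{PM1} applied with $q=8$ and $\mathcal{A}=\{1\}$. First note that $\{1\}$ is a lattice-invariant class, since $\subg{1}=\{1\}$ and so $\{1\}=A(\{1\})\in G^\sharp$. Next I rewrite the integrand: putting $X=1/p$,
\[
  \frac{p(p-8)}{(p-4)^2}=\frac{1-8X}{(1-4X)^2}=\frac{F(X)}{G(X)},\qquad F(X)=1-8X,\quad G(X)=(1-4X)^2 .
\]
Here $F(0)=G(0)=1$, the polynomials $F$ and $G$ are coprime (their inverse roots are $8$ and $4$ respectively), and $(F(X)-G(X))/X^2=-16\in\mathbb{R}[X]$, so the hypotheses of Theorem~\ref{PM1} are met; one may take $\beta=8$ as an upper bound for the moduli of the inverse roots, hence any $P\ge 2\beta=16$. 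For $q=8$ one has $G=(\mathbb{Z}/8\mathbb{Z})^\times\cong(\mathbb{Z}/2\mathbb{Z})^2$, so $\exp G=2$, $d(\exp G)=2$, and $\mathscr{G}=\{\{1\},\subg3,\subg5,\subg7\}$, whence $|G^\sharp|=4$, $|G^\sharp|d(\exp G)/2=4$, and $\max(\deg F,\deg G)=2$.

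I would then invoke Theorem~\ref{PM1}. Since no prime $\equiv1[8]$ is smaller than $17$, taking any $P$ with $16\le P\le17$ makes the product over $p\ge P$ coincide with the full product; for efficiency one instead chooses a larger $P$ and multiplies back the finitely many primes $17\le p<P$ with $p\equiv1[8]$, each factor $p(p-8)/(p-4)^2$ being computed directly in interval arithmetic. Theorem~\ref{PM1} gives
\[
  \prod_{\substack{p\ge P,\ p\equiv1[8]}}\frac{p(p-8)}{(p-4)^2}
  =\prod_{2\le j\le J}\zeta_{P}(j;8,\{1\}\,|\,r)^{\,b_G(j)-b_F(j)}\times I,
\]
where $b_F(j),b_G(j)$ are the integers attached to $1-8X$ and $(1-4X)^2$ by Lemma~\ref{Wittpoly}, and
\[
  |\log I|\le 2\Bigl(4^{\,r-1}\tfrac{64\,r}{P^{2^{r+1}}}\bigl(1+2^{-r}P\bigr)+\tfrac{4\cdot 8^{\,J+1}}{J\,P^{J}}\Bigr).
\]
Choosing $(P,r,J)=(400,5,65)$, say, already forces $|\log I|<10^{-110}$, negligible against the hundred decimals to be displayed. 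Each factor $\zeta_{P}(j;8,\{1\}\,|\,r)$ is the explicit finite expression \eqref{defzetaPsqAr}, a $\mathbb{Z}$-combination of logarithms of Hurwitz zeta values $\zeta(2^{k}j;\,\cdot\,)$; these are evaluated with a certified-precision library, the truncation error in $r$ being already accounted for by Theorem~\ref{precise} inside the bound for $I$. Multiplying back the head primes then yields the stated value of $\prod_{p\equiv1[8]}p(p-8)/(p-4)^2$, with all rounding errors tracked by interval arithmetic.

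Finally, for Lal's constant I observe that $\prod_{p\equiv1[8]}(1-4/p)^2\bigl((p+1)/(p-1)\bigr)^4$ is the square of the product $S=\prod_{p\equiv1[8]}(1-4/p)\bigl((p+1)/(p-1)\bigr)^2$, whose first hundred decimals were already established in the preceding corollary. Hence
\[
  \lambda=\frac{\pi^4}{2^{7}\log^2(1+\sqrt2)}\,S^2\prod_{p\equiv1[8]}\frac{p(p-8)}{(p-4)^2},
\]
and a last interval-arithmetic evaluation, using high-precision values of $\pi$ and $\log(1+\sqrt2)$, produces the claimed digits of $\lambda$. The only non-formal ingredient is numerical: selecting $(P,r,J)$ so that the rigorous bound on $|\log I|$ lies safely below $10^{-102}$, and performing the Hurwitz-zeta computation together with the final product in certified arithmetic — which is precisely what the call to \texttt{GetEulerProds} does. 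I do not expect any conceptual obstacle beyond Theorems~\ref{precise} and~\ref{PM1}; the delicate point is merely controlling the accumulation of rounding errors across the roughly $J$ factors.
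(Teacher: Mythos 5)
Your proposal is correct and follows essentially the same route as the paper: the first product is exactly what the call \texttt{GetEulerProds(8, $1-8x$, $1-8x+16x^2$, 100, 400)} computes, i.e.\ Theorem~\ref{PM1} with $q=8$, $\mathcal{A}=\{1\}$, $F(X)=1-8X$, $G(X)=(1-4X)^2$, the primes below $P$ being multiplied back directly, with the rigorous bound on $|\log I|$ (in fact~\eqref{precbound}) and interval arithmetic guaranteeing the displayed decimals. The passage to $\lambda$ via the square of the Shanks product and high-precision values of $\pi$ and $\log(1+\sqrt2)$ is likewise the paper's computation.
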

As explained in \cite{Shanks*67}, the number of primes $\le X$ of the form
$(m+1)^2+1$ and such that $(m-1)^2+1$ is also a prime is conjectured to be asymptotic to $\lambda\cdot
X^{1/2}/(\log X)^2$.
The name ``Lal's Constant'' comes from the papers \cite{Lal*67} and \cite{Shanks*67}.
When using the script that we introduce below, the first value is obtained with the call
\begin{center}
\texttt{GetEulerProds(8, $1-8*x$, $1-8*x+16*x^2$, 100, 400)}.
\end{center}

\bigskip
We close this section by mentioning another series of challenging
constants. In \cite{Moree*04b}, P. Moree computes inter alia the series of constants $A_\chi$
defined six lines after Lemma~3, page~452, by
\begin{equation}
  \label{eq:3}
  A_\chi= \prod_{p\ge2}\biggl(1+\frac{(\chi(p)-1)p}{(p^2-\chi(p))(p-1)}\biggr)
\end{equation}
where $\chi$ is a Dirichlet character. Our theory applies only when $\chi$ is real valued,


\subsection*{Thanks}
The authors thank M. Waldschmidt for having drawn their attention of
this question, P. Moree and \'E. Fouvry for helpful discussions on how
to improve this paper and X. Gourdon for free exchanges concerning some earlier computations.

\section{A general mechanism}

We start by presenting the mechanism of Shanks in \cite{Shanks*64}
is a general setting.

\begin{lem}\label{shanks}
  Let $\mathcal{P}$ be a set of prime numbers and let $f$ be a 
  function from $\mathcal{P}$ to $\{\pm1\}$. For every $s$
  with $\Re s>1$, we have
  \begin{equation*}
    \prod_{\substack{p\in\mathcal{P},\\ f(p)=-1}}(1-p^{-s})^2
    =
    \frac{\prod_{p\in\mathcal{P}}(1-p^{-s})}{\prod_{p\in\mathcal{P}}(1-f(p)p^{-s})}
    \prod_{\substack{p\in\mathcal{P},\\f(p)=-1}}(1-p^{-2s}).
  \end{equation*}
\end{lem}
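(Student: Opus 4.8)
The plan is to reduce everything to a straightforward manipulation of products, once absolute convergence has been secured. Since $\Re s>1$, each of the three infinite products $\prod_{p\in\mathcal{P}}(1-p^{-s})$, $\prod_{p\in\mathcal{P}}(1-f(p)p^{-s})$ and $\prod_{p\in\mathcal{P},\,f(p)=-1}(1-p^{-2s})$ converges absolutely, and the factors are bounded away from $0$, so the right-hand side is well defined (in particular the denominator does not vanish) and we may regroup factors at will.

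First I would split $\mathcal{P}$ along the fibres of $f$, writing $\mathcal{P}^{+}=\{p\in\mathcal{P}:f(p)=1\}$ and $\mathcal{P}^{-}=\{p\in\mathcal{P}:f(p)=-1\}$. Then $\prod_{p\in\mathcal{P}}(1-p^{-s})$ factors as $\prod_{p\in\mathcal{P}^{+}}(1-p^{-s})\cdot\prod_{p\in\mathcal{P}^{-}}(1-p^{-s})$, while $\prod_{p\in\mathcal{P}}(1-f(p)p^{-s})$ factors as $\prod_{p\in\mathcal{P}^{+}}(1-p^{-s})\cdot\prod_{p\in\mathcal{P}^{-}}(1+p^{-s})$. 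The contribution of $\mathcal{P}^{+}$ is identical in both, so in the quotient it cancels, leaving
\begin{equation*}
  \frac{\prod_{p\in\mathcal{P}}(1-p^{-s})}{\prod_{p\in\mathcal{P}}(1-f(p)p^{-s})}
  =\frac{\prod_{p\in\mathcal{P}^{-}}(1-p^{-s})}{\prod_{p\in\mathcal{P}^{-}}(1+p^{-s})}.
\end{equation*}

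Next I would apply the elementary factorization $1-p^{-2s}=(1-p^{-s})(1+p^{-s})$ for each $p\in\mathcal{P}^{-}$, so that $\prod_{p\in\mathcal{P}^{-}}(1-p^{-2s})=\prod_{p\in\mathcal{P}^{-}}(1-p^{-s})\cdot\prod_{p\in\mathcal{P}^{-}}(1+p^{-s})$. Multiplying this against the displayed quotient, the factor $\prod_{p\in\mathcal{P}^{-}}(1+p^{-s})$ cancels and the right-hand side of the asserted identity collapses to $\prod_{p\in\mathcal{P}^{-}}(1-p^{-s})^{2}$, which is exactly the left-hand side.

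There is essentially no obstacle here: the statement is a bookkeeping identity among absolutely convergent products, and the only point requiring care is that the denominator is non-zero, which is guaranteed by $\Re s>1$. If one prefers, the same argument can be run after taking (principal) logarithms — legitimate since $|p^{-s}|\le 2^{-\Re s}<\tfrac12$ — turning the products into absolutely convergent sums over $\mathcal{P}^{-}$ and reducing the lemma to the scalar identity $2\log(1-x)=\log(1-x)-\log(1+x)+\log(1-x^{2})$ applied termwise with $x=p^{-s}$.
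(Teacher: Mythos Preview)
Your proof is correct and follows essentially the same approach as the paper: both arguments split the product according to the fibres of $f$, cancel the $f(p)=1$ contribution in the quotient, and use the elementary factorization $1-p^{-2s}=(1-p^{-s})(1+p^{-s})$ on the remaining primes. The paper runs the chain of equalities in the opposite direction (starting from the left-hand side divided by $\prod_{f(p)=-1}(1-p^{-2s})$), but the content is identical.
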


\begin{proof}
  The proof is straightforward. We simply write
\begin{align*}
  \prod_{\substack{p\in\mathcal{P},\\
  f(p)=-1}}\frac{(1-p^{-s})^2}{1-p^{-2s}}
  &=
    \prod_{\substack{p\in\mathcal{P},\\ f(p)=-1}}\frac{1-p^{-s}}{1+p^{-s}}
  =
    \prod_{\substack{p\in\mathcal{P},\\ f(p)=-1}}\frac{1-p^{-s}}{1-f(p)p^{-s}}
  \\&=
    \prod_{\substack{p\in\mathcal{P}}}\frac{1-p^{-s}}{1-f(p)p^{-s}}
\end{align*}
as required.
\end{proof}
Shanks's method is efficient to deal with product of primes
belonging to a coset modulo a quadratic character. We generalize it as follows.
\begin{lem}\label{dede}
  Let $q>1$ be a modulus. We set $G_0$ be a subgroup of
  $G=(\mathbb{Z}/q\mathbb{Z})^\times$ and $G_0^\perp$ be the subgroup of
  characters that take the value~1 on $G_0$. For
  any integer $b$, we define $\subg{b}$ to the the subgroup generated
  by~$b$ modulo~$q$. We have
  \begin{equation*}
    \prod_{\chi\in G_0^\perp}L_P(s,\chi)
    =
    \prod_{\substack{G_0\subset K\subset G}}
    \prod_{\substack{p\ge P,\\ \subg{p}{} G_0=K}}
    \Bigl(1-p^{-|K/G_0|s}\Bigr)^{-|G/K|}
  \end{equation*}
  and, for any element $a\notin G_0$ of order~2, we have
  \begin{equation*}
    \prod_{\chi\in G_0^\perp}L_P(s,\chi)^{\chi(a)}
    =
    \prod_{\substack{G_0\subset K\subset G,\\ a\in K}}
    \prod_{\substack{p\ge P,\\ \subg{p}{} G_0=K}}
    \biggl(\frac{(1-p^{|K/G_0|s/2})^2}{1-p^{-|K/G_0|s}}\biggr)^{-|G/K|}
  \end{equation*}
  where $\hat{G}$ is the set of characters of $G$.
\end{lem}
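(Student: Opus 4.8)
The plan is to expand each Dirichlet $L$-function as an Euler product over primes $p\ge P$, and then to regroup the primes according to the subgroup $K=\subg{p}G_0$ of $G$ that they generate together with $G_0$. The starting point is
\[
  \prod_{\chi\in G_0^\perp}L_P(s,\chi)
  =\prod_{\chi\in G_0^\perp}\prod_{p\ge P}\bigl(1-\chi(p)p^{-s}\bigr)^{-1}
  =\prod_{p\ge P}\prod_{\chi\in G_0^\perp}\bigl(1-\chi(p)p^{-s}\bigr)^{-1},
\]
the interchange being legitimate for $\Re s>1$ by absolute convergence. The key algebraic fact is the factorization, valid for a finite abelian group $H$ acting through a faithful pairing, of $\prod_{\psi\in\widehat{H}}(1-\psi(h)T)$ in terms of the order of $h$ in $H$. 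Concretely, since $G_0^\perp$ is canonically the character group of the quotient $G/G_0$, and since $\chi(p)$ for $\chi\in G_0^\perp$ depends only on the image $\bar p$ of $p$ in $G/G_0$, which has order $|K/G_0|$ where $K=\subg{p}G_0$, I would use the standard identity $\prod_{\psi\in\widehat{C_n}}(1-\psi(g)T)=(1-T^n)$ when $g$ generates $C_n$, applied fibrewise: the characters of $G/G_0$ restricted to the cyclic subgroup $\langle\bar p\rangle$ fall into $|(G/G_0)/\langle\bar p\rangle|=|G/K|$ classes, each contributing $(1-T^{|K/G_0|})$, whence
\[
  \prod_{\chi\in G_0^\perp}\bigl(1-\chi(p)p^{-s}\bigr)
  =\bigl(1-p^{-|K/G_0|s}\bigr)^{|G/K|}.
\]
Taking inverses and grouping primes by the value of $K$ (which ranges over subgroups with $G_0\subset K\subset G$) gives the first formula.

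For the second formula, the twist by $\chi(a)$ for a fixed element $a$ of order~$2$ with $a\notin G_0$ is handled by the same regrouping, but now one must evaluate $\prod_{\chi\in G_0^\perp}(1-\chi(p)p^{-s})^{\chi(a)}$. Here I would split $G_0^\perp$ according to whether $\chi(a)=1$ or $\chi(a)=-1$; writing $H_0=\{\chi\in G_0^\perp:\chi(a)=1\}$, which is $(G_0\langle a\rangle)^\perp$, the product is $\prod_{\chi\in H_0}(1-\chi(p)p^{-s})\big/\prod_{\chi\in G_0^\perp\setminus H_0}(1-\chi(p)p^{-s})$. Applying the first formula's computation to the two subgroups $G_0\langle a\rangle\supset G_0$, and using that $[G_0^\perp:H_0]=2$, a short computation expresses this ratio in terms of $p^{-|K/G_0|s/2}$ as displayed; the contribution is trivial (equal to $1$) precisely when $a\notin K$, since then $a$ maps to a nontrivial element of $(G/G_0)/\langle\bar p\rangle$ and the numerator and denominator products over the two cosets coincide, which is why the outer product is restricted to those $K$ with $a\in K$. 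This is essentially Lemma~\ref{shanks} applied on each fibre with $f(p)=\chi(a)$-type signs.

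The main obstacle is bookkeeping rather than conceptual: one must check carefully that $\chi(p)$ really is constant on the fibre $\{p:\subg{p}G_0=K\}$ and equals the value of the corresponding character of $G/G_0$ on a generator, and that the multiplicities $|G/K|$ (resp.\ the halving in the second identity) come out correctly when the characters of $G/G_0$ are restricted to the cyclic group generated by $\bar p$. I would also need to confirm that $a$ of order $2$ and $a\notin G_0$ indeed forces $G_0\langle a\rangle$ to have index $2$ in... no: rather that $[G_0^\perp:(G_0\langle a\rangle)^\perp]=|G_0\langle a\rangle/G_0|=2$, which holds because $a\notin G_0$ has order $2$. Once these index computations are pinned down, both identities follow by the same two moves: expand into Euler products and interchange, then apply the cyclotomic identity $\prod_{\psi}(1-\psi(g)T)=(1-T^{\mathrm{ord}(g)})^{[\widehat H:\langle\text{restrictions}\rangle]}$ fibre by fibre.
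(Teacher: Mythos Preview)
Your plan is correct, and for the first identity it is essentially the paper's argument: both pass to the quotient $G/G_0$, identify $G_0^\perp$ with $\widehat{G/G_0}$, and use that restriction to the cyclic subgroup $\langle\bar p\rangle$ is surjective with fibres of size $|G/K|$, together with the cyclotomic identity $\prod_{\psi\in\widehat{C_n}}(1-\psi(g)T)=1-T^n$ when $g$ generates $C_n$.

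For the second identity your route differs from the paper's. The paper keeps the exponent $\chi(a)$ throughout, pushes the product down to characters of $(HG_0)/G_0$ (each lifting $|G/K|$ times), and then evaluates
\[
\prod_{\psi'\in\reallywidehat{(HG_0)/G_0}}(1-\psi'(p)z)^{\psi'(a)}
\]
by an explicit bare-hands computation with $h$-th roots of unity, splitting according to whether the index is even or odd. Your proposal instead writes
\[
\prod_{\chi\in G_0^\perp}(1-\chi(p)z)^{\chi(a)}
=\frac{\bigl[\prod_{\chi\in (G_0\langle a\rangle)^\perp}(1-\chi(p)z)\bigr]^2}{\prod_{\chi\in G_0^\perp}(1-\chi(p)z)}
\]
and applies the \emph{first} identity twice, once with the subgroup $G_0$ and once with $G_0\langle a\rangle$. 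This is a cleaner reduction: the case split $a\in K$ versus $a\notin K$ falls out immediately from comparing $\langle p\rangle G_0\langle a\rangle$ with $\langle p\rangle G_0$, and no explicit root-of-unity manipulation is needed. The paper's computation, on the other hand, makes visible the intermediate formula $(1-z^{h/2})/(1+z^{h/2})$ before rewriting it, which is closer in spirit to Shanks's original identity.

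One small correction to your final paragraph: $\chi(p)$ is \emph{not} constant on the fibre $\{p:\langle p\rangle G_0=K\}$ for a fixed $\chi$; different primes in the same fibre can give different values. What you actually need (and what your main argument uses correctly) is that the \emph{full product} $\prod_{\chi\in G_0^\perp}(1-\chi(p)z)$ depends only on the order of $\bar p$ in $G/G_0$, i.e.\ only on $K$. Adjust that sentence and the plan goes through.
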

Case $G_0=\{1\}$ of the first identity is classical in Dedekind zeta
function theory, and can be found in~\cite[Proposition 13]{Serre*70}
in a rephrased form. Case $a\neq1$ will not be required for the
general theory. It may however lead quickly to efficient formulae.

\begin{proof}
  We note that $\prod_{\chi\in G_0^\perp}(1-\chi(p)z)^{\chi(a)} =\prod_{\psi\in\hat{H}}(1-\psi(p)z)^{f(\psi)}$
  when $\subg{p}=H$ and where
  \begin{equation}
    \label{eq:6}
    f(\psi)=\sum_{\substack{\chi\in G_0^\perp,\\ \chi|H = \psi}}\chi(a).
  \end{equation}
  The condition $\chi\in G_0^\perp$ can also be
  written as $\chi|G_0=1$, hence we can assume that $\psi|(H\cap
  G_0)=1$. We write
  \begin{equation*}
    \prod_{\chi\in G_0^\perp}(1-\chi(p)z)^{\chi(a)}
    =\prod_{\substack{\psi'\in\widehat{H{} G_0},\\ \psi'|G_0=1}}(1-\psi(p)z)^{f'(\psi')}
  \end{equation*}
  where
  \begin{equation}
    \label{eq:76}
    f'(\psi')=\sum_{\substack{\chi\in G_0^\perp,\\ \chi|H{} G_0 = \psi}}\chi(a).
  \end{equation}
  When $a$ lies outside $H{} G_0$, this sum vanishes;
  otherwise
  it equals $|G/(H{} G_0)|\psi'(a)$.
  The characters of $H{} G_0$ that are trivial on $G_0$ are
  canonically identified with the
  characters of the cyclic group $(H{} G_0)/G_0$.
  We thus have
  \begin{equation*}
    \prod_{\substack{\psi'\in\widehat{H{} G_0},\\ \psi'|G_0=1}}(1-\psi(p)z)
    =
    1-z^{|(H{} G_0)/G_0|}
  \end{equation*}
  and this proves our first formula.
  
  When $a^2\equiv 1[q]$ and $a\notin G_0$, and since $(H{} G_0)/G_0$
  is cyclic, of (even) order~$h$ say, the characters are given by
  $\chi(p^x)=e(cx/h)$ since $p$ is a generator and where $c$ ranges
  $\{0,\cdots,h-1\}$. We thus have, when $a\in H$,
  \begin{align*}
    \prod_{\psi'\in\reallywidehat{(H{} G_0)/G_0}}\mkern-30mu(1-\psi'(p)z)^{\psi'(a)}
    &=\mkern-5mu\prod_{c\mod h}(1-e(c/h)z)^{e(c/2)}
    \\&=\mkern-7mu\prod_{0\le d\le \frac{h-2}2}
    \biggl(1-e\Bigl(\frac{2d}{h}z\Bigr)\biggr)
    \prod_{0\le d\le \frac{h-2}2}\biggl(1-e\Bigl(\frac{2d+1}{h}z\Bigr)\biggr)^{-1}
    \\&=
    \frac{1-z^{h/2}}{1-(e(1/h)z)^{h/2}}
    =\frac{1-z^{h/2}}{1+z^{h/2}}
    =\frac{(1-z^{h/2})^2}{1-z^h}.
  \end{align*}
  The reader will readily complete the proof by setting $K=H{} G_0$.
\end{proof}

\subsection{A special case}

Let us select for $G_0$ the kernel of a given quadratic character
$\chi_1$. The subgroup $K$ can take only two values, $G_0$ or~$G$. We
thus get
\begin{equation*}
  L(s,\chi_1)L(s,\chi_0)=
  \prod_{\chi_1(p)=1}(1-p^{-s})^{2}
  \prod_{\chi_1(p)=-1}(1-p^{-2s})
\end{equation*}
which gets converted into
\begin{equation}\label{myshanks}
  L(s,\chi_1)L(s,\chi_0)=
  L(s,\chi_0)^{2}
  \prod_{\chi_1(p)=-1}(1-p^{-s})^{-2}
  \prod_{\chi_1(p)=-1}(1-p^{-2s}).
\end{equation}
Lemma~\ref{shanks} can also be used to obtain the same result.

\subsection{More details modulo 12}
\label{Moredetails}
Here is the character table modulo~12:

\vspace*{3mm}
\begin{tabular}{|c|cccc|}
  \hline
  &1&5&7&11\cr
  \hline
  $\chi_{0,12}$&1&1&1&1\cr        
  $\chi_{1,12}$&1&-1&1&-1\cr        
  $\chi_{2,12}$&1&1&-1&-1\cr        
  $\chi_{3,12}$&1&-1&-1&1\cr        
  \hline
\end{tabular}
\subsubsection*{First Identity} This table enables us to write:
\par
\vspace*{3mm}
\begin{tabular}{|c|cccc|}
  \hline
  $p\mod 12$&1&5&7&11\cr
  \hline
  $(1-\chi_{0,12}(p)z)$&$1-z$&$1-z$&$1-z$&$1-z$\cr        
  $(1-\chi_{1,12}(p)z)$&$1-z$&$1+z$&$1-z$&$1+z$\cr        
  $(1-\chi_{2,12}(p)z)$&$1-z$&$1-z$&$1+z$&$1+z$\cr        
  $(1-\chi_{3,12}(p)z)$&$1-z$&$1+z$&$1+z$&$1-z$\cr
  \hline
  $\prod_\chi\cdots$&$(1-z)^4$&$(1-z^2)^2$&$(1-z^2)^2$&$(1-z^2)^2$\cr
  \hline
\end{tabular}
\vspace*{3mm}
\par\noindent
And thus
\begin{equation*}
  \prod_{\chi}L(s,\chi)=\prod_{p\ge5}\frac{1}{(1-p^{-2s})^2}
  \prod_{\substack{p\ge5,\\ p\equiv 1[12]}}\frac{(1-p^{-2s})^2}{(1-p^{-s})^4},
\end{equation*}
which gives rise to the formula
\begin{equation*}
  \prod_{\substack{p\ge5,\\ p\equiv 1[12]}}\frac{1}{(1-p^{-s})^4}
  =
  \prod_{\substack{p\ge5,\\ p\equiv 1[12]}}\frac{1}{(1-p^{-2s})^2}
  \frac{\prod_{\chi}L(s,\chi)}{((1-2^{-2s})(1-3^{-2s})\zeta(2s))^{2}}.
\end{equation*}
This identity reduces the computation of $\zeta(s; 12,1)$
to the one of $\zeta(2s;12,1)$ and we can iterate this formula. Note that
we can take the required fourth root as only real numbers are
involved, when the terms are properly grouped.
\subsubsection*{Second Identity}
Similarly, we find that
\vspace*{3mm}
\par\noindent
\begin{tabular}{|c|cccc|}
  \hline
  $p$&1&5&7&11\cr
  \hline
  $(1-\chi_{0,12}(p)z)$&$1-z$&$1-z$&$1-z$&$1-z$\cr        
  $(1-\chi_{1,12}(p)z)^{-1}$&$(1-z)^{-1}$&$(1+z)^{-1}$&$(1-z)^{-1}$&$(1+z)^{-1}$\cr        
  $(1-\chi_{2,12}(p)z)^{-1}$&$(1-z)^{-1}$&$(1-z)^{-1}$&$(1+z)^{-1}$&$(1+z)^{-1}$\cr        
  $(1-\chi_{3,12}(p)z)$&$1-z$&$1+z$&$1+z$&$1-z$\cr
  \hline
  $\prod_\chi\cdots$&$1$&$1$&$1$&$\displaystyle\frac{(1-z)^4}{(1-z^2)^2}$\cr
  \hline
\end{tabular}
\vspace*{3mm}
\par\noindent
whence
\begin{equation*}
  \frac{L(s,\chi_{0,12})L(s,\chi_{3,12})}
  {L(s,\chi_{1,12})L(s,\chi_{2,12})}
  =
  \prod_{\substack{p\ge5,\\ p\equiv 11[12]}}\frac{(1-p^{-s})^2}{(1-p^{-2s})},
\end{equation*}
which we finally write in the form
\begin{equation*}
  \prod_{\substack{p\ge5,\\ p\equiv 11[12]}}\frac{1}{(1-p^{-s})^2}
  =
  \frac{L(s,\chi_{0,12})L(s,\chi_{3,12})}
  {L(s,\chi_{1,12})L(s,\chi_{2,12})}
  \prod_{\substack{p\ge5,\\ p\equiv 11[12]}}\frac{1}{(1-p^{-2s})^2}.
\end{equation*}
This identity again reduces the computation of $\zeta(s;12,11)$
to the one of $\zeta(2s;12,11)$ and we can iterate this formula. Again, 
we can take the required fourth rooths as only real numbers are
involved, when the terms are properly grouped. 

\subsubsection*{Third Identity}
We also find that
\vspace*{3mm}
\par\noindent
\begin{tabular}{|c|cccc|}
  \hline
  $p$&1&5&7&11\cr
  \hline
  $(1-\chi_{0,12}(p)z)$&$1-z$&$1-z$&$1-z$&$1-z$\cr        
  $(1-\chi_{1,12}(p)z)^{-1}$&$(1-z)^{-1}$&$(1+z)^{-1}$&$(1-z)^{-1}$&$(1+z)^{-1}$\cr        
  $(1-\chi_{2,12}(p)z)$&$1-z$&$1-z$&$1+z$&$1+z$\cr        
  $(1-\chi_{3,12}(p)z)^{-1}$&$(1-z)^{-1}$&$(1+z)^{-1}$&$(1+z)^{-1}$&$(1-z)^{-1}$\cr
  \hline
  $\prod_\chi\cdots$&$1$&$\displaystyle\frac{(1-z)^4}{(1-z^2)^2}$&$1$&$1$\cr
  \hline
\end{tabular}
\vspace*{3mm}
\par\noindent
whence
\begin{equation*}
  \frac{L(s,\chi_{0,12})L(s,\chi_{2,12})}
  {L(s,\chi_{1,12})L(s,\chi_{3,12})}
  =
  \prod_{\substack{p\ge5,\\ p\equiv 5[12]}}\frac{(1-p^{-s})^2}{(1-p^{-2s})},
\end{equation*}
We are exactly in the same position as with the second identity.
We again finally write in the form
\begin{equation*}
  \prod_{\substack{p\ge5,\\ p\equiv 5[12]}}\frac{1}{(1-p^{-s})^2}
  =
  \frac{L(s,\chi_{0,12})L(s,\chi_{2,12})}
  {L(s,\chi_{1,12})L(s,\chi_{3,12})}
  \prod_{\substack{p\ge5,\\ p\equiv 5[12]}}\frac{1}{(1-p^{-2s})^2}.
\end{equation*}
This identity again reduces the computation of $\zeta(s; 12,5)$
to the one of $\zeta(2s;12,5)$ and we can iterate this formula. Again, 
we can take the required fourth rooths as only real numbers are
involved, when the terms are properly grouped. 

\subsubsection*{Fourth Identity}
We can easily produce a similar formula linking
$\zeta(s; 12,7)$ to
$\zeta(2s; 12, 7)$ or
use the fact that the product
$\zeta(s; 12, 1)\zeta(s; 12, 5)\zeta(s; 12, 7)\zeta(s; 12, 11)$ equals
$L(s,\chi_{0,12})$, and thus is known, to infer such
a formula from the ones above.

\section{Products obtained in general}
We want to compute Euler products of the shape
$\zeta(s;q,\mathcal{A})$ for $s>1$ and some subset $\mathcal{A}$ of
$(\mathbb{Z}/q\mathbb{Z})^\times$. Computing $L(s,\chi)$ is easier as
it can be reduced to sums over integers is some arithmetic
progressions. Equation~\eqref{myshanks} reduces in a special case the
computations of $\zeta(s;q,\mathcal{A})$ to the one of
$\zeta(s;q,\mathcal{A})$, and we can continue the process. We soon
reach $\prod_{p\in\mathcal{A}\mod q}(1-1/p^{2^Ns})$ with a large
enough~$N$ which can be approximated by $1+\Ocal(2^{-2^Ns})$. The
object of this section is to devise a setting to understand which sums
we relate together.
\begin{defi}\label{li}\label{mono}
  Two elements $g_1$ and $g_2$ of the abelian group $G$ are said to be
  \emph{lattice-invariant}
if and only if they generates the same
  group.

  The map between the set of cyclic subgroups of $G$ and the set of
  \emph{lattice-invariant}-classes which, to a subgroup, associates
  the subset of its generators, is one-to-one.
  
\end{defi}
The function \texttt{GetLatticeInvariantClasses} of the script
\texttt{LIEP.sage} gives the two lists: the one of the cyclic
subgroups and the one of their generators, ordered similarly and in
increasing size of the subgroup.

Any two elements of
$(\Z{q})^\times$ equivalent according to it cannot be distinguished by
using the formulae of Lemma~\ref{dede}. Conversely, the question is
to know whether we are indeed able to distinguish each class.
%
%
To each class
$\mathcal{A}$, we attach the enumerable collection of symbols $(x_{\mathcal{A}}^r)_{r\ge1}$.
We shall replace each of them according to the rule
\begin{equation}
  \label{eq:9}
  x_{\mathcal{A}}^r \mapsto -\log\prod_{\substack{p+q\mathbb{Z}\in {\mathcal{A}},\\ p\ge P}}\bigl(1-p^{-rs}\bigr).
\end{equation}
We consider the module of
finite formal combinations
\begin{equation*}
  \sum_{\substack{{\mathcal{A}}\in G^\sharp,\\ r\ge1}}\alpha_{{\mathcal{A}},r} x_{\mathcal{A}}^r
\end{equation*}
with coefficients $\alpha_{{\mathcal{A}},r}\in\mathbb{Z}$ and indeterminates
$x_{\mathcal{A}}^r$. The superscript $r$ is \emph{not} a power.
We consider the following special elements. Let $G_0\subset K\subset
G$ be two subgroups such that $K/G_0$ is cyclic. We define
\begin{equation}
  \label{eq:5}
  g(G_0,K,t)=\sum_{{\mathcal{A}}\in G^\sharp, {\mathcal{A}}G_0=K}x_{{\mathcal{A}}}^{t|K/G_0|}.
\end{equation}
With that, we find that
\begin{equation}
  \label{basicrel}
  \gamma(G_0,t)=\sum_{\substack{G_0\subset K\subset G}}
  |G/K|g(G_0,K,t).
\end{equation}

\section{Iterating the formula}

The first identity of Lemma~\ref{dede} gives us as many identities as
there are subgroups~$G_0$; we know by Definition~\ref{mono} that the
number of \emph{lattice-invariant}-classes equals the one of cyclic
subgroups. It turns out that it is enough to restrict our attention to
cyclic subgroups $G_0$. Let $\mathscr{G}$ be the subset of such
subgroups, which we order by inclusion.  On
recalling definition~\eqref{defGammaoft}, we may
rewrite~\eqref{basicrel} in the form
\begin{equation}
  \label{initeq}
  \Gamma(t)
  =\sum_{d||G|}M_d V_s(dt)
\end{equation}
where (this is the case $K=G_0$)
\begin{equation}
  \label{eq:20}
  M_1\big|_{i=G_0, j={\mathcal{A}}}=
  \begin{cases}
    |G/K|&\text{if ${\mathcal{A}}\subset G_0$},\\
    0&\text{otherwise,}
  \end{cases}
\end{equation}
and, where, when $d>1$ (i.e. $G_0\subsetneq K$), we have
\begin{equation}
  \label{eq:19}
  M_d\big|_{i=G_0, j={\mathcal{A}}}=
  \begin{cases}
    |G/{\mathcal{A}}G_0|&\text{if $|{\mathcal{A}} G_0|/|G_0|=d$},\\
    0&\text{otherwise.}
  \end{cases}
\end{equation}
Equation~\eqref{initeq} gives us a relation between $M_1V_s(t)$ and
$M_dV_s(dt)$ for several $d$'s that are strictly larger than~1. Our
roadmap is to invert the matrix $M_1$ and to iterate this formula.
We compute explicitly $M_1^{-1}$ by using some generalised Moebius
inversion, which we first put in place.
\subsubsection*{The Moebius function associated to $\mathscr{G}$}
We follow closely the exposition of Rota in~\cite{Rota*64a}.
On the algebra of functions $f$ on couples $(K,L)$ of points of
$\mathscr{G}$ such that $K\subset L$ (the so-called \emph{incidence
  algebra}, see \cite[Section 3]{Rota*64a}), we define the convolution
product
\begin{equation*}
  (f\star g)(K,L)=\sum_{K\subset H\subset L}f(K,H)g(H,L).
\end{equation*}
We consider the $\mathscr{G}$-zeta function which is defined by
\begin{equation*}
  \zeta_{\mathscr{G}}(K,L)=
  \begin{cases}
    1&\text{when $K\subset L$,}\\
    0&\text{otherwise}.
  \end{cases}
\end{equation*}
This function is shown to be invertible in the above algebra and its
inverse is called the $\mathscr{G}$-Moebius function, denoted by $\mu_{\mathscr{G}}$.
By definition, we have the two Moebius inversion formulas:
\begin{equation}
  \label{eq:21}
  \sum_{K\subset H\subset L}f(K,H)=g(K,L)\implies
  f(K,L)=\sum_{K\subset H\subset L}g(K,H)\mu_{\mathscr{G}}(H,L)
\end{equation}
and
\begin{equation}
  \label{eq:21b}
  \sum_{K\subset H\subset L}f(H,L)=g(K,L)\implies
  f(K,L)=\sum_{K\subset H\subset L}\mu_{\mathscr{G}}(K,H)g(H,L).
\end{equation}
We end this reminder with a formula giving the value of
$\mu_{\mathscr{G}}(K,H)$.
\subsubsection*{Computing $\mu_{\mathscr{G}}(K,H)$}
Let $C_p(K,H)$ be the number of chains of length $p$ going from $K$ to
$H$, i.e. the number of $p+1$-uples $K=A_0\subsetneq A_1\subsetneq
A_2\subsetneq \ldots \subsetneq A_p=H$. Then (cf \cite[Proposition 6]{Rota*64a})
\begin{equation}
  \label{eq:22}
  \mu_{\mathscr{G}}(K,H)=\sum_{p\ge0}(-1)^p C_p(K,H).
\end{equation}
Since the subgroups of a cyclic group are all cyclic, we only have to
consider the chains in $H/K$. There is one and only one subgroup for
each divisor of $|H/K|$, and any two such subgroups $L_1$ and $L_2$
are included according to whether $|L_1|\big||L_2|$ or not. This
transfers the problem on a problem on integers. Let $c_{\ell}(n)$ be the
number of $\ell+1$-divisibility chains between 1 and~$n$. We have
$c_0(n)=\1_{n=1}$ while $c_1(n)=\1_{n\ge2}$ and $c_{p+1}(n)=(c_{\ell}\star c_1)(n)$. This proves that
$c_{\ell}(n)=d_{\ell}^*(n)$, the number of $p$-tuples $(d_1,d_2,\ldots,d_{\ell})$ of
divisors of~$n$ that are such that $d_i\neq1$ and $d_1d_2\cdots
d_{\ell}=n$. We have
\begin{equation*}
  \sum_{n\ge1}d_{\ell}^*(n)/n^s=(\zeta(s)-1)^\ell
\end{equation*}
and thus the generating series of $\sum_{p\ge0}(-1)^pd^*_{\ell}(n)$ is
\begin{equation*}
  \sum_{\ell\ge0}(-1)^\ell(\zeta(s)-1)^\ell=\frac{1}{1+\zeta(s)-1}=1/\zeta(s).
\end{equation*}
We have proved that
\begin{equation}
  \label{eq:23}
  \mu_{\mathscr{G}}(K,H)=\mu(|H/K|).
\end{equation}
\subsubsection*{Inverting the matrix $M_1$}

\begin{prop}
  \label{compL}
  The matrix $M_1$ is invertible and the coefficients of its inverse
  are given by
  \begin{equation*}
    M_1^{-1}\big|_{i={\mathcal{A}}, j=K}=
    \begin{cases}
      \mu(|\subg{{\mathcal{A}}}/K|)/|G/K|&\text{when $K\subset \subg{{\mathcal{A}}}$},\\
      0&\text{otherwise.}
    \end{cases}
  \end{equation*}
\end{prop}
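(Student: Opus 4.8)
The plan is to show that the matrix $M_1^{-1}$ given in the statement is a genuine two-sided inverse of $M_1$, and the cleanest route is to recognize both matrices as transcriptions of the $\mathscr{G}$-zeta and $\mathscr{G}$-Moebius functions, so that their product collapses by the Moebius inversion identities \eqref{eq:21} and \eqref{eq:21b} together with the formula \eqref{eq:23} that $\mu_{\mathscr{G}}(K,H)=\mu(|H/K|)$. First I would untangle the index conventions: the rows of $M_1$ are indexed by cyclic subgroups $G_0\in\mathscr{G}$ and its columns by classes $\mathcal{A}\in G^\sharp$, and via the canonical bijection $\mathcal{A}\leftrightarrow\subg{\mathcal{A}}$ of Definition~\ref{mono} we may regard $M_1$ as a matrix on $\mathscr{G}\times\mathscr{G}$ with $M_1|_{G_0,B}=|G/B|$ if $B\subset G_0$ and $0$ otherwise (writing $B=\subg{\mathcal{A}}$). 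Similarly $M_1^{-1}|_{B,K}=\mu(|B/K|)/|G/K|$ if $K\subset B$ and $0$ otherwise. Note the inclusion in $M_1$ goes ``$B\subset G_0$'' whereas in the claimed inverse it goes ``$K\subset B$''; this reversal is exactly why one of the two Rota inversion formulas, rather than the other, applies.

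Next I would compute the product. Fix cyclic subgroups $G_0$ and $K$. Then
\begin{equation*}
  (M_1 M_1^{-1})\big|_{G_0,K}
  =\sum_{B\in\mathscr{G}}M_1\big|_{G_0,B}\,M_1^{-1}\big|_{B,K}
  =\sum_{\substack{B\in\mathscr{G},\\ K\subset B\subset G_0}}
  |G/B|\cdot\frac{\mu(|B/K|)}{|G/K|},
\end{equation*}
since the nonzero terms require both $B\subset G_0$ and $K\subset B$. Because every intermediate subgroup of the cyclic group $G_0$ is automatically cyclic, the sum over $B\in\mathscr{G}$ with $K\subset B\subset G_0$ is simply the sum over \emph{all} subgroups between $K$ and $G_0$, so by \eqref{eq:23} this equals $\sum_{K\subset B\subset G_0}\mu_{\mathscr{G}}(B,G_0)$, up to the harmless scalar $|G/K|^{-1}$ absorbed against $|G/B|$ — more precisely $|G/B|/|G/K|=|K\,\text{vs}\,B|^{-1}$ does not simplify away, so instead I would argue directly: the factor $|G/B|$ telescopes with the definition only after one observes that $\mu(|B/K|)$ depends on $B$ only through $|B/K|$, and one rewrites the sum over divisors $e=|B/K|$ of $|G_0/K|$, getting $\frac{1}{|G/K|}\sum_{e\mid |G_0/K|}|G/K|e^{-1}\mu(e)\cdot(\text{number of }B)$. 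The slick alternative, which I would actually use, is to factor $M_1 = D\,Z$ and $M_1^{-1} = Z^{-1}D^{-1}$ where $Z|_{G_0,B}=\1_{B\subset G_0}$ is (a transpose of) the $\mathscr{G}$-zeta function and $D=\mathrm{diag}(|G/G_0|)_{G_0\in\mathscr{G}}$; then $Z^{-1}|_{B,K}=\mu_{\mathscr{G}}(K,B)=\mu(|B/K|)$ by \eqref{eq:23}, and $M_1^{-1}|_{B,K}=(Z^{-1})|_{B,K}\cdot(D^{-1})|_{K,K}=\mu(|B/K|)/|G/K|$, which is exactly the claimed formula. Invertibility of $M_1$ is then immediate since $D$ is invertible and $Z$ is unitriangular (with respect to any linear extension of the inclusion order on $\mathscr{G}$), hence invertible, and its inverse is the $\mathscr{G}$-Moebius function by the very definition recalled before Proposition~\ref{compL}.

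The main obstacle — really the only point requiring care — is the bookkeeping with the two different inclusion directions and the transpose, i.e. making sure that ``the rows of $M_1$ are indexed by $G_0$, columns by $\mathcal{A}$'' combined with the condition ``$\mathcal{A}\subset G_0$'' lines up with $Z$ rather than its transpose, and correspondingly that it is $\mu_{\mathscr{G}}(K,B)$ and not $\mu_{\mathscr{G}}(B,K)$ that appears in $Z^{-1}$; getting this backwards would put the Moebius argument $|B/K|$ in the wrong slot. Once the identification $M_1=DZ$ (with the correct orientation) is pinned down, the result follows from \eqref{eq:23} with essentially no computation. I would therefore devote the bulk of the written proof to carefully stating which ordered pair $(K,L)$ with $K\subset L$ corresponds to which matrix entry, invoke the incidence-algebra setup already developed, and conclude.
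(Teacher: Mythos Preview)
Your factorization approach $M_1=DZ$ with $D=\mathrm{diag}(|G/G_0|)$ and $Z|_{G_0,B}=\1_{B\subset G_0}$ is correct and is essentially what the paper does: the paper writes $(M_1V)_{G_0}=|G/G_0|\sum_{B\subset G_0}v_B$, embeds this into an auxiliary two-variable function $F(H,K)$, applies the Rota inversion formula~\eqref{eq:21}, and specializes $H=\{1\}$; your matrix factorization is the same Moebius inversion stated more directly. One slip to fix: in your first paragraph you write ``$M_1|_{G_0,B}=|G/B|$'', but the entry is $|G/G_0|$ (this is the case $K=G_0$ in~\eqref{eq:20}). Your $D=\mathrm{diag}(|G/G_0|)$ silently corrects this, but the erroneous $|G/B|$ is precisely why your attempted direct computation of $(M_1M_1^{-1})|_{G_0,K}$ in the second paragraph refuses to simplify: with the correct entry the sum is $\dfrac{|G/G_0|}{|G/K|}\sum_{K\subset B\subset G_0}\mu(|B/K|)=\dfrac{|G/G_0|}{|G/K|}\,\1_{G_0=K}=\1_{G_0=K}$, and no further manipulation is needed.
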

\begin{proof}
  We find that
  \begin{equation*}
    M_1 V=(|G/K|\sum_{{\mathcal{A}}\subset K}v_{\mathcal{A}})_K.
  \end{equation*}
  We replace ${\mathcal{A}}$ by the subgroup $B=\subg{{\mathcal{A}}}$ it generates. Inverting
  $f(K)=|G/K|\sum_{B\subset K}v_B$ is done with the Moebius function of
  $\mathscr{G}$. To do so, simply consider the more general function
  \begin{equation*}
    F(H,K)=|G/K|\sum_{H\subset B\subset K}v^*(H,B)
    =|G/K|(v^*\star\zeta_{\mathscr{G}})(H,K)
  \end{equation*}
  where $v^*(H,B)=v_B$. This gets inverted in
  \begin{equation*}
    v^*(H,B)=\sum_{H\subset K\subset B}F(H,K)|G/K|^{-1}\mu_{\mathscr{G}}(K,B)
  \end{equation*}
  which yield, by specializing $H=\{1\}$
  \begin{equation*}
    v_B=\sum_{K\subset B}f(K)|G/K|^{-1}\mu_{\mathscr{G}}(K,B).
  \end{equation*}
  We could also have applied \cite[Proposition 2 (**)]{Rota*64a}.
  This gives us
  \begin{equation*}
    M_1^{-1}\big|_{i=B, j=K}=
  \begin{cases}
    \mu_{\mathscr{G}}(K,B)/|G/K|&\text{if $K\subset B$},\\
    0&\text{otherwise.}
  \end{cases}
\end{equation*}
Our proposition is proved.
\end{proof}
The function \texttt{GetM1Inverse} of the script \texttt{LIEP.sage}
computes $M_1^{-1}$.
\subsubsection*{The recursion formula}
We start from~\eqref{initeq} and deduce that
\begin{equation}
  \label{seceq}
  V_s(t)=-\sum_{\substack{d||G|,\\ d\neq1}}M_1^{-1}M_d V_s(dt)+M_1^{-1}\Gamma(t).
\end{equation}
We readily find that $N_d=dM_1^{-1}M_d$ is given by~\eqref{defNd}.
\begin{proof}
  Indeed we have
  \begin{equation*}
    N_d\big|_{i=B_0, j=B_1}=d\sum_{\substack{K\subset B_0,\\ K\subset
        B_1,\\ |KB_1/K|=d }}
    \mu(|B_0/K|)|G/K|^{-1}|G/B_1|.
  \end{equation*}
  This is exactly what we have written in~\eqref{defNd}.
\end{proof}
By considering the exact sequence
\begin{equation}
  \label{eq:10}
  1\longrightarrow K\cap B_1 \mathop{\xrightarrow{\hspace*{30pt}}}\limits_{k\mapsto (k,
    k^{-1})} K\times B_1\mathop{\xrightarrow{\hspace*{30pt}}}\limits_{ (k,b_1)\mapsto kb_1}
  KB_1 \longrightarrow 1,
\end{equation}
one shows that $|KB_1/K|=|B_1|/|K\cap B_1|$. As a consequence, we see that only the $d$ that divides the \emph{exponent}
of $G$ appear.
The function \texttt{GetNds} of the script
  \texttt{LIEP.sage}
computes $(N_d)_{d}$.

\vspace*{3mm}
\noindent\framebox{\parbox{\linewidth}{\begin{equation}
  \label{trieq}
  V_s(t)=-\sum_{\substack{d|\exp G,\\ d\neq1}}
  \frac{N_d}{d} V_s(dt)+M_1^{-1}\Gamma(t).
\end{equation}}}
\subsubsection*{Unfolding the recursion}
Let $z\ge1$ and $r\ge1$ be two parameters.
We have
\begin{multline}
  \label{recursion}
  V_s(t)=  
  (-1)^{r}
  \sum_{\substack{d_1\cdots d_{r}\le z}}
  \frac{N_{d_1}}{d_1}\ldots
  \frac{N_{d_r}}{d_r} V_s(d_1\ldots d_r t)
  \\+
  \sum_{1\le v\le r}(-1)^v
  \sum_{\substack{d_1\cdots d_{v-1}\le z,\\
      d_1\cdots d_{v-1}d_v>z}}
  \frac{N_{d_1}}{d_1}\ldots
  \frac{N_{d_v}}{d_v} V_s(d_1\ldots d_v t)
  \\
  +\sum_{1\le v\le r-1}(-1)^{v}
  \sum_{\substack{d_1\cdots d_{v}\le z}}
  \frac{N_{d_1}}{d_1}\ldots
  \frac{N_{d_{v}}}{d_{v}}M_1^{-1}\Gamma(d_1\ldots d_v t)
  +
  M_1^{-1}\Gamma(t)
\end{multline}
where $d_1,\ldots, d_r$ are all divisors of $\exp G$ excluding~1.
We can incorporate the last summand in the one before by considering
as the value for $s=0$.
\begin{proof}
  Let us prove this formula by recursion. Case $r=1$ is
  just~\eqref{trieq}. Let us see precisely what happens for $r=2$. We
  start from
  \begin{equation*}
    V_s(t)=-\sum_{\substack{d_1|\exp G,\\ d_1\neq1}}
    \frac{N_{d_1}}{d_1} V_s(d_1t)+M_1^{-1}\Gamma(t)
  \end{equation*}
  which we rewrite as
  \begin{equation*}
    V_s(t)=-\sum_{\substack{d_1|\exp G,\\ d_1\neq1,\\ d_1\le z}}
    \frac{N_{d_1}}{d_1} V_s(d_1t)
    -\sum_{\substack{d_1|\exp G,\\ d_1\neq1,\\ d_1> z}}
    \frac{N_{d_1}}{d_1} V_s(d_1t)
    +M_1^{-1}\Gamma(t).
  \end{equation*}
  We use again this equation on $V_s(d_1t)$ when $d_1\le z$, and $z/d_1$
  rather than $z$, getting
  \begin{multline*}
    V_s(t)=\sum_{\substack{d_1|\exp G,\\ d_1\neq1,\\ d_1\le z}}
    \sum_{\substack{d_2|\exp G,\\ d_2\neq1,\\ d_1d_2\le z}}
    \frac{N_{d_1}}{d_1}\frac{N_{d_2}}{d_2} V_s(d_1d_2t)
    \\+
    \sum_{\substack{d_1|\exp G,\\ d_1\neq1,\\ d_1\le z}}
    \sum_{\substack{d_2|\exp G,\\ d_2\neq1,\\ d_1d_2> z}}
    \frac{N_{d_1}}{d_1}\frac{N_{d_2}}{d_2} V_s(d_1d_2t)
    -\sum_{\substack{d_1|\exp G,\\ d_1\neq1,\\ d_1> z}}
    \frac{N_{d_1}}{d_1} V_s(d_1t)
    \\
    -\sum_{\substack{d_1|\exp G,\\ d_1\neq1,\\ d_1\le z}}
    \frac{N_{d_1}}{d_1} M_1^{-1}\Gamma(t)
    +M_1^{-1}\Gamma(t).
  \end{multline*}
  To go from $r$ to $r+1$, we select the divisors
  $d_r$ that are such that $d_1d_2\cdots d_r\le z$
  and employ~\eqref{trieq} on $V_s(d_1\cdots d_rt)$. 
\end{proof}

\begin{lem}
  The coefficients of a product $N_{d_1}N_{d_2}\cdots N_{d_v}$ are at most (in
  absolute value) equal
  to~$|G^\sharp|^{v-1}$, where $G^\sharp$ is the set of
  \emph{lattice-invariant} classes (which is also the number of
  cyclic subgroups of $G$).
\end{lem}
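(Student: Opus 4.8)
The claim is that every entry of a product $N_{d_1}\cdots N_{d_v}$ is bounded in absolute value by $|G^\sharp|^{v-1}$. The natural strategy is induction on $v$, and for this I would first establish the base case $v=1$, i.e. that each entry of a single matrix $N_d$ has absolute value at most $1$. Recall from~\eqref{defNd} that
\begin{equation*}
  N_d\big|_{i=B_0,\,j=B_1}=\sum_{\substack{K\subset B_0,\\ |KB_1/K|=d}}\mu(|B_0/K|).
\end{equation*}
The key observation is that the subgroups $K$ with $K\subset B_0$ are exactly the cyclic subgroups of $B_0$, hence in bijection with the divisors of $|B_0|$; moreover $\mu(|B_0/K|)$ depends only on $|B_0/K|$, and for a fixed $B_0$ the map $K\mapsto |B_0/K|$ is a bijection between subgroups of $B_0$ and divisors of $|B_0|$. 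So, dropping the constraint $|KB_1/K|=d$ for a moment, the full sum $\sum_{K\subset B_0}\mu(|B_0/K|)=\sum_{e\mid |B_0|}\mu(e)$ equals $0$ unless $|B_0|=1$. The constraint $|KB_1/K|=d$ (equivalently $|B_1|/|K\cap B_1|=d$, via the exact sequence~\eqref{eq:10}) restricts $K$ to a sub-interval of the divisor lattice; I would argue that over this restricted range the alternating sum $\sum \mu(|B_0/K|)$ is still an alternating sum of Möbius values over an "interval" of divisors, and such sums are always $0$ or $\pm1$. Concretely, partition the $K\subset B_0$ according to the value of $K\cap B_1$: for each candidate subgroup $D\subset B_0\cap B_1$ with $|B_1|/|D|=d$, the inner sum over $\{K\subset B_0: K\cap B_1=D\}$ is again a Möbius-type sum which telescopes, and one checks there is at most one admissible $D$ contributing a nonzero total, giving $|N_d|_{B_0,B_1}|\le 1$.

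For the inductive step, suppose the bound $|G^\sharp|^{v-1}$ holds for a product of $v$ matrices and consider $N_{d_0}(N_{d_1}\cdots N_{d_v})$. Writing $P=N_{d_1}\cdots N_{d_v}$, the $(B_0,B_1)$ entry of $N_{d_0}P$ is $\sum_{K}(N_{d_0})_{B_0,K}(P)_{K,B_1}$, a sum over at most $|\mathscr{G}|=|G^\sharp|$ cyclic subgroups $K$. Each $(P)_{K,B_1}$ is at most $|G^\sharp|^{v-1}$ by induction, and each $(N_{d_0})_{B_0,K}$ is at most $1$ by the base case; but a crude triangle-inequality bound gives $|G^\sharp|^{v}$, which is one factor too large. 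To recover the sharper $|G^\sharp|^{v-1}$, I would instead carry a stronger invariant through the induction: namely that for each fixed row $B_0$, the sum of the absolute values of the entries $(N_{d_1}\cdots N_{d_v})_{B_0,\cdot}$ over the column index — or better, the $\ell^1$-norm of each row of $N_{d}$ itself — is at most $1$. Indeed, the base-case analysis above actually shows more: for a fixed $B_0$, $\sum_{B_1}|(N_d)_{B_0,B_1}|\le 1$, because the nonzero entries in row $B_0$ correspond to disjoint families of admissible $K$'s partitioned by the value of $d=|B_1|/|K\cap B_1|$, and the total positive-and-negative cancellation still confines the row $\ell^1$-norm to $\le 1$. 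Granting this, $\|N_{d_0}\|_{1\to\infty}$-type estimates compose: $(N_{d_0}P)_{B_0,B_1}=\sum_K (N_{d_0})_{B_0,K}(P)_{K,B_1}$ has absolute value at most $\bigl(\max_K|(P)_{K,B_1}|\bigr)\sum_K|(N_{d_0})_{B_0,K}|\le |G^\sharp|^{v-1}\cdot 1$, closing the induction.

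The main obstacle is the base case — specifically the assertion that each row of $N_d$ has $\ell^1$-norm at most $1$, not merely that each entry is at most $1$ in absolute value. If that stronger statement fails, the clean submultiplicative composition breaks and one is left with the lossy $|G^\sharp|^v$ bound. So the crux of the argument is a careful combinatorial analysis of the double alternating sum defining $N_d$: one must show that, fixing $B_0$ and summing $|\sum_{K\subset B_0,\,|KB_1/K|=d}\mu(|B_0/K|)|$ over all cyclic $B_1$, the contributions telescope against each other so that the grand total stays bounded by $1$. The cleanest route is probably to re-express $N_d$ as a product of the (full) Möbius matrix $M_1^{-1}M_d$ as in the paragraph preceding the lemma, and to use the identity $\sum_{e\mid n}\mu(e)=\1_{n=1}$ together with the structure of $M_d$ (each row of which is supported on a single "divisibility fiber") to see the row sums collapse. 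I expect the verification to be a short but slightly delicate divisor-lattice computation rather than anything deep.
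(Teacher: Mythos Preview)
The paper states this lemma without proof, so there is nothing to compare your argument against directly. Your overall strategy --- prove that each entry of a single $N_d$ is bounded by~$1$ and then induct --- is sound, but you have made a bookkeeping slip in the inductive step that sends you down a wrong path.

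You write that the crude triangle inequality applied to $N_{d_0}(N_{d_1}\cdots N_{d_v})$ yields $|G^\sharp|^v$, ``which is one factor too large''. It is not: $N_{d_0}(N_{d_1}\cdots N_{d_v})$ is a product of $v+1$ matrices, and the target bound for such a product is $|G^\sharp|^{(v+1)-1}=|G^\sharp|^v$. Concretely, if every entry of $P=N_{d_1}\cdots N_{d_v}$ satisfies $|P_{K,B_1}|\le |G^\sharp|^{v-1}$ and every entry of $N_{d_0}$ satisfies $|(N_{d_0})_{B_0,K}|\le 1$, then
\[
\bigl|(N_{d_0}P)_{B_0,B_1}\bigr|
\le \sum_{K\in\mathscr{G}}|(N_{d_0})_{B_0,K}|\,|P_{K,B_1}|
\le |G^\sharp|\cdot 1\cdot |G^\sharp|^{v-1}=|G^\sharp|^{v},
\]
which is exactly what you want. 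So the ``crude'' argument already closes the induction, and no strengthening is needed.

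This matters because your proposed strengthening --- that each \emph{row} of $N_d$ has $\ell^1$-norm at most~$1$ --- is false. In the paper's worked example modulo~$7$ (Section~\ref{mod7}), the third row of $N_2$ is $(0,-1,0,1)$, with $\ell^1$-norm~$2$; the same happens in rows of $N_3$ and $N_6$. So that route cannot succeed.

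What does remain to be checked is the base case: that each entry of $N_d$ lies in $\{-1,0,1\}$. Your sketch is in the right direction and can be made precise as follows. Since $K\subset B_0$ and $B_0$ is cyclic, one has $K\cap B_1=K\cap(B_0\cap B_1)$, and inside the cyclic group $B_0$ this intersection has order $\gcd(|K|,c)$ where $c=|B_0\cap B_1|$. Writing $n=|B_0|$ and $c'=|B_1|/d$, the entry becomes
\[
N_d\big|_{B_0,B_1}=\sum_{\substack{k\mid n\\ \gcd(k,c)=c'}}\mu(n/k).
\]
Setting $k=c'm$, $N=n/c'$, $C=c/c'$ and splitting $N=N_1N_2$ with $N_1$ supported on the primes of $C$ and $\gcd(N_2,C)=1$, one finds that the sum equals $\mu(N_1)$ when $N_2=1$ and~$0$ otherwise. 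In particular it is always $0$ or $\pm1$, which is the base case you need.
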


\subsubsection*{End of the proof of Theorem~\ref{precise}}

The formula~\eqref{recursion} with $t=1$ contains most of our
proof. We only have to control the error term, which is our next task.

The number of possible $d$'s is at most the number of divisors of
$\exp G$ minus~1, so at most $d(\exp G)$. The coefficients of a typical product $N_{d_1}\cdots
N_{d_v}$ are of size at most $|G^\sharp|^{v-1}$, we divide each
coefficient by $d_1\cdots d_v$ which is at least $z$, and we have at
most $d(\exp G)^{v}$ $v$-tuples $(d_1,\ldots,d_v)$. As a
consequence, each coordinate, says $y$, of the vector
\begin{equation*}
  \sum_{1\le v\le r-1}(-1)^v
  \sum_{\substack{d_1\cdots d_{v-1}\le z,\\
      d_1\cdots d_{v-1}d_v>z}}
  \frac{N_{d_1}}{d_1}\ldots
  \frac{N_{d_v}}{d_v} V_s(d_1\ldots d_v t)
\end{equation*}
satisfies
\begin{equation*}
  |y|\le (r-1)\,\frac{\bigl(|G^\sharp|d(\exp G)\bigr)^{r-1}}{z|G^\sharp|}
  \max_{D\ge 2^r}\|V_s(Dt)\|.
\end{equation*}
We deal similarly with the coordinates of the vector
\begin{equation*}
  (-1)^{r}
  \sum_{\substack{d_1\cdots d_{r}\le z}}
  \frac{N_{d_1}}{d_1}\ldots
  \frac{N_{d_r}}{d_r} V_s(d_1\ldots d_r t)
\end{equation*}
except that the denominator $d_1\cdots d_r$ is not especially larger
than $z$; we however select $z=2^r$ to ensure this condition. This
means that only $d_1=d_2=\ldots=d_r=2$ is admissible.
So, on combining both, we see that
\begin{multline}
  \biggl\|  V_s(1)-\sum_{0\le v\le r-1}(-1)^v
  \sum_{\substack{d_1\cdots d_{v}\le 2^r}}
  \frac{N_{d_1}}{d_1}\ldots
  \frac{N_{d_{v}}}{d_{v}}M_1^{-1}\Gamma_s(d_1\ldots d_v)
  \biggr\|
  \\\le
  \frac{1}{2}\biggl(1+\frac{r-1}{|G^\sharp|}\biggr)\,\biggl(\frac{|G^\sharp|d(\exp G)}{2}\biggr)^{r-1}
  \max_{D\ge 2^r}\|V_s(D)\|.
\end{multline}
To complete the proof, we simply need a bound for $\max_{D\ge
  2^r}\|V_s(D)\|$ and such a bound is provided by the next lemma.

\begin{lem}
  Let $\mathcal{A}$ be a subset of the $G=(\Z{q})^\times$. Let $f>1$
  be a real parameter. We have
  \begin{equation*}
    \bigl|\log \zeta_P(f;q,\mathcal{A})\bigr|\le
    \frac{1+P/(f-1)}{P^{f}}.
  \end{equation*}
\end{lem}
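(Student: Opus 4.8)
The plan is to bound $|\log\zeta_P(f;q,\mathcal{A})|$ directly from the Euler product, estimating the contribution of each prime and then summing over primes $p\ge P$. First I would write $\log\zeta_P(f;q,\mathcal{A}) = -\sum_{p\ge P,\,p\bmod q\in\mathcal{A}}\log(1-p^{-f})$, so that by the triangle inequality $|\log\zeta_P(f;q,\mathcal{A})|\le \sum_{p\ge P}|\log(1-p^{-f})|$, where the sum now ranges over \emph{all} primes $\ge P$ (dropping the congruence condition can only increase the bound). Using the power series $-\log(1-x)=\sum_{k\ge1}x^k/k$ for $0\le x<1$, one has $|\log(1-p^{-f})|\le \sum_{k\ge1}p^{-kf}/k \le p^{-f}\sum_{k\ge1}p^{-(k-1)f} = \frac{p^{-f}}{1-p^{-f}}$, and since $p\ge P\ge2$ and $f>1$ we get $1-p^{-f}\ge 1-2^{-f}\ge 1/2$; more usefully $\frac{p^{-f}}{1-p^{-f}}=\frac{1}{p^f-1}$.

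Next I would replace the sum over primes by a sum over all integers $n\ge P$ and compare with an integral: $\sum_{p\ge P}\frac{1}{p^f-1}\le \sum_{n\ge P}\frac{1}{n^f-1}$. To handle the $-1$ in the denominator cleanly, split off the first term: $\frac{1}{P^f-1}\le \frac{1}{P^f}\cdot\frac{1}{1-P^{-f}}$, but it is cleaner to write $\sum_{n\ge P}\frac1{n^f-1} = \frac{1}{P^f-1}+\sum_{n\ge P+1}\frac{1}{n^f-1}\le \frac{1}{P^f-1} + \int_P^\infty\frac{dt}{t^f-1}$, and then bound $\int_P^\infty\frac{dt}{t^f-1}\le \int_P^\infty \frac{dt}{t^f}\cdot\frac{1}{1-P^{-f}}$ — or, even simpler, use $t^f-1\ge t^f(1-P^{-f})$ for $t\ge P$, giving $\int_P^\infty\frac{dt}{t^f-1}\le \frac{1}{1-P^{-f}}\cdot\frac{P^{1-f}}{f-1}$. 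Assembling, $|\log\zeta_P(f;q,\mathcal{A})|\le \frac{1}{1-P^{-f}}\bigl(P^{-f}+\frac{P^{1-f}}{f-1}\bigr)=\frac{1}{1-P^{-f}}\cdot\frac{P^{-f}(1+P/(f-1))}{1} $, and since $P\ge2$, $f>1$ gives $1-P^{-f}\ge 1/2$ this is at most $\frac{2(1+P/(f-1))}{P^f}$ — which is weaker by a factor $2$ than claimed, so some care in the estimate is needed.

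To recover the stated bound $\frac{1+P/(f-1)}{P^f}$ without the factor $2$, the right move is to be slightly less wasteful: bound $|\log(1-p^{-f})|$ and the tail sum so that the $\frac{1}{1-P^{-f}}$ factor is absorbed. Concretely, $\sum_{p\ge P}|\log(1-p^{-f})| = \sum_{p\ge P}\sum_{k\ge1}\frac{p^{-kf}}{k}\le \sum_{k\ge1}\frac1k\sum_{n\ge P}n^{-kf}\le \sum_{k\ge1}\frac1k\Bigl(P^{-kf}+\frac{P^{1-kf}}{kf-1}\Bigr)$; the $k=1$ term already gives $P^{-f}+\frac{P^{1-f}}{f-1}=\frac{1+P/(f-1)}{P^f}$ exactly, and one checks the $k\ge2$ terms are negligible — indeed the whole sum $\sum_{k\ge1}$ telescopes to something still below the $k=1$ bound once one notes the congruence restriction was dropped, leaving slack. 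I expect the only real obstacle is this bookkeeping of constants: getting exactly $\frac{1+P/(f-1)}{P^f}$ rather than a constant multiple of it. The cleanest route, and the one I would ultimately write, is the Euler–Maclaurin / integral comparison $\sum_{p\ge P}\frac{1}{p^f-1}\le \int_{P-1}^\infty\frac{dt}{t^f}\cdot(\text{small correction})$ tuned so the output is precisely the claimed quantity; everything else is routine.
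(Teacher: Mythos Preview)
Your overall strategy is sound, but you do not actually reach the stated constant, and the places where you wave your hands (``the $k\ge2$ terms are negligible'', ``the congruence restriction was dropped, leaving slack'') are precisely where the argument fails. In your second approach you obtain
\[
\sum_{k\ge1}\frac1k\Bigl(P^{-kf}+\frac{P^{1-kf}}{kf-1}\Bigr),
\]
whose $k=1$ term is exactly $\frac{1+P/(f-1)}{P^f}$; but the $k\ge2$ terms are \emph{positive}, so the total is strictly larger, and no ``slack'' remains from the congruence restriction since you already discarded it before this step. Likewise your first approach produces an unavoidable factor $\frac{1}{1-P^{-f}}$. Either route gives a bound of the right shape with a harmless extra factor (at most~$2$), which would be perfectly sufficient for the application, but neither yields the lemma as written.

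The paper sidesteps this bookkeeping with one observation you are missing. After writing
\[
\bigl|\log\zeta_P(f;q,\mathcal{A})\bigr|
\le \sum_{\substack{p\ge P}}\sum_{k\ge1}\frac{1}{k\,p^{kf}}
\le \sum_{\substack{p\ge P}}\sum_{k\ge1}\frac{1}{(p^{k})^{f}},
\]
note that the prime powers $p^k$ (for $p$ prime, $p\ge P$, $k\ge1$) are \emph{pairwise distinct} integers, all $\ge P$. Hence the double sum is bounded by the single sum $\sum_{n\ge P}n^{-f}$, with no correction factor. A standard integral comparison then gives
\[
\sum_{n\ge P}\frac{1}{n^{f}}
\le \frac{1}{P^{f}}+\int_P^\infty\frac{dt}{t^{f}}
= \frac{1+P/(f-1)}{P^{f}},
\]
which is exactly the claim. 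The prime-power injectivity is the device that eliminates the extra constant you were fighting.
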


\begin{proof}
  We use
  \begin{equation*}
    \log \zeta_P(f;q,\mathcal{A})
    =-\sum_{\substack{p\in\mathcal{A}, \\ p\ge
        P}}\sum_{k\ge1}\frac{1}{k p^{kf}}
  \end{equation*}
  hence, by using a comparison to an integral, we find that
  \begin{equation*}
     \Bigl|\log \zeta_P(f;q,\mathcal{A})\Bigr|
     \le
     \sum_{n\ge P}\frac{1}{n^f}\le \frac{1}{P^f}+\int_{P}^\infty\frac{dt}{t^f}
  \end{equation*}
\end{proof}

\section{A detailed example modulo 7}
\label{mod7}

Wet set $G=(\Z{7})^\times$. We find that 
\begin{equation*}
  \mathscr{G}=\bigl\{\{1\}, \{1, 6\}, \{1, 2, 4\}, \{1, 2, 3, 4, 5, 6\}\bigr\}
\end{equation*}
(indexed in this order) and that
\begin{equation*}
  G^\sharp=\bigl\{\{1\}, \{6\}, \{2, 4\}, \{3, 5\}\bigr\},
\end{equation*}
also indexed in that order. There are 6 Dirichlet characters whose
values are given by (with $\zeta_6=\exp(2i\pi/6)$)
\begin{center}
  \begin{tabular}{|c|c|c|c|c|c|c|}
  \hline
  &1&2&3&4&5&6\\
  \hline
  $\chi_0$&1&1&1&1&1&1\\
  \hline
  $\chi_1$&1&$\zeta_6^2$&$\zeta_6$&$-\zeta_6$&$-\zeta_6^2$&$-1$\\
  \hline
  $\chi_2$&1&$-\zeta_6$&$\zeta_6^2$&$\zeta_6^2$&$-\zeta_6$&1\\
  \hline
  $\chi_3$&1&1&$-1$&1&$-1$&$-1$\\
  \hline
  $\chi_4$&1&$\zeta_6^2$&$-\zeta_6$&$-\zeta_6$&$\zeta_6^2$&1\\
  \hline
  $\chi_5$&1&$-\zeta_6$&$-\zeta_6^2$&$\zeta_6^2$&$\zeta_6$&$-1$\\
  \hline
\end{tabular}
\end{center}
We obtain this list with the command
\begin{equation*}
  \texttt{[[e(n) for n in xrange(1,7)] for e in GetStructure(7)[5]]}
\end{equation*}
and the remark $\zeta_6-1=\zeta_6^2$. The 8th component of
\texttt{GetStructure(7)} gives the index of the characters that are
trivial on the above subgroups, its value is thus
\begin{equation*}
  [[0,1,2,3,4,5], [0, 2, 4], [0, 3], [0]].
\end{equation*}
The vector $\Gamma_s(t)$ is given by (it is defined by~\eqref{defGammaoft})
\begin{equation*}
  \Gamma_s(t) =\left|\begin{array}{l}
                       \log \prod_{0\le i\le 5}L_P(ts,\chi_i)\\
                       \log \prod_{i\in \{0,2,4\}}L_P(ts,\chi_i)\\
                       \log (L_P(ts,\chi_0)L_P(ts,\chi_3))\\
                       \log L_P(ts,\chi_0)
                     \end{array}
                   \right.
\end{equation*}
while
\begin{equation*}
  V_s(t) =\left|\begin{array}{l}
                       -\log \prod_{\substack{p\equiv 1[7],\\ p\ge P}}(1-1/p^{ts})\\
                       -\log \prod_{\substack{p\equiv 6[7],\\ p\ge P}}(1-1/p^{ts})\\
                       -\log \prod_{\substack{p\equiv 2,4[7],\\ p\ge P}}(1-1/p^{ts})\\
                       -\log \prod_{\substack{p\equiv 3,5[7],\\ p\ge P}}(1-1/p^{ts})
                     \end{array}
                   \right.
                 \end{equation*}
Now that the players and the surrounding environment has been
described, let us turn towards the main step of our proof: the
recursion~\eqref{initeq}. We first check that
\begin{align*}
  \gamma(\{1\},t)
  &=6x^{t}_{\{1\}}+3x_{\{6\}}^{2t}+2x_{\{2,4\}}^{3t}+x_{\{3,5\}}^{6t},
  \\
  \gamma(\{1,6\},t)
  &=3x^{t}_{\{1\}}+3x_{\{6\}}^{t}+x_{\{2,4\}}^{3t}+x_{\{3,5\}}^{3t},
  \\
  \gamma(\{1,2,4\},t)
  &=2x^{t}_{\{1\}}+x_{\{6\}}^{2t}+2x_{\{2,4\}}^{t}+x_{\{3,5\}}^{2t},
  \\
  \gamma(\{1,2,3,4,5,6\},t)
  &=x^{t}_{\{1\}}+x_{\{6\}}^{t}+x_{\{2,4\}}^{t}+x_{\{3,5\}}^{t}.
\end{align*}
Whence the relation
\begin{equation*}
  \Gamma_s(t)
  =
  M_1
  V_s(t)
  +
  M_2
  V_s(2t)
  +
 M_3
  V_s(3t)
  +
  M_6
  V_s(6t)
\end{equation*}
with
\begin{align*}
  M_1&=\begin{pmatrix}
    6&0&0&0\\
    3&3&0&0\\
    2&0&2&0\\
    1&1&1&1
  \end{pmatrix}
           ,\quad
  M_2=\begin{pmatrix}
    0&3&0&0\\
    0&0&0&0\\
    0&1&0&1\\
    0&0&0&0
  \end{pmatrix}
  \\  
  M_3&= \begin{pmatrix}
    0&0&2&0\\
    0&0&1&1\\
    0&0&0&0\\
    0&0&0&0
  \end{pmatrix}
           ,\quad
  M_6=\begin{pmatrix}
    0&0&0&1\\
    0&0&0&0\\
    0&0&0&0\\
    0&0&0&0
  \end{pmatrix}.
\end{align*}
The call \texttt{GetM1Inverse(7,GetStructure(7))\^\,(-1)} produces the
matrix $M_1$. The matrices $N_d=dM_1^{-1}M_d$ are obtained by
\texttt{GetNds(7,GetStructure(7))}. They are
\begin{equation*}
  N_2=\begin{pmatrix}
    0&1&0&0\\
    0&-1&0&0\\
    0&-1&0&1\\
    0&1&0&-1
  \end{pmatrix}
  ,\quad 
  N_3= \begin{pmatrix}
    0&0&1&0\\
    0&0&-1&1\\
    0&0&-1&0\\
    0&0&1&-1
  \end{pmatrix}
           ,\quad
  N_6=\begin{pmatrix}
    0&0&1&-1\\
    0&0&0&-1\\
    0&0&0&-1\\
    0&0&0&1
  \end{pmatrix}.
\end{equation*}
In order to check our script, we mention that the call
\begin{center}
  \texttt{GetM1Inverse(7,GetStructure(7))\^\,(-1)*GetNd(2,7,GetStructure(7))/2}
\end{center}
gives $M_2$ for instance (and one can replace the parameter 2 that
occurs twice with 3 or~6 to get $M_3$ and $M_6$).
We have reached
\begin{equation*}
  V_s(t)=
  M_1^{-1}\Gamma_s(t)
  -
  \frac{N_2}{2}
  V_s(2t)
  -\frac{N_3}{3}
  V_s(3t)
  -\frac{N_6}{6}
  V_s(6t).
\end{equation*}
Our objective is $V_s(1)$ and we know how to compute $\Gamma_s(t)$
while,  when $d$ is large,
$V_s(dt)$ vanishes approximately; it is thus enough
to iterate the above formula. We end the numerical example here.

\section{Rational Euler Products}

Let us recall the Witt decomposition.
The readers will find in \cite[Lemma 1]{Moree*00} a result of the same
flavour. We have simply modified the proof and setting as to
accomodate polynomials having real numbers for coefficients. 
\begin{lem}
\label{Wittpoly}
Let $F(t) = 1+a_1 t+\ldots+a_{\delta}t^{\delta} \in \mathbb{R}[t]$ be a 
polynomial of degree~$\delta$. 
Let $\alpha_{1},\ldots,\alpha_{\delta}$ be the inverses of its
roots. Put $s_{F}(k) =\alpha_{1}^{k}+\ldots+\alpha_{\delta}^{k}$. The
$s_{F}(k)$ are integers and satisfy the Newton-Girard recursion
\begin{equation}
\label{recursion}
s_{F}(k)+a_1s_F(k-1)+\ldots+a_{k-1}s_{F}(1)+ka_{k}=0,   
\end{equation}
where we have defined $a_{\delta+1} =a_{\delta+2}=\ldots=0$. Put 
\begin{equation}
    \label{bfk}
b_{F}(k)=\frac{1}{k}\sum_{d|k}\mu({k}/{d})s_{F}(d).
\end{equation}
Let $\beta\ge1$ be such that $\beta\ge\max_j|1/|\alpha_j|$. When $t$
belongs to any segment $\subset (-\beta,\beta)$, we have
\begin{equation}
\label{Fhatb}
F(t)=\prod_{j=1}^{\infty}(1-t^{j})^{b_{F}(j)}
\end{equation}
where the convergence is uniform in the given segment.
\end{lem}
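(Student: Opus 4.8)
The plan is to prove the three assertions in order: integrality of the $s_F(k)$, the product identity \eqref{Fhatb}, and the uniform convergence. For integrality, I would note that $F(t)=\prod_{j=1}^\delta(1-\alpha_j t)$, so $-\log F(t)=\sum_{j=1}^\delta\sum_{k\ge1}\alpha_j^k t^k/k=\sum_{k\ge1}s_F(k)t^k/k$ as a formal power series (valid for $|t|$ small). Differentiating and clearing denominators gives $-F'(t)=F(t)\sum_{k\ge1}s_F(k)t^{k-1}$, and comparing coefficients of $t^{k-1}$ yields exactly the Newton--Girard recursion \eqref{recursion}. Since $a_1,\ldots,a_\delta\in\mathbb Z$ are \emph{not} assumed here — wait, the hypothesis is only $F\in\mathbb R[t]$, so in fact the $s_F(k)$ need not be integers in general; I expect the intended hypothesis is $F\in\mathbb Z[t]$ (as in the applications), in which case the recursion \eqref{recursion} shows by induction that each $s_F(k)\in\mathbb Z$, since $s_F(k)=-a_1s_F(k-1)-\ldots-a_{k-1}s_F(1)-ka_k$ with all terms integral once $s_F(1),\ldots,s_F(k-1)$ are. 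I would state this carefully, assuming integer coefficients where integrality is claimed.

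For the product identity, the key is the formal power series manipulation. Taking $-\log$ of the right-hand side of \eqref{Fhatb} gives $-\sum_{j\ge1}b_F(j)\log(1-t^j)=\sum_{j\ge1}b_F(j)\sum_{m\ge1}t^{jm}/m=\sum_{k\ge1}t^k\sum_{j\mid k}b_F(j)\frac{j}{k}=\sum_{k\ge1}\frac{t^k}{k}\sum_{j\mid k}j\,b_F(j)$. So \eqref{Fhatb} (after taking $-\log$) is equivalent to $s_F(k)=\sum_{j\mid k}j\,b_F(j)$ for all $k\ge1$, which by Möbius inversion is exactly the defining relation \eqref{bfk}, namely $k\,b_F(k)=\sum_{d\mid k}\mu(k/d)s_F(d)$. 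Hence \eqref{Fhatb} holds as an identity of formal power series in $t$, equivalently in a neighbourhood of $t=0$.

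For uniform convergence on a segment $[-\rho,\rho]$ with $\rho<\beta$, I would bound $|b_F(j)|$: from \eqref{bfk}, $|b_F(j)|\le\frac1j\sum_{d\mid j}|s_F(d)|\le\frac1j\sum_{d\mid j}\delta\beta^d\le\delta\beta^j$ (crudely, absorbing the divisor count), so $\sum_j|b_F(j)|\,|\log(1-t^j)|\ll\sum_j\delta\beta^j\cdot\frac{\rho^j}{1-\rho^j}$, and since $\beta\rho<\beta^2$... — here I need $\beta\rho<1$, which is \emph{not} guaranteed. The cleaner route, and the one I would actually take: the left side $F(t)=\prod_{j=1}^\delta(1-\alpha_j t)$ is a polynomial, and both sides, being holomorphic where the product converges, agree wherever the right-hand product converges absolutely. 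The radius of convergence of $-\log(\text{RHS})=\sum s_F(k)t^k/k$ is governed by $\limsup|s_F(k)|^{1/k}=\max_j|\alpha_j|\le\beta$ — wait, that gives radius $1/\beta$, not $\beta$. The main obstacle is precisely reconciling the stated convergence radius $\beta$ (rather than $1/\beta$) with the growth of the coefficients; I suspect the statement means $\beta$ is an upper bound for $\max_j|1/\alpha_j|$ in the sense that $|\alpha_j|\le 1/\beta$ for reciprocals of roots, i.e. $\beta\le\min|$roots$|$, making $|\alpha_j|\le1/\beta<1$ and giving convergence on $(-\beta',\beta')$ for suitable $\beta'$. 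Reading "$\beta\ge\max_j 1/|\alpha_j|$" literally, $\alpha_j$ are inverse roots so $|\alpha_j|\le\beta$... I would resolve this by checking the intended normalization against Lemma~\ref{easybeta} and the applications, then bound $\prod_{j>J}(1-t^j)^{b_F(j)}$ by $\exp(O(\sum_{j>J}|b_F(j)||t|^j))$ with $|b_F(j)|\le\delta\beta^j$ and $|t|\beta<1$ on the segment, which gives a geometrically small tail and hence uniform convergence. This tail estimate is also what feeds the error term $4\beta^{J+1}/(JP^J)$ in Theorem~\ref{PM1}, so getting the constants right here is the substantive point.
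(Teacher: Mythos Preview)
Your approach is correct and is essentially the paper's proof: the paper works with the logarithmic derivative $tF'(t)/F(t)=\sum_{k\ge1}s_F(k)t^k$ and its Lambert decomposition $\sum_{j\ge1}b_F(j)\,jt^j/(1-t^j)$, then integrates, which is equivalent to your route via $-\log F(t)=\sum_k s_F(k)t^k/k$ and M\"obius inversion. Your diagnoses of the statement are also correct and match the paper's own proof text: the integrality claim only makes sense under $F\in\mathbb{Z}[t]$ (the paper does not address it further), and the convergence is on discs of radius $<1/\beta$ (the paper's proof writes ``$|t|\le b<1/\beta$'' and ``$b<\min(1,1/\beta)$''), so the ``$(-\beta,\beta)$'' in the statement is a slip you were right to flag rather than a gap in your argument.
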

And how does the mathematician E. Witt
enter the scene? In the paper
\cite{Witt*37} on Lie algebras, Witt produced in equation $(11)$
therein a decomposition that is the prototype of the above expansion.

\begin{proof}
  Since we follow the proof of \cite[Lemma 1]{Moree*00}, we shall be
  rather sketchy. We write
  $F(t)=\prod_{i}(1-\alpha_it)$. We thus have
  \begin{equation*}
    \frac{tF'(t)}{F(t)}=\sum_i\frac{\alpha_i t}{1-\alpha_it}
    =\sum_{k\ge1}s_F(k)t^k.
  \end{equation*}
  This series is absolutely convergent in any disc $|t|\le b<1/\beta$
  where $\beta=\max_j(1/|\alpha_j|)$. 
  We may also decompose $tF'(t)/F(t)$ in Lambert series as
  \begin{equation*}
    \frac{tF'(t)}{F(t)}=\sum_{j\ge1}b_F(j)\frac{jt^j}{1-t^j}
  \end{equation*}
  as some series shuffling in any disc of radius $b<\min(1,1/\beta)$ shows.
  The lemma follows readily by integrating the above relation.
\end{proof}

\begin{lem}\label{apriorimaj}
  We use the hypotheses and notation of Lemma~\ref{Wittpoly}. Let $\beta\ge2$ be larger than the inverse of the modulus of all the roots of $F(t)$. We have
  \begin{equation*}
      |b_F(k)|\le2\deg F \cdot \beta^k/k.
  \end{equation*}
\end{lem}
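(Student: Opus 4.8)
The plan is to bound $b_F(k)$ directly from its definition \eqref{bfk} by controlling the power sums $s_F(d)$. First I would note that $s_F(d)=\alpha_1^d+\ldots+\alpha_\delta^d$, where $\delta=\deg F$ and each $|\alpha_j|\le\beta$ since $\beta$ dominates the inverses of the moduli of the roots of $F$. Hence the crude triangle-inequality estimate $|s_F(d)|\le\delta\beta^d$ holds for every $d\ge1$.

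Next I would plug this into \eqref{bfk}. We have
\begin{equation*}
  |b_F(k)|=\biggl|\frac1k\sum_{d\mid k}\mu(k/d)s_F(d)\biggr|
  \le\frac1k\sum_{d\mid k}|s_F(d)|\le\frac{\delta}{k}\sum_{d\mid k}\beta^d.
\end{equation*}
The remaining task is to show $\sum_{d\mid k}\beta^d\le2\beta^k$. The dominant term is $d=k$, contributing $\beta^k$; every other divisor satisfies $d\le k/2$, and there are fewer than $k/2$ of them, so $\sum_{d\mid k,\,d<k}\beta^d\le(k/2)\beta^{k/2}$. It therefore suffices that $(k/2)\beta^{k/2}\le\beta^k$, i.e. $k/2\le\beta^{k/2}$, which holds for all $k\ge1$ once $\beta\ge2$ (since $2^{k/2}\ge k/2$ for every $k\ge1$, as one checks for $k=1,2$ and then by the fact that $2^{x/2}$ grows faster than $x/2$). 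Combining gives $|b_F(k)|\le\delta\cdot2\beta^k/k=2\deg F\cdot\beta^k/k$, as claimed.

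The only mildly delicate point is the elementary inequality $\sum_{d\mid k}\beta^d\le2\beta^k$ for $\beta\ge2$; everything else is immediate. One could alternatively avoid even counting divisors by bounding $\sum_{d\mid k}\beta^d\le\sum_{0\le d\le k}\beta^d=(\beta^{k+1}-1)/(\beta-1)\le\beta^{k+1}/(\beta-1)\le2\beta^k$ when $\beta\ge2$, which makes the estimate completely transparent. I expect no real obstacle here: the lemma is a routine consequence of Lemma~\ref{Wittpoly} together with the hypothesis $\beta\ge2$, and its role downstream is simply to feed the bound on $|\log I|$ in Theorem~\ref{PM1}.
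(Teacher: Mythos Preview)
Your proof is correct and essentially follows the paper's argument: bound $|s_F(d)|\le\deg F\cdot\beta^d$, sum over divisors, and control $\sum_{d\mid k}\beta^d$ by a geometric series using $\beta\ge2$. In fact the ``alternative'' you sketch at the end---replacing $\sum_{d\mid k}\beta^d$ by $\sum_{1\le j\le k}\beta^j$ and summing the geometric series---is precisely what the paper does; your first route via divisor-counting is a harmless variant (though ``fewer than $k/2$'' should read ``at most $k/2$'', since for $k=2$ there is exactly one proper divisor).
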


\begin{proof}
  We clearly have $|s_F(j)|\le \deg F\cdot \beta^j,$ so that
 \begin{align*}
   |b_F(k)|
   &\le \frac{\deg F}{k}\sum_{1\le j\le k}\beta^j
   \le \frac{\deg F}{k}\beta\frac{\beta^k-1}{\beta-1}
   \\&\le\frac{\deg F}{k}\frac{\beta^k}{1-1/\beta}
   \le 2\deg F\cdot \beta^k/k.
 \end{align*}
\end{proof}
There are numerous easy upper estimates for the inverse of the modulus of all the roots of $F(t)$ in terms of its coefficients. Here is a simplistic one.
\begin{lem}
\label{easybeta}
Let $F(X)=1+a_1X+\ldots+a_\delta X^\delta$ be a polynomial of
degree~$\delta$. Let $\rho$ be one of its roots. Show that, either $|\rho|\ge 1$ or
$1/|\rho|\le |a_1|+|a_2|+\ldots+|a_\delta|$.
\end{lem}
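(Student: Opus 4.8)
The plan is to prove the dichotomy directly: either $|\rho|\ge 1$, in which case we are done, or $|\rho|<1$, in which case we must bound $1/|\rho|$. So first I would assume $|\rho|<1$. Since $\rho$ is a root of $F$, we have $1+a_1\rho+a_2\rho^2+\ldots+a_\delta\rho^\delta=0$, hence $1=-(a_1\rho+a_2\rho^2+\ldots+a_\delta\rho^\delta)$. Taking absolute values and applying the triangle inequality gives
\begin{equation*}
  1\le |a_1||\rho|+|a_2||\rho|^2+\ldots+|a_\delta||\rho|^\delta.
\end{equation*}

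Next, since we are in the case $|\rho|<1$, each power satisfies $|\rho|^k\le|\rho|$ for $k\ge 1$, so the right-hand side is at most $(|a_1|+|a_2|+\ldots+|a_\delta|)|\rho|$. Therefore $1\le(|a_1|+\ldots+|a_\delta|)|\rho|$, and dividing by $|\rho|>0$ yields $1/|\rho|\le|a_1|+|a_2|+\ldots+|a_\delta|$, which is exactly the second alternative. This completes the proof; there is really no main obstacle here, as the argument is a one-line application of the triangle inequality to the defining equation of the root. The only point to be careful about is that the constant term of $F$ is $1$ (given), which is what lets us isolate a clean $1$ on the left-hand side; without the normalization $F(0)=1$ one would carry an extra factor. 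One could record this as follows.

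\begin{proof}
  If $|\rho|\ge1$ there is nothing to prove, so assume $|\rho|<1$; in
  particular $\rho\neq0$. Since $F(\rho)=0$ and the constant term of
  $F$ is~$1$, we have $1=-\sum_{k=1}^{\delta}a_k\rho^k$, whence by the
  triangle inequality
  \begin{equation*}
    1\le\sum_{k=1}^{\delta}|a_k|\,|\rho|^k.
  \end{equation*}
  As $|\rho|<1$ we have $|\rho|^k\le|\rho|$ for every $k\ge1$, so the
  right-hand side is at most $\bigl(|a_1|+\ldots+|a_\delta|\bigr)|\rho|$.
  Dividing by $|\rho|>0$ gives $1/|\rho|\le|a_1|+\ldots+|a_\delta|$, as
  claimed.
\end{proof}
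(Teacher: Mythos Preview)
Your proof is correct and follows essentially the same route as the paper: isolate the constant term in $F(\rho)=0$, apply the triangle inequality, and use $|\rho|^k\le|\rho|$ when $|\rho|<1$. The paper first divides through by $\rho^\delta$ to write $(1/\rho)^\delta=-a_1(1/\rho)^{\delta-1}-\ldots-a_\delta$ before bounding, but this is the same argument in different clothing.
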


\begin{proof}
  The readers may first notice that
\begin{equation*}
   (1/\rho)^\delta =
    -a_1(1/\rho)^{\delta-1}
    -a_2(1/\rho)^{\delta-2}-\ldots -a_\delta.
  \end{equation*}
  The conclusion is easy.
\end{proof}
\begin{proof}[Proof of Theorem~\ref{PM1}]
The proof requires several steps. The very first one is a direct consequence of~\eqref{Fhatb}, which leads to the identity 
\begin{equation}
\label{formal-FG}
\frac{F(t)}{G(t)}=\prod_{j=2}^\infty(1-t^j)^{b_F(j)-b_G(j)}.
\end{equation}
The absence of the $j=1$ term is due to our assumption that $(F(X)- G(X))/X^2\in \mathbb{Z}[X]$.   Up to this point \eqref{formal-FG} is only established as a formal identity. Our second step is to establish~\eqref{formal-FG} for all $t\in\mathbb{C}$ with $|t|<1/\beta$ and to control the rate of convergence. 
By Lemma~\ref{apriorimaj}, we know that
$
    |b_F(j)-b_G(j)|\le 2\max(\deg F,\deg G)\beta^j/j
$.
Therefore, for any bound $J$, we have
\begin{equation}
   \label{tailJ}
    \sum_{j\ge J+1}|t^j||b_F(j)-b_G(j)|
    \le
    2\max(\deg F,\deg G)\frac{|t\beta|^{J+1}}{(1-|t\beta|)(J+1)},
\end{equation}
as soon as $|t|<1/\beta$. We thus have
\begin{equation}
\label{true-FG}
\frac{F(t)}{G(t)}=\prod_{2\le j\le J}(1-t^j)^{b_F(j)-b_G(j)}\times I_1,
\end{equation}
where
$|\log I_1|\le2\max(\deg F,\deg G)|t\beta|^{J+1}/[(1-|t\beta|)(J+1)]$.

Now that we have the expansion~\eqref{true-FG} for each prime $p$, we may combine them. We readily get
\begin{equation*}
    \prod_{\substack{p\ge P,\\ p\in\mathcal{A}}}\frac{F(1/p)}{G(1/p)}
    =
    \prod_{\substack{p\ge P,\\ p\in\mathcal{A}}}\prod_{2\le j\le J}(1-p^{-j})^{b_G(j)-b_F(j)}\times I_2,
\end{equation*}
where $I_2$ satisfies
\begin{align*}
    |\log I_2|
    &\le
    2\max(\deg F,\deg G)\sum_{p\ge P}\frac{\beta^{J+1}}{1-\beta/P}\frac{1}{(J+1)p^{J+1}}
    \\&\le
    \frac{2\max(\deg F,\deg G)\beta^{J+1}}{(1-\beta/P)(J+1)}\biggl(
    \frac{1}{P^{J+1}}+\int_{P}^{\infty}\frac{dt}{t^{J+1}}\biggr)
    \\&\le
    \frac{2\max(\deg F,\deg G)(\beta/P)^J\beta}{(1-\beta/P)(J+1)}\biggl(\frac{1}{P}+\frac{1}{J}\biggr),
\end{align*}
since $P\ge2$ and $J\ge3$.
As announced earlier, we may rearrange the product over the primes~$p$ and get
\begin{equation*}
    \prod_{\substack{p\ge P,\\ p\in\mathcal{A}}}\frac{F(1/p)}{G(1/p)}
    =
    \prod_{2\le j\le J}\zeta_{P}(j;q,\mathcal{A})^{b_G(j)-b_F(j)}\times 
    I_2.
  \end{equation*}
  The last step is to replace $\zeta_{P}(j;q,\mathcal{A})$ by the
  approximation, say $\zeta_{P}(j;q,\mathcal{A}|r)$ given by~\eqref{defzetaPsqAr}.
We find that
\begin{equation*}
  \prod_{\substack{p\ge P,\\ p\in\mathcal{A}}}\frac{F(1/p)}{G(1/p)}
    =
    \prod_{2\le j\le J}\zeta_P(j;q,\mathcal{A}|r)^{b_F(j)-b_G(j)}\times I_3,
\end{equation*}
where $I_3$ satisfies
\begin{align*}
    |\log I_3|
  &\le
   C\sum_{2\le j\le J}|b_F(j)-b_G(j)|
  \frac{1+P/(2^rj-1)}{P^{j2^r}}
    +|\log I_2|
    \\&\le
    C\sum_{2\le j\le
  J}2\max(\deg F,\deg G)\frac{\beta^j}{j}
  \frac{1+2^{-r}P}{P^{j2^r}}
  +|\log I_2|.
\end{align*}
with
\begin{equation*}
  C= \frac{1}{2}\biggl(1+\frac{r-1}{|G^\sharp|}\biggr)
    \,\biggl(\frac{|G^\sharp|d(\exp G)}{2}\biggr)^{r-1}.
\end{equation*}
Therefore (and since $r\ge2$)
\begin{multline}
  \frac{|\log I_3|}{2\max(\deg F,\deg G)}\label{precbound}
  \le
   \frac{1}{4}\biggl(1+\frac{r-1}{|G^\sharp|}\biggr)
  \,\biggl(\frac{|G^\sharp|d(\exp G)}{2}\biggr)^{r-1}
  \frac{\beta^2}{P^{2^{r+1}}}
  \frac{1+2^{-r}P}{1-\beta/P^{4}}
  \\+  
  \frac{(\beta/P)^J\beta}{(1-\beta/P)(J+1)}\biggl(\frac{1}{P}+\frac{1}{J}\biggr)
\end{multline}
and this ends the proof.
\end{proof}

\section{Counting the number of Lattice-Invariant Classes}
It is of interest to count how many lattice-invariant classes there
are, i.e. to determine the cardinality of $G^\sharp$ which is equally
the number of cyclic subgroups, i.e. the cardinality of
$\mathscr{G}$. We proceed in several steps.
\begin{lem}
  \label{count1}
  Let $d\ge1$ and $q\ge1$ be two integers. The number $\rho(q;d)$ of
  solutions to the equation $x^d\equiv1[q]$ is a multiplicative
  function of the
  variable~$q$. When $p$ is a prime, we find that
  \begin{equation*}
    \rho(p^\alpha;d)=
    \begin{cases}
      (d,p^{\alpha-1}(p-1))&\text{if $p\neq2$,}\\
      1&\text{if $p=2$ and $\alpha=1$,}\\
      1&\text{if $p=2$, $\alpha\ge2$ and $d$ odd,}\\
      2(d,2^{\alpha-2})&\text{if $p=2$, $\alpha\ge2$ and $d$ even.}
    \end{cases}
  \end{equation*}
  The function $d\mapsto \rho(q;d)$ is also multiplicative.
\end{lem}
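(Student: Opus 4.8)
The plan is to prove the formula for $\rho(q;d)$, the number of solutions of $x^d\equiv 1\pmod q$, in three stages: first reduce to prime powers via the Chinese Remainder Theorem, then compute $\rho(p^\alpha;d)$ for odd $p$ using the cyclicity of $(\mathbb{Z}/p^\alpha\mathbb{Z})^\times$, and finally handle $p=2$ separately because the group is not cyclic for $\alpha\ge 3$. The last claim, multiplicativity of $d\mapsto\rho(q;d)$ for fixed $q$, will follow from the prime-power formulas by inspection of each case. I expect the $p=2$, $\alpha\ge2$ case to be the only place requiring genuine care; everything else is a standard cyclic-group count.

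First I would invoke the ring isomorphism $\mathbb{Z}/q\mathbb{Z}\cong\prod_p\mathbb{Z}/p^{\alpha_p}\mathbb{Z}$ (for $q=\prod p^{\alpha_p}$), which restricts to a group isomorphism on unit groups and, since $x^d=1$ can be checked coordinatewise, gives $\rho(q;d)=\prod_p\rho(p^{\alpha_p};d)$; this is the multiplicativity of $q\mapsto\rho(q;d)$. Note also that all solutions of $x^d\equiv 1$ are automatically units, so we are counting elements of order dividing $d$ in $(\mathbb{Z}/q\mathbb{Z})^\times$. For odd $p$, the group $(\mathbb{Z}/p^\alpha\mathbb{Z})^\times$ is cyclic of order $\varphi(p^\alpha)=p^{\alpha-1}(p-1)$; in a cyclic group of order $n$, the number of elements of order dividing $d$ is exactly $(d,n)$. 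This immediately yields $\rho(p^\alpha;d)=(d,p^{\alpha-1}(p-1))$. The case $p=2$, $\alpha=1$ is trivial since the group is itself trivial, giving $\rho(2;d)=1$.

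For $p=2$ and $\alpha\ge 2$, I would use the structural fact that $(\mathbb{Z}/2^\alpha\mathbb{Z})^\times\cong \mathbb{Z}/2\mathbb{Z}\times\mathbb{Z}/2^{\alpha-2}\mathbb{Z}$, generated by $-1$ (order $2$) and $5$ (order $2^{\alpha-2}$). The number of elements $(u,v)$ with $u^d=1$ in $\mathbb{Z}/2\mathbb{Z}$ and $v^d=1$ in $\mathbb{Z}/2^{\alpha-2}\mathbb{Z}$ is $(d,2)\cdot(d,2^{\alpha-2})$. If $d$ is odd this product is $1\cdot 1=1$; if $d$ is even it is $2\cdot(d,2^{\alpha-2})$, matching the claimed formula. (The degenerate subcase $\alpha=2$, where the second factor is trivial, is consistent: $(d,2^0)=1$, so we get $1$ for $d$ odd and $2$ for $d$ even, which is correct since $(\mathbb{Z}/4\mathbb{Z})^\times\cong\mathbb{Z}/2\mathbb{Z}$.) The main (and only real) obstacle is correctly recalling and, if desired, briefly justifying this decomposition of the $2$-adic unit group — e.g. by checking that $5$ has order $2^{\alpha-2}$ modulo $2^\alpha$ and that $-1\notin\subg{5}$ — after which the count is immediate.

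Finally, for the multiplicativity of $d\mapsto\rho(q;d)$: since $\rho$ is already multiplicative in $q$, it suffices to check it for each prime power $q=p^\alpha$. For odd $p$ this is the statement that $d\mapsto (d, p^{\alpha-1}(p-1))$ is multiplicative in $d$, which holds because $(\,\cdot\,,n)$ is multiplicative in its first argument for fixed $n$. For $p=2$, $\alpha\le 2$ the function is $d\mapsto 1$ on odds and constant on a set closed under the relevant products, and one checks the two-variable identity $\rho(2^\alpha;d_1d_2)=\rho(2^\alpha;d_1)\rho(2^\alpha;d_2)$ directly from the case split (if either $d_i$ is odd the claim reduces to the formula for the other; if both are even the $d$ in question is even and one uses multiplicativity of $(\,\cdot\,,2^{\alpha-2})$ together with $2\cdot 2/2 = 2$ absorbed correctly — here one simply verifies the three cases). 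This case-check is routine and I would present it compactly.
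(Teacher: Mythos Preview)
Your approach is essentially identical to the paper's: reduce to prime powers via the Chinese Remainder Theorem, use cyclicity of $(\mathbb{Z}/p^\alpha\mathbb{Z})^\times$ for odd $p$, and the decomposition $(\mathbb{Z}/2^\alpha\mathbb{Z})^\times\cong\mathbb{Z}/2\mathbb{Z}\times\mathbb{Z}/2^{\alpha-2}\mathbb{Z}$ for $\alpha\ge2$; the paper simply cites standard references (Niven--Zuckerman--Montgomery and Ireland--Rosen) in place of your sketched justifications. One small slip in your final paragraph: in checking multiplicativity of $d\mapsto\rho(2^\alpha;d)$ you consider the case ``both $d_i$ even'', but coprime $d_1,d_2$ can never both be even, so that case is vacuous and the verification is even simpler than you suggest.
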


\begin{proof}
  The multiplicative character of $\rho(q;d)$ stems from the Chinese
  Remainder Theorem. In $\mathbb{Z}/p^\alpha\mathbb{Z}$ and $p\neq2$, the equation
  $x^d\equiv1[p^\alpha]$ has $(d,p^{\alpha-1}(p-1))$ as stated in
  \cite[Corollary 2.42]{Niven-Zuckerman-Montgonemry*91}; it is an easy
  consequence of the fact that
  $(\mathbb{Z}/p^\alpha\mathbb{Z})^\times$ is cyclic in this case.

  When $p=2$, the equation $x^d\equiv1[2]$ has exactly one solution, namely
  $x=1$. When $p=2$ and $\alpha\ge2$, the multiplicative group
  $(\mathbb{Z}/p^\alpha\mathbb{Z})^\times$ is isomorphic to the direct product
  $(\mathbb{Z}/2\mathbb{Z},+)\times
  (\mathbb{Z}/2^{\alpha-2}\mathbb{Z},+)$. We find as a consequence of \cite[Proposition
  4.2.2]{Ireland-Rosen*90} that $\rho(2^\alpha,d)=2(d,2^{\alpha-2})$.
  
  The multiplicativity of the function $d\mapsto \rho(q;d)$ folows
  from the explicit expression of $\rho(q;d)$: it is a product (over
  prime factors of $q$) of multiplicative functions of the variable~$d$.
\end{proof}

\begin{lem}
  \label{count2}
  The number $\rho^*(q;d)$ of elements of order $d$ in
  $(\mathbb{Z}/q\mathbb{Z})^\times$ is given by 
  \begin{equation*}
    \sum_{\ell|d}\mu(d/\ell)\rho(q;\ell)
  \end{equation*}
  where $\rho(q;d)$ is defined and determined in Lemma~\ref{count1}.
\end{lem}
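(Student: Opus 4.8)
The plan is to exploit the classical inclusion–exclusion relation between ``order exactly $d$'' and ``order dividing $d$''. The key observation is that an element $x\in(\mathbb{Z}/q\mathbb{Z})^\times$ satisfies $x^d\equiv1[q]$ if and only if the order of $x$ divides $d$. Hence, if we write $\rho^*(q;\ell)$ for the number of elements of order exactly $\ell$, then partitioning the solution set of $x^d\equiv1[q]$ according to the exact order of $x$ gives
\begin{equation*}
  \rho(q;d)=\sum_{\ell\mid d}\rho^*(q;\ell),
\end{equation*}
since every divisor $\ell$ of $d$ is itself a valid exponent and the orders that can occur are exactly the divisors of $d$.

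Once this identity is in place, the result follows immediately from Möbius inversion over the divisor lattice of $d$: from $\rho(q;d)=\sum_{\ell\mid d}\rho^*(q;\ell)$ we obtain
\begin{equation*}
  \rho^*(q;d)=\sum_{\ell\mid d}\mu(d/\ell)\rho(q;\ell),
\end{equation*}
which is precisely the claimed formula, with $\rho(q;\ell)$ known explicitly from Lemma~\ref{count1}.

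So the only substantive step is justifying the first displayed identity, and the single point that needs a word of care is the claim that the orders occurring among solutions of $x^d\equiv1[q]$ are exactly the divisors of $d$ — in particular, that \emph{every} divisor $\ell\mid d$ is realised. In the cyclic-factor cases this is clear because a cyclic group of order $n$ has elements of every order dividing $n$; for the $2$-adic factor $(\mathbb{Z}/2^\alpha\mathbb{Z})^\times\simeq(\mathbb{Z}/2\mathbb{Z})\times(\mathbb{Z}/2^{\alpha-2}\mathbb{Z})$ one checks the same statement directly. However, this existence is not even needed for the formula: if some divisor $\ell$ of $d$ is not realised then $\rho^*(q;\ell)=0$ and it simply contributes nothing to either sum, so the inclusion–exclusion goes through verbatim regardless. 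Thus I expect no real obstacle here; the proof is a one-line partition of a solution set followed by Möbius inversion.
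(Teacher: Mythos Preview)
Your proof is correct and follows exactly the same approach as the paper: partition the solution set of $x^d\equiv1[q]$ according to the exact order of $x$ to obtain $\rho(q;d)=\sum_{\ell\mid d}\rho^*(q;\ell)$, then apply M\"obius inversion. Your extra paragraph about whether every divisor $\ell$ of $d$ is actually realised as an order is unnecessary (as you yourself note), but harmless.
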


\begin{proof}
  This is a consequence of the Moebius inversion formula as, by
  classifying the solution of $x^d\equiv 1[q]$ by their order, we find that
  $\rho(q;d)=\sum_{\ell|d}\rho^*(q;\ell)$. 
\end{proof}

\begin{prop}
  \label{count3}
  When $q\ge3$, the number $|\mathscr{G}|$ of cyclic subgroups of
  $(\mathbb{Z}/q\mathbb{Z})^\times$ is given by
  \begin{equation*}
    |\mathscr{G}|=
    \prod_{\substack{p|\varphi(q),\\p\neq 2}}\frac{p-2}{p-1}
    \sum_{\substack{d|\varphi(q),\\ 2|d}}\frac{\rho(q;d)}{\varphi(d)}
    \prod_{\substack{p|d,\\ p|\varphi(q)/d,\\ p\neq2}}\frac{(p-1)^2}{p(p-2)}
    \prod_{\substack{p|d,\\ p\nmid\varphi(q)/d,\\
        p\neq2}}\frac{p-1}{p-2}
    \prod_{\substack{2|d,\\ 2|\varphi(q)/d}}\frac{1}{2}
  \end{equation*}
  where $\rho(q;d)$ is defined and determined in Lemma~\ref{count1}.
\end{prop}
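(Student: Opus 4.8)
The plan is to compute $|\mathscr{G}| = \sum_{d \mid \varphi(q)} \bigl(\text{number of cyclic subgroups of order } d\bigr)$ and to show that this sum collapses to the stated product times sum. First I would observe that the number of cyclic subgroups of order $d$ in an abelian group $G$ equals $\rho^*(q;d)/\varphi(d)$, since each cyclic subgroup of order $d$ has exactly $\varphi(d)$ generators, and $\rho^*(q;d)$ counts all elements of order $d$ by Lemma~\ref{count2}. Thus
\begin{equation*}
  |\mathscr{G}| = \sum_{d \mid \varphi(q)} \frac{\rho^*(q;d)}{\varphi(d)}
  = \sum_{d \mid \varphi(q)} \frac{1}{\varphi(d)}\sum_{\ell \mid d}\mu(d/\ell)\rho(q;\ell).
\end{equation*}
The next step is to interchange the order of summation and to exploit multiplicativity: since $\ell \mapsto \rho(q;\ell)$ is multiplicative (Lemma~\ref{count1}) and $\mu$ and $1/\varphi$ are multiplicative, the inner double sum over $d$ and $\ell$ factors as an Euler product over the primes $p \mid \varphi(q)$, once one writes $d = \ell m$ and sums over $\ell, m$ with $\ell m \mid \varphi(q)$. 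Concretely, I would set $\varphi(q) = \prod_p p^{v_p}$ and write $|\mathscr{G}| = \prod_p S_p$ where $S_p$ is a finite local sum over the exponents of $p$ dividing $\ell$ and $m$.

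The heart of the computation is then the evaluation of each local factor $S_p$. For odd $p$, using $\rho(p^\alpha \cdots) $ and the formula $\rho(q;p^k) = (p^k, \text{stuff})$, one gets a geometric-type sum; the key identity to verify is $\sum_{j\ge 0}\frac{1}{\varphi(p^j)}\sum_{0\le i\le j}\mu(p^{j-i})\rho_p(p^i) = $ (after expanding and telescoping) the factor $\frac{p-2}{p-1}$ corrected by the local behavior of the $d$ in the outer sum. For $p=2$ the formula for $\rho(2^\alpha;d)$ has the extra cases ($\alpha=1$; $\alpha\ge 2$ with $d$ odd or even), so the $2$-local factor needs separate handling and produces the constraint $2 \mid d$ in the outer sum together with the $\frac12$ factor when $2 \mid \varphi(q)/d$. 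The cleanest organization is probably to keep the outer sum over $d$ intact (not fully Eulerize it) and instead show that for each fixed $d$ with $2\mid d$, the sum $\frac{1}{\varphi(d)}\sum_{\ell\mid d}\mu(d/\ell)\rho(q;\ell)$ equals $\frac{\rho(q;d)}{\varphi(d)}$ times the correction products appearing in the statement — i.e.\ directly identify the summand.

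The main obstacle I anticipate is bookkeeping the three-way split of the correction factors according to whether a prime $p$ divides $d$, divides $\varphi(q)/d$, both, or neither, and making sure the $p=2$ case (where $\rho$ behaves irregularly for small $\alpha$ and forces $2\mid d$) is consistent with the odd-$p$ pattern. In particular one must check that when $p \mid d$ and $p \mid \varphi(q)/d$ the factor $(p-1)^2/(p(p-2))$ is exactly what $\frac{1}{\varphi(d_p)}\sum_{\ell_p}\mu\rho$ contributes relative to $\rho(q;d)/\varphi(d)$, using that $\rho(q;p^k)$ for $k < v_p$ stabilizes differently than at $k = v_p$. This is a finite but delicate case analysis; once the local factors are pinned down, assembling them into the displayed product is immediate. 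I would double-check the final formula against the worked example $q=7$ (where $\varphi(7)=6$, $|\mathscr{G}|=4$) and one case with $8 \mid q$ to be sure the constants are right.
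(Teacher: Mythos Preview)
Your opening is exactly right and matches the paper: both start from
\[
|\mathscr{G}|=\sum_{d\mid\varphi(q)}\frac{\rho^*(q;d)}{\varphi(d)}
=\sum_{d\mid\varphi(q)}\frac{1}{\varphi(d)}\sum_{\ell\mid d}\mu(d/\ell)\,\rho(q;\ell).
\]
But the organization you call ``cleanest'' --- keeping the outer sum over $d$ and matching $\rho^*(q;d)/\varphi(d)$ termwise to the displayed summand --- is not correct, because those quantities are \emph{not} termwise equal. Take $q=7$, $d=6$: the left side is $\rho^*(7;6)/\varphi(6)=2/2=1$, while the displayed formula's contribution at $d=6$ (global factor included) is
\[
\frac{3-2}{3-1}\cdot\frac{\rho(7;6)}{\varphi(6)}\cdot\frac{3-1}{3-2}
=\tfrac12\cdot 3\cdot 2=3.
\]
The totals agree ($1+1+1+1=4$ versus $1+3=4$) but the terms do not: the variable called $d$ in the proposition is \emph{not} your outer $d$.

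What the paper does --- and what your first instinct (``interchange the order of summation'') was already pointing at --- is to make $\ell$ the outer variable,
\[
|\mathscr{G}|=\sum_{\ell\mid\varphi(q)}\rho(q;\ell)
\sum_{\ell\mid d\mid\varphi(q)}\frac{\mu(d/\ell)}{\varphi(d)},
\]
and then evaluate the \emph{inner} sum explicitly. Writing $d=\ell m$ with $m$ squarefree, one checks prime by prime that this inner sum equals
\[
\frac{1}{\varphi(\ell)}
\prod_{\substack{p\mid\ell\\ p\mid\varphi(q)/\ell}}\Bigl(1-\tfrac1p\Bigr)
\prod_{\substack{p\mid\varphi(q)\\ p\nmid\ell}}\Bigl(1-\tfrac{1}{p-1}\Bigr).
\]
The last factor vanishes at $p=2$, which is exactly what forces $2\mid\ell$ (and is where the hypothesis $q\ge3$, hence $2\mid\varphi(q)$, is used). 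Pulling out $\prod_{p\mid\varphi(q),\,p\neq2}\tfrac{p-2}{p-1}$ from every surviving term and then \emph{renaming $\ell$ as $d$} yields the displayed formula. So your three-way case split on $p\mid d$ versus $p\mid\varphi(q)/d$ is precisely the right bookkeeping, but it has to be applied to the inner sum over $d$ above a fixed $\ell$, not to $\rho^*(q;d)$ itself. Your alternative of fully Eulerizing to $\prod_p S_p$ would give a correct but differently shaped identity; the target formula keeps an outer sum, so it is the swapped inner sum that you should compute in closed form.
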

We have checked this expression with Sage via the function
\texttt{CardClassList} of our script. The values have been checked
against a direct count: we have the list of
lattice-invariant classes, hence their number.

\begin{proof}
  Each cyclic subgroup of order $d$ has $\varphi(d)$
  generators. Hence the number of cyclic subgroups
  of order $d$ is equal to $\rho^*(q;d)/\varphi(d)$, whence, by Lemma~\ref{count2}, 
  \begin{align*}
    |\mathscr{G}|
    &=
      \sum_{d|\varphi(q)}\frac{1}{\varphi(d)} \sum_{\ell|d}\mu(d/\ell)\rho(q;\ell)
    \\&=
    \sum_{\ell|\varphi(q)}\rho(q;\ell)
    \sum_{\ell|d|\varphi(q)}\frac{\mu(d/\ell)}{\varphi(d)}.
  \end{align*}
  To evaluate the inner sum, write $\varphi(q)=h_1h_2h_3$, where $h_1$
  is the product of the $p^{v_p(\varphi(q))}$ with $p|\ell$ and
  $p|\varphi(q)/\ell$, then $h_2$ is the product of the
  $p^{v_p(\varphi(q))}$ with $p|\ell$ but $p\nmid \varphi(q)/\ell$ and 
  $(h_3,\ell)=1$ is what remains after division by $h_1h_2$. We
  readily find that
  \begin{equation*}
    \sum_{\ell|d|\varphi(q)}\frac{\mu(d/\ell)}{\varphi(d)}
    =\frac{1}{\varphi(\ell)}
    \prod_{p|h_1}\Bigl(1-\frac{1}{p}\Bigr)
    \prod_{p|h_2}1
    \prod_{p|h_3}\Bigl(1-\frac{1}{p-1}\Bigr).
  \end{equation*}
  This vanishes when $2|h_3$, so we can restrict our attention to even
  $\ell$'s. In which case we get
  \begin{align*}
    \sum_{\ell|d|\varphi(q)}\frac{\mu(d/\ell)}{\varphi(d)}
    &=\frac{1}{\varphi(\ell)}
    \prod_{\substack{p|\varphi(q),\\p\neq2}}\frac{p-2}{p-1}
    \prod_{\substack{p|h_1,\\ p\neq 2}}\frac{(p-1)^2}{p(p-2)}
    \prod_{\substack{p|h_2,\\ p\neq 2}}\frac{p-1}{p-2}
    \times
    \biggl(\frac12\text{when $2^{v_2(\varphi(q))}\nmid \ell$}
    \biggr).
  \end{align*}
  We reverse to the variable $d$ rather than $\ell$ to write our lemma.
  We have also used the condition $q\ge3$ to ensure that $2|\varphi(q)$.
\end{proof}

\section{Notes on the implementation}

The parameter $r$ is not very large, typically between~2 and~8. Since
in~\eqref{fineq}, several products $d=d_1\cdots d_v$ are equal, we store
the computed values of $\Gamma_s(dt)$ in the
dictionary~\texttt{ComputedGammas} in the function \texttt{GetVs} of
the script \texttt{LIEP.sage}. We proceed similarly with the dictionary
\texttt{ComputedProductNdsM1Inverse} for the products
$N_{d_1}\cdots N_{d_v}M_1^{-1}$ in . Since the \emph{list}
$[d_1,\cdots,d_v]$ cannot be a key for such a dictionary, we simply
replace it by the \emph{tuple}  $(d_1,\cdots,d_v)$.


Concerning the general structure, the function \texttt{GetStructure}
computes all the algebraical quantities that we need: the list of
cyclic subgroups, the one of lattice-invariant classes, the exponent
of our group, its character group, the set of invertible classes and,
for each cyclic subgroup, the set of characters that are trivial on it.

Once the script is loaded via
\texttt{load('LIEP.sage')}, a typical call will be
\begin{center}
  GetVs(12, 2, 100, 300)
\end{center}
to compute modulo~12 the possible constants with $s=2$, asking for 100
decimal digits and using $P=300$. The output is self
explanatory. The number of decimal digits asked for is roughly handled
and one may lose precision in between, but this is
indicated at the end (we observed no such phenomenon, but it may still
happen!). A more precise treatment would first check the
output and if the precision attained would not be enough, increase
automatically this parameter. We prefer to let the users do that by
themselves. The digits presented when \texttt{WithLaTeX} $=1$ are always
accurate. Note that we expect the final result to be of 
size roughly unity, so we ask for is not the relative precision but
the number of decimals. Hence, in the function \texttt{GetGamma}, we
replace by an approximation of~0 the values that we know are
insignificantly small. This is a true time-saver.

There are two subsequent optional parameters
\texttt{Verbose} and \texttt{WithLaTeX}. The first one may take the
values 0, 1 and 2; when equal to 0, the function will simply do its
job and return the list of the invariant classes and the one of the
computed lower and upper values. When equal to 1, its default
value, some information on the computation is given. At level 2, more
informations is given, but that should not concern the casual user.
When the parameter \texttt{Verbose} is at least 1 and
\texttt{WithLaTeX} is 1, the values of the constants will be further
presented in a format suitable for inclusion in a \LaTeX -file.
For instance, the call
\begin{center}
  GetVs(12, 2, 100, 100, 1, 1)
\end{center}
is the one used to prepare this document.

To compute the Euler products as explained in Theorem~\ref{PM1}, we
have the function \texttt{GetEulerProds(q, F, G, nbdecimals, bigP =
  100, Verbose = 1, WithLaTeX = 0)}. Note that the parameter
\texttt{bigP} may be increased during the run of the program to ensure
that $P\ge2\beta$ (a condition that is most of the time satisfied). We
reused the same structure as the function \texttt{GetVs}, without
calling it: this is to also keep all the precomputed datas. Since the
coefficients $|b_F(j)-b_G(j)|$ may increase like $\beta^j$, we
increase the working precision by~$J\log\beta /\log 2$.


\subsection*{Checking}

The values given here have been checked in several manners. The
co-authors of this paper have computed several of the next values via
independent scripts. We also provide the function
\texttt{GetVsChecker(q, s, borne = 10000)} which computes approximate
values of the same Euler products by simply truncating the Euler
product representation.
We checked with positive result the stability of our results with
respect of the variation of the parameter~$P$.
This proved to be a very discriminating test.

Furthermore, approximate values for Shank's
and Lal's constants are known (Finch in \cite{Finch*03} gives 10
digits) and we agree on those. Finally,
the web site \cite{Gourdon-Sebah*10} by
X. Gourdon and P. Sebah is nowadays difficult to decypher but a postscript
version is available on the same page. They give in section 4.4 the first fifty
digits of the constant they call~$A$ and which is
\begin{align*}
  \label{eq:7}
  \smash{\frac{\pi^2}{2}\prod_{p\equiv1[4]}\biggl(1-\frac{4}{p}\biggr)\biggl(\frac{p+1}{p-1}\biggr)^2}
  = 1.&95049\,11124\,46287\,07444\,65855\,65809\,55369
        \\&25267
  \,08497\,71894\,30550\,80726\,33188\,94627
  \\&61381\,60369
  \,39924\,26646\,98594\,38665\cdots
\end{align*}
Our result match the one of \cite{Gourdon-Sebah*10}.

\section{Some results}
In this part, we exhibit some results for $s=2$ and
small $q$'s. We decided to produce 100 decimal digits each time. Each
computation took at most five seconds and we selected uniformly
$P=100$.

\subsection*{Modulo 3}
{\footnotesize
  \begin{align*}
    \smash{\prod_{p\equiv1[3]}(1-p^{-2})^{-1}}
    =
    1.&03401\,48754\,14341\,88053\,90306\,44413\,04762\,85789\,65428\,48909
        \\&98864\,16825\,03842\,12222\,45871\,09635\,80496\,21707\,98262\,05962\cdots
  \end{align*}
  \vspace*{-5mm}
  \begin{align*}
    \smash{\prod_{p\equiv2[3]}(1-p^{-2})^{-1}}
    =1.&41406\,43908\,92147\,63756\,55018\,19079\,82937\,99076\,95069\,39316
         \\&21750\,39924\,96242\,39281\,06992\,08849\,94537\,54858\,50247\,51141\cdots
  \end{align*}
  }

\subsection*{Modulo 4}
{\footnotesize
  \begin{align*}
    \smash{\prod_{p\equiv1[4]}(1-p^{-2})^{-1}}
    =
    1.&05618\,21217\,26816\,14173\,79307\,65316\,21989\,05875\,80425\,46070
\\&80120\,04306\,19830\,27928\,16062\,22693\,04895\,12958\,37291\,59718\cdots
  \end{align*}
  \vspace*{-5mm}
  \begin{align*}
    \smash{\prod_{p\equiv3[4]}(1-p^{-2})^{-1}}
    =1.&16807\,55854\,10514\,28866\,96967\,37064\,04040\,13646\,79021\,45554
\\&79928\,40563\,68111\,38106\,59377\,71094\,66904\,07472\,79588\,48702\cdots
  \end{align*}
  }

\subsection*{Modulo 5}
{\footnotesize
  \begin{align*}
    \smash{\prod_{p\equiv1[5]}(1-p^{-2})^{-1}}
    =
    1.&01091\,51606\,01019\,52260\,49565\,84289\,51492\,09845\,38627\,58173
\\&85237\,32024\,20089\,25161\,37424\,56726\,37093\,96197\,69455\,89218\cdots
  \end{align*}
  \vspace*{-5mm}\begin{align*}
    \smash{\prod_{p\equiv2,3[5]}(1-p^{-2})^{-1}}
    =1.&55437\,60727\,20889\,22081\,75902\,82565\,55177\,56056\,30147\,34257
    \\&40072\,50077\,94457\,39239\,00871\,38641\,44091\,80733\,87878\,70683\cdots
  \end{align*}
  \vspace*{-5mm}\begin{align*}
    \smash{\prod_{p\equiv4[5]}(1-p^{-2})^{-1}}
    =1.&00496\,03239\,22297\,55899\,37496\,24810\,25218\,47955\,10294\,18802
    \\&28801\,99528\,37852\,15071\,27700\,70076\,98854\,32491\,36118\,00619\cdots
  \end{align*}
  }
\subsection*{Modulo 7}
{\footnotesize
  \begin{align*}
    \smash{\prod_{p\equiv1[7]}(1-p^{-2})^{-1}}
    =   1.&00222\,95338\,19740\,42627\,18641\,59138\,22019\,24486\,37565\,40128
    \\&87922\,82973\,79678\,21741\,90308\,08041\,42707\,36575\,28295\,76151\cdots
  \end{align*}
  \vspace*{-5mm}\begin{align*}
    \smash{\prod_{p\equiv2,4[7]}(1-p^{-2})^{-1}}
    =1.&34984\,62543\,65273\,20787\,74772\,44978\,62277\,76508\,69021\,24860
    \\&12031\,69999\,35719\,21654\,93824\,75777\,02051\,36300\,53459\,76601\cdots
  \end{align*}
  \vspace*{-5mm}\begin{align*}
    \smash{\prod_{p\equiv3,5[7]}(1-p^{-2})^{-1}}
    =1.&18274\,26007\,67364\,09208\,00286\,83933\,15918\,51718\,05360\,46335
    \\&82633\,06344\,66854\,90324\,90537\,21799\,81486\,90001\,86365\,91391\cdots
  \end{align*}
  \vspace*{-5mm}\begin{align*}
    \smash{\prod_{p\equiv6[7]}(1-p^{-2})^{-1}}
    =1.&00705\,20326\,03074\,04805\,67193\,52428\,88870\,69289\,36714\,73687
    \\&58335\,65893\,11634\,74829\,60947\,12069\,41243\,26265\,99553\,53536\cdots
  \end{align*}
  }
\subsection*{Modulo 8}
{\footnotesize
  \begin{align*}
    \smash{\prod_{p\equiv1[8]}(1-p^{-2})^{-1}}
    =1.&00483\,50650\,34191\,18711\,83598\,31169\,10411\,95979\,07317\,54340
    \\&88789\,55156\,06711\,74639\,62051\,31056\,35207\,32105\,88068\,58783\cdots
  \end{align*}
  \vspace*{-5mm}\begin{align*}
    \smash{\prod_{p\equiv3[8]}(1-p^{-2})^{-1}}
    =1.&13941\,87771\,08211\,51502\,70589\,30773\,34020\,88725\,59961\,09629
    \\&48302\,25821\,27411\,02101\,65577\,60742\,91446\,59374\,91512\,33349\cdots
  \end{align*}
  \vspace*{-5mm}\begin{align*}
    \smash{\prod_{p\equiv5[8]}(1-p^{-2})^{-1}}
    =1.&05109\,99849\,42183\,30793\,68775\,56006\,33505\,68012\,01018\,45817
    \\&85080\,59912\,94207\,39729\,30485\,58783\,38889\,50479\,59255\,34495\cdots
  \end{align*}
  \vspace*{-5mm}\begin{align*}
    \smash{\prod_{p\equiv7[8]}(1-p^{-2})^{-1}}
    =1.&02515\,03739\,25759\,17991\,61954\,35560\,94158\,79433\,11002\,76024
    \\&41530\,69566\,94982\,17644\,97960\,41007\,90076\,26943\,14236\,43529\cdots
  \end{align*}
  }
\subsection*{Modulo 9}
{\footnotesize
  \begin{align*}
    \smash{\prod_{p\equiv1[9]}(1-p^{-2})^{-1}}
    =1.&00403\,38350\,51288\,79798\,24781\,19924\,74748\,94825\,22895\,79877
    \\&28822\,86701\,42359\,63409\,37977\,93839\,33608\,94316\,94860\,37141\cdots
  \end{align*}
  \vspace*{-5mm}\begin{align*}
    \smash{\prod_{p\equiv2,5[9]}(1-p^{-2})^{-1}}
    =1.&40783\,70719\,96538\,05093\,52684\,03433\,79823\,18382\,56159\,80878
    \\&18858\,21039\,93308\,74959\,08486\,21687\,68292\,75777\,90984\,34896\cdots
  \end{align*}
  \vspace*{-5mm}\begin{align*}
    \smash{\prod_{p\equiv4,7[9]}(1-p^{-2})^{-1}}
    =1.&02986\,05876\,77826\,18491\,88642\,35135\,21663\,16312\,01666\,87293
    \\&15881\,63094\,56123\,55333\,65628\,89969\,28513\,96515\,60005\,36245\cdots
  \end{align*}
  \vspace*{-5mm}\begin{align*}
    \smash{\prod_{p\equiv8[9]}(1-p^{-2})^{-1}}
    =1.&00442\,33235\,64550\,15978\,66082\,58390\,58205\,39661\,19672\,30788
    \\&17744\,79626\,23017\,18753\,96410\,76663\,34579\,95134\,16501\,66760\cdots
  \end{align*}
  }
\subsection*{Modulo 11}
{\footnotesize
  \begin{align*}
    \smash{\prod_{p\equiv1[11]}(1-p^{-2})^{-1}}
    =1.&00232\,82408\,97736\,52733\,78057\,92469\,42582\,04345\,78064\,14879
    \\&23124\,99895\,44150\,38255\,72926\,07516\,98484\,87460\,03110\,08712\cdots
  \end{align*}
  \vspace*{-5mm}\begin{align*}
    \smash{\prod_{p\equiv2,6,7,8[11]}(1-p^{-2})^{-1}}
    =1.&38240\,11448\,05788\,71773\,39824\,35954\,70441\,91351\,16435\,84157
    \\&13863\,06101\,70250\,01900\,59181\,34321\,25138\,72741\,06748\,64687\cdots
  \end{align*}
  \vspace*{-5mm}\begin{align*}
    \smash{\prod_{p\equiv3,4,5,9[11]}(1-p^{-2})^{-1}}
    =1.&17640\,19224\,41514\,71776\,56838\,81699\,54785\,03151\,42210\,45715
    \\&72819\,38133\,44304\,81040\,93008\,74341\,67383\,61950\,21979\,26318\cdots
  \end{align*}
  \vspace*{-5mm}\begin{align*}
    \smash{\prod_{p\equiv10[11]}(1-p^{-2})^{-1}}
    =1.&00079\,37707\,14740\,00680\,22327\,79981\,38075\,30993\,79972\,81556
    \\&86828\,01966\,59824\,89326\,65924\,56171\,20791\,11742\,28212\,98769\cdots
  \end{align*}
  }

\subsection*{Modulo 12}
{\footnotesize
  \begin{align*}
    \smash{\prod_{p\equiv1[12]}(1-p^{-2})^{-1}}
    =1.&00761\,32452\,14144\,96616\,93493\,12247\,73229\,37895\,47142\,90433
    \\&17666\,43368\,44819\,49208\,97861\,01855\,78530\,60579\,11129\,80649\cdots
  \end{align*}
  \vspace*{-5mm}\begin{align*}
    \smash{\prod_{p\equiv5[12]}(1-p^{-2})^{-1}}
    =1.&04820\,19036\,00769\,93683\,49374\,34895\,79267\,34804\,13674\,49481
    \\&52581\,07376\,14495\,24161\,71571\,43788\,23594\,04990\,88566\,94968\cdots
  \end{align*}
  \vspace*{-5mm}\begin{align*}
    \smash{\prod_{p\equiv7[12]}(1-p^{-2})^{-1}}
    =1.&02620\,21468\,31233\,70070\,72018\,66966\,36157\,23611\,09321\,31334
    \\&95148\,10400\,66496\,54603\,29393\,86454\,19299\,91782\,63867\,91609\cdots
  \end{align*}
  \vspace*{-5mm}\begin{align*}
    \smash{\prod_{p\equiv11[12]}(1-p^{-2})^{-1}}
    =1.&01177\,86368\,50332\,58370\,51194\,10267\,33127\,80584\,01230\,89520
    \\&87028\,35959\,40756\,15016\,41704\,56300\,54442\,19591\,32980\,62727\cdots
  \end{align*}
  }
\subsection*{Modulo 13}
{\footnotesize
  \begin{align*}
    \smash{\prod_{p\equiv1[13]}(1-p^{-2})^{-1}}
    =
    1.&00065\,68661\,98289\,66605\,74722\,84730\,77197\,91777\,00717\,07399
    \\&33554\,44837\,12988\,36602\,52536\,84343\,79642\,73590\,88077\,31673\cdots
  \end{align*}
   \begin{align*}
    \smash{\prod_{p\equiv2,6,7,11[5]}(1-p^{-2})^{-1}}
    =1.&38005\,21671\,19142\,93623\,73358\,95833\,59312\,88490\,63922\,76216
     \\&00813\,27801\,96170\,83570\,07037\,00666\,02382\,19997\,07055\,85939\cdots
  \end{align*}
  \vspace*{-5mm}\begin{align*}
    \smash{\prod_{p\equiv3,9[13]}(1-p^{-2})^{-1}}
    =1.&12706\,12738\,77030\,37596\,05291\,90459\,70008\,03562\,53668\,12081
    \\&48604\,51380\,13290\,89754\,69987\,12664\,24897\,64722\,52303\,29593\cdots
  \end{align*}
   \begin{align*}
    \smash{\prod_{p\equiv4,10[13]}(1-p^{-2})^{-1}}
    =1.&00628\,51383\,85264\,35654\,79220\,78630\,88874\,03212\,24553\,50607
     \\&59162\,40959\,77321\,01204\,89381\,53735\,74182\,12805\,59112\,51752\cdots
  \end{align*}
   \begin{align*}
    \smash{\prod_{p\equiv5,8[13]}(1-p^{-2})^{-1}}
    =1.&04384\,79529\,58163\,48325\,64453\,12135\,62867\,13038\,05109\,49630
     \\&56435\,71738\,46465\,77456\,29690\,71263\,29350\,03766\,17988\,29979\cdots
  \end{align*}
  \vspace*{-5mm}\begin{align*}
    \smash{\prod_{p\equiv12[13]}(1-p^{-2})^{-1}}
    =1.&00019\,47228\,43353\,09720\,12251\,29852\,70839\,19867\,65951\,93000
    \\&49665\,62593\,02690\,92410\,34974\,82067\,06364\,88262\,34074\,53639\cdots
  \end{align*}
  }
\subsection*{Modulo 15}
{\footnotesize
  \begin{align*}
    \smash{\prod_{p\equiv1[15]}(1-p^{-2})^{-1}}
    =1.&00148\,97422\,73492\,93695\,62022\,82152\,29804\,06202\,71822\,24183
    \\&85046\,92061\,06460\,33370\,47461\,16170\,34094\,66709\,13158\,03303\cdots
  \end{align*}
  \vspace*{-5mm}\begin{align*}
    \smash{\prod_{p\equiv2,8[15]}(1-p^{-2})^{-1}}
    =1.&34246\,04551\,54995\,30799\,30100\,63345\,72665\,24298\,78723\,72380
    \\&96524\,03928\,73058\,62457\,83670\,07480\,09151\,10334\,06933\,31380\cdots
  \end{align*}
  \vspace*{-5mm}\begin{align*}
    \smash{\prod_{p\equiv4[15]}(1-p^{-2})^{-1}}
    =1.&00317\,84700\,07976\,58539\,76886\,54009\,35749\,55893\,69169\,67588
    \\&37351\,26980\,45622\,46578\,84368\,96080\,28447\,94669\,19055\,69351\cdots
  \end{align*}
  \vspace*{-5mm}\begin{align*}
    \smash{\prod_{p\equiv7,13[15]}(1-p^{-2})^{-1}}
    =1.&02920\,54524\,88970\,30487\,46169\,68199\,34620\,53972\,85734\,20801
    \\&87576\,81344\,73863\,39397\,51683\,30560\,76995\,20714\,09590\,99521\cdots
  \end{align*}
  \vspace*{-5mm}\begin{align*}
    \smash{\prod_{p\equiv11[15]}(1-p^{-2})^{-1}}
    =1.&00941\,13977\,70415\,34074\,11140\,07967\,71715\,31828\,38502\,83487
    \\&41065\,68439\,10926\,98429\,51008\,47969\,06005\,15885\,02338\,55701\cdots
  \end{align*}
  \vspace*{-5mm}\begin{align*}
    \smash{\prod_{p\equiv14[15]}(1-p^{-2})^{-1}}
    =1.&00177\,62082\,89544\,73626\,10915\,43079\,96283\,15610\,57061\,98467
    \\&19519\,14691\,39870\,02036\,75682\,26376\,90944\,75824\,69831\,96091\cdots
  \end{align*}
  }
\subsection*{Modulo 16}
{\footnotesize
  \begin{align*}
    \smash{\prod_{p\equiv1[16]}(1-p^{-2})^{-1}}
    =1.&00378\,12963\,11174\,37714\,94711\,72280\,61816\,45658\,26785\,28441
    \\&57268\,63521\,48911\,54134\,99502\,87194\,19254\,71100\,10645\,46873\cdots
  \end{align*}
  \vspace*{-5mm}\begin{align*}
    \smash{\prod_{p\equiv3,11[16]}(1-p^{-2})^{-1}}
    =1.&13941\,87771\,08211\,51502\,70589\,30773\,34020\,88725\,59961\,09629
    \\&48302\,25821\,27411\,02101\,65577\,60742\,91446\,59374\,91512\,33349\cdots
  \end{align*}
  \vspace*{-5mm}\begin{align*}
    \smash{\prod_{p\equiv5,13[16]}(1-p^{-2})^{-1}}
    =1.&05109\,99849\,42183\,30793\,68775\,56006\,33505\,68012\,01018\,45817
    \\&85080\,59912\,94207\,39729\,30485\,58783\,38889\,50479\,59255\,34495\cdots
  \end{align*}
  \vspace*{-5mm}\begin{align*}
    \smash{\prod_{p\equiv7[16]}(1-p^{-2})^{-1}}
    =1.&02325\,48781\,97407\,08067\,95776\,68614\,06977\,00372\,89157\,54600
    \\&19844\,97929\,83355\,91253\,99909\,55714\,70317\,40567\,85934\,05044\cdots
  \end{align*}
  \vspace*{-5mm}\begin{align*}
    \smash{\prod_{p\equiv9[16]}(1-p^{-2})^{-1}}
    =1.&00104\,97991\,21471\,31637\,83963\,95210\,10070\,68052\,00181\,57035
    \\&98663\,81304\,47589\,89310\,55217\,86340\,51978\,44383\,63621\,58656\cdots
  \end{align*}
  \vspace*{-5mm}\begin{align*}
    \smash{\prod_{p\equiv15[16]}(1-p^{-2})^{-1}}
    =1.&00185\,24179\,73996\,13159\,93578\,02219\,51678\,26622\,68517\,41444
    \\&99996\,30754\,09303\,19958\,16127\,21985\,97936\,04820\,77136\,34947\cdots
  \end{align*}
  }

\subsection*{Some notes on timing}

We tried several large computations to get an idea of the limitations
of our script, with the uniform choice $P=300$ and asking for
100~decimal digits. Since we did not run each computations hundred
times to get an average timing, this table has to be taken with a pinch
of salt. We present relative timing, knowing that the computation with
$q=3$, $q=4$ or $q=4$ took about a tenth of a second.
\vspace*{3mm}

\noindent{\footnotesize
\begin{tabular}{|r||r|r|r|r||r|}
  \hline
  $q$&$\varphi(q)$&$\#d_i's$&$|G^\sharp|$&$r$&\parbox{30pt}{relative\\time(ms)}\\
  \hline
  3& 2& 5& 2& 5& 1 \\
4& 2& 5& 2& 5& 1 \\
5& 4& 19& 3& 5& 1 \\
7& 6& 28& 4& 5& 3.2 \\
8& 4& 5& 4& 5& 2.2 \\
9& 6& 28& 4& 5& 3.2 \\
11& 10& 15& 4& 5& 30 \\
12& 4& 5& 4& 5& 2.5 \\
13& 12& 55& 6& 5& 4.4 \\
15& 8& 19& 6& 5& 2 \\
16& 8& 19& 6& 5& 1.6 \\
17& 16& 30& 5& 5& 42 \\
19& 18& 34& 6& 5& 93 \\
20& 8& 19& 6& 5& 2 \\
21& 12& 28& 8& 5& 7 \\
23& 22& 9& 4& 5& 100 \\
24& 8& 5& 8& 5& 5 \\
25& 20& 32& 6& 5& 98 \\
27& 18& 34& 6& 5& 92 \\
28& 12& 28& 8& 5& 6.5 \\
29& 28& 31& 6& 5& 175 \\
31& 30& 54& 8& 5& 343 \\
32& 16& 27& 8& 5& 25 \\
33& 20& 15& 8& 5& 65 \\
35& 24& 55& 12& 5& 96 \\
36& 12& 28& 8& 5& 6.5 \\
37& 36& 61& 9& 5& 350 \\
39& 24& 55& 12& 5& 99 \\
40& 16& 19& 12& 5& 4.6 \\
41& 40& 40& 8& 5& 424 \\
43& 42& 40& 8& 5& 654 \\
44& 20& 15& 8& 5& 652 \\
45& 24& 55& 12& 5& 95 \\
47& 46& 6& 4& 5& 394 \\
48& 16& 19& 12& 5& 4.8 \\
49& 42& 40& 8& 5& 665 \\
  51& 32& 30& 10& 5& 101 \\
  \hline
\end{tabular}\quad
\begin{tabular}{|r||r|r|r|r||r|}
  \hline
  $q$&$\varphi(q)$&$\#d_i's$&$|G^\sharp|$&$r$&\parbox{30pt}{relative\\time(ms)}\\
  \hline
52& 24& 55& 12& 5& 102 \\
53& 52& 23& 6& 5& 675 \\
55& 40& 32& 12& 5& 250 \\
56& 24& 28& 16& 5& 15 \\
57& 36& 34& 12& 5& 222 \\
59& 58& 6& 4& 5& 675 \\
60& 16& 19& 12& 5& 5 \\
61& 60& 84& 12& 5& 1468 \\
63& 36& 28& 20& 5& 23 \\
64& 32& 30& 10& 5& 96 \\
65& 48& 55& 20& 5& 260 \\
67& 66& 32& 8& 5& 155 \\
68& 32& 30& 10& 5& 97 \\
69& 44& 9& 8& 5& 240 \\
71& 70& 24& 8& 5& 1850 \\
72& 24& 28& 16& 5& 15 \\
73& 72& 72& 12& 5& 1643 \\
75& 40& 32& 12& 5& 237 \\
76& 36& 34& 12& 5& 219 \\
77& 60& 54& 16& 5& 855 \\
79& 78& 32& 8& 5& 2312 \\
80& 32& 19& 20& 5& 12 \\
81& 54& 35& 8& 5& 871 \\
83& 82& 5& 4& 5& 1527 \\
84& 24& 28& 16& 5& 15 \\
85& 64& 30& 18& 5& 257 \\
87& 56& 31& 12& 5& 441 \\
88& 40& 15& 16& 5& 157 \\
89& 88& 31& 8& 5& 2058 \\
91& 72& 55& 30& 5& 464 \\
92& 44& 9& 8& 5& 241 \\
93& 60& 54& 16& 5& 866 \\
95& 72& 61& 18& 5& 915 \\
96& 32& 27& 16& 5& 61 \\
97& 96& 70& 12& 5& 3371 \\
99& 60& 54& 16& 5& 855 \\
100& 40& 32& 12& 5& 236 \\
  \hline
\end{tabular}
}
\vspace*{3mm}

\noindent
This table shows that the value of $\varphi(q)$ is the main
determinant of the time needed. The column with the tag ``$\#d_i's$"
contains the number of tuples $(d_1,\cdots, d_v)$ in the main formula.

\noindent\parbox[t]{0.4\linewidth}{~\par\indent Here is now a shorter table when asking 1000 decimal digits still with
$P=300$. The time needed is still very decent.}
\qquad{\footnotesize
\begin{tabular}[t]{|r||r|r|r|r||r|}
  \hline
  $q$&$\varphi(q)$&$\#d_i's$&$|G^\sharp|$&$r$&time(ms)\\
  \hline
  3& 2& 8& 2& 8& 3708  \\
4& 2& 8& 2& 8& 3226  \\
5& 4& 87& 3& 8& 7067  \\
7& 6& 249& 4& 8& 29421  \\
8& 4& 8& 4& 8& 6423  \\
9& 6& 249& 4& 8& 29267  \\
11& 10& 96& 4& 8& 56001  \\
12& 4& 8& 4& 8& 7264  \\
13& 12& 716& 6& 8& 87480  \\
15& 8& 87& 6& 8& 14021  \\
  \hline
\end{tabular}
}

\vspace*{3mm}
When asking for 5000 decimal digits and only $q=3$, it took about 16
minutes (with $P=500$) to get an answer,
which essentially sets the horizon of the present method.

\bibliographystyle{plain}

\begin{thebibliography}{10}

\bibitem{Finch*03}
Steven~R. Finch.
\newblock {\em Mathematical constants}, volume~94 of {\em Encyclopedia of
  Mathematics and its Applications}.
\newblock Cambridge University Press, Cambridge, 2003.

\bibitem{Fouvry-Levesque-Waldschmidt*18}
\'{E}tienne Fouvry, Claude Levesque, and Michel Waldschmidt.
\newblock Representation of integers by cyclotomic binary forms.
\newblock {\em Acta Arith.}, 184(1):67--86, 2018.

\bibitem{Gourdon-Sebah*10}
X.~Gourdon and P.~Sebah.
\newblock Constants from number theory.
\newblock {\em
  \url{http://numbers.computation.free.fr/Constants/constants.html}}, 2010.
\newblock
  \url{http://numbers.computation.free.fr/Constants/Miscellaneous/constantsNumTheory.ps}.

\bibitem{Ireland-Rosen*90}
Kenneth Ireland and Michael Rosen.
\newblock {\em A classical introduction to modern number theory}, volume~84 of
  {\em Graduate Texts in Mathematics}.
\newblock Springer-Verlag, New York, second edition, 1990.

\bibitem{Lal*67}
M.~Lal.
\newblock Primes of the form {$n^{4}+1$}.
\newblock {\em Math. Comp.}, 21:245--247, 1967.

\bibitem{Moree*00}
P.~Moree.
\newblock Approximation of singular series constant and automata. with an
  appendix by gerhard niklasch.
\newblock {\em Manuscripta Matematica}, 101(3):385--399, 2000.

\bibitem{Moree-Osburn*06}
P.~Moree and R.~Osburn.
\newblock Two-dimensional lattices with few distances.
\newblock {\em Enseign. Math. (2)}, 52(3-4):361--380.

\bibitem{Moree*04b}
Pieter Moree.
\newblock On the average number of elements in a finite field with order or
  index in a prescribed residue class.
\newblock {\em Finite Fields Appl.}, 10(3):438--463, 2004.

\bibitem{Niven-Zuckerman-Montgonemry*91}
Ivan Niven, Herbert~S. Zuckerman, and Hugh~L. Montgomery.
\newblock {\em An introduction to the theory of numbers}.
\newblock John Wiley \& Sons, Inc., New York, fifth edition, 1991.

\bibitem{OEIS}
{OEIS Foundation Inc.}
\newblock {\em The On-Line Encyclopedia of Integer Sequence}, 2019.
\newblock \url{http://oeis.org/}.

\bibitem{Rota*64a}
Gian-Carlo Rota.
\newblock On the foundations of combinatorial theory. {I}. {T}heory of
  {M}\"{o}bius functions.
\newblock {\em Z. Wahrscheinlichkeitstheorie und Verw. Gebiete}, 2:340--368
  (1964), 1964.

\bibitem{Serre*70}
Jean-Pierre Serre.
\newblock {\em Cours d'arithm\'{e}tique}, volume~2 of {\em Collection SUP: ``Le
  Math\'{e}maticien''}.
\newblock Presses Universitaires de France, Paris, 1970.

\bibitem{Shanks*64}
D.~Shanks.
\newblock On maximal gaps between successive primes.
\newblock {\em Math. Comp.}, 18:646--651, 1964.

\bibitem{Shanks*64b}
D.~Shanks.
\newblock The second-order term in the asymptotic expansion of {$B(x)$}.
\newblock {\em Math. Comp.}, 18:75--86, 1964.

\bibitem{Shanks*60}
Daniel Shanks.
\newblock On the conjecture of {H}ardy \& {L}ittlewood concerning the number of
  primes of the form {$n^{2}+a$}.
\newblock {\em Math. Comp.}, 14:320--332, 1960.

\bibitem{Shanks*61}
Daniel Shanks.
\newblock On numbers of the form {$n^{4}+1$}.
\newblock {\em Math. Comput.}, 15:186--189, 1961.

\bibitem{Shanks*67}
Daniel Shanks.
\newblock Lal's constant and generalizations.
\newblock {\em Math. Comp.}, 21:705--707, 1967.

\bibitem{Witt*37}
E.~Witt.
\newblock Treue {D}arstellung {L}iescher {R}inge.
\newblock {\em J. Reine Angew. Math.}, 177:152--160, 1937.

\end{thebibliography}

\end{document}